\DeclareMathOperator*{\esssup}{\mathrm{ess\,sup}}
\newcommand{\dist}{\mbox{dist}}
\newcommand\delc[1]{}
\newcommand\comcd[1]{}
\newcommand\del[1]{}
\newcommand\deln[1]{}
\newcommand\delr[1]{}
\newcommand\comad[1]{}
\newcommand\comr[1]{{\color{red} #1}}
\newcommand\Greendel[1]{}
\newcommand\old[1]{}
\numberwithin{equation}{section}
\newcommand{\embed}{\hookrightarrow}
\newcommand{\lb}{\langle}
\newcommand{\rb}{\rangle}
\newcommand\ex[1]{\operatorname{e}^{#1}}
\newcommand{\then}{\Longrightarrow}
\def\E{{\mathbb E}\,}
\def\ds{\displaystyle}
\def\vs{\vspace{.1cm}}
\newcommand{\rA}{\mathrm{A}}
\newcommand{\rB}{\mathrm{B}}
\newcommand{\rH}{\mathrm{ H}}
\newcommand{\rK}{\mathrm{K}}
\newcommand{\rV}{\mathrm{ V}}
\newcommand{\rU}{\mathrm{ U}}
\newcommand{\cH}{\mathcal{ H}}
\newcommand\todown{\searrow}
\newcommand{\eps}{\varepsilon}
\renewcommand{\a}{\alpha}
\renewcommand{\d}{\delta}
\def\d{{\,\rm d}}
\def\old#1{}
\def\text#1{{\rm #1}}
\def\newold#1{}
\def\divv{{\rm div}\,}
\def\vp{\varphi}
\theoremstyle{plain}
\newtheorem{theorem}{Theorem}[section]
\theoremstyle{remark}
\newtheorem{remark}[theorem]{Remark}
\theoremstyle{plain}
\newtheorem{lemma}[theorem]{Lemma}
\newtheorem{proposition}[theorem]{Proposition}
\newtheorem{definition}[theorem]{Definition}
\newtheorem{assumption}[theorem]{Assumption}
\numberwithin{equation}{section}
\begin{document}

\baselineskip 12pt

\title[LDP for invariant measures of 2D SNSE]
{Large deviations principle for the invariant measures of the 2D stochastic Navier-Stokes equations on a torus }
\author{Z. Brze{\'z}niak}
\address{Department of Mathematics\\
The University of York\\
Heslington, York YO10 5DD, UK} \email{zdzislaw.brzezniak@york.ac.uk}
\author{S. Cerrai}
\address{Department of Mathematics\\
University of Maryland\\
College Park, MD, 20742,  USA} \email{cerrai@math.umd.edu}
\date{\today}




\begin{abstract}
We prove here the validity of a large deviation principle for the family of invariant measures associated to a two dimensional Navier-Stokes equation on a torus, perturbed by a smooth additive noise.
\end{abstract}

\maketitle \tableofcontents
\section{Introduction}\label{sec-intro}
In the present paper we are dealing with  2-D   Navier Stokes equations with periodic boundary conditions, perturbed by a small additive noise. These boundary conditions are usually realized by considering the problem on a two-dimensional torus $\mathbb{T}^2$, see Section \ref{sec-prel} for more details. To fix readers attention, let us write down these equations in a functional form, as
\begin{equation}
\label{eqn-SNSE-intro}
du(t)+\rA u(t)\, dt+\rB(u(t),u(t))\, dt=\sqrt{\eps}\,\,dw(t),\ \ \ \ u(0)=u_0,
\end{equation}
for $0<\eps<<1$.

Full definitions of the symbols involved can be found later in Section \ref{sec-prel}, but, for the time being, let us recall that $\rA$ is the Stokes operator, equal, roughly speaking, to the Laplace operator (acting on vector fields) composed with the Leray-Helmoltz projection $P$, defined  on the space of zero mean and  square integrable vector fields with values in  the subspace $\rH$ of  divergence free vector fields, the convection $\rB(u,u)$ is equal to $P (u\nabla u)$,  $w(t)$ is a $\rK$-cylindrical Wiener process, for $\rK=D(\rA^{\frac\alpha2})$ with $\alpha>1$, and $u_0 \in \rH$. Of course, because $P$ nullifies the gradients, the gradient of the pressure term $\nabla p$ disappears in such a formulation. Basic questions about such a problem are now well understood, and we simply refer to the papers \cite{Flandoli_1994} and \cite{Brz+Li_2006}  and to the chapter 15 of the monograph \cite{DaPrato+Z_1996}.

It is know that, for every fixed $\eps>0$, the Markov process on $\rH$ generated by equation \eqref{eqn-SNSE-intro} has an invariant measure $\mu_\eps$ (see \cite{Flandoli_1994}), which is also unique and ergodic (see \cite{Ferrario_1999} and also \cite{Hairer+Mattingly_2006}).
The objective of our paper is study of the validity of a large deviation principle (LDP) for the family of invariant measures $\{\mu_\eps\}_{\eps>0}$. To be more precise, our purpose is to  show   that the family of probability measure$\big\{\mu_\eps \big\}_{\eps >0}$ satisfies a LDP, as $\eps\downarrow 0$, with rate $\eps$ and action functional  equal to the quasi-potential $\rU$ associated to the  controlled deterministic NSE, also known as  the skeleton equation,
\begin{equation}
\label{eqn-SNSE-control}
u^\prime(t)+\rA u(t)+\rB(u(t),u(t))=f(t),\ \ \ \ u(0)=u_0,
\end{equation}
where $f\in L^2(0,\infty;D(\rA^{\frac\alpha2}))$. The quasi-potential $\rU(v)$, for $v \in \rH$, can be defined as the infimum, over all $T>0$,  of the energy of the control $f$, with respect to the norm of the reproducing kernel Hilbert space $\rK$ of the law $\mathcal{L}(W(1))$, i.e.
\[\frac12 \int_0^T \vert \rA^{\frac\alpha2}  f(t)\vert_{\rH}^2,\]
 such that the solution $u$ to the skeleton equation \eqref{eqn-SNSE-control}, with initial data $u(0)=0$, reaches the  state $v$ at time $T$, i.e. $u(T)=v$. To this purpose, we refer to equation \eqref{eqn-B4-QP} for a version of the definition of $\rU$ using both positive and negative times and \eqref{eqn-A1} for a representation of $\rU$ using the skeleton equation over the negative half-line $(-\infty,0]$.

The quasi-potential $\rU$ was an important object in our recent study \cite{BCF_2013} with M. Freidlin and in some sense our current paper is a natural continuation of that work. The two other works on which we depend a lot in our investigation is the paper \cite{Cerrai+Roeckner_2005} by the second named authour and M. R\"ockner and \cite{sowers} in which a similar question was investigated for reaction diffusion equation with polynomially bounded, resp. bounded, reaction term.

Let us make an important comment about the assumption $\alpha>1$. In fact,  the Markov process on $\rH$, generated by problem \eqref{eqn-SNSE-intro}, for both periodic and Dirichlet boundary conditions,   has a unique invariant measure  $\mu_\eps$ for $\alpha>0$, (to this purpose, see   \cite[Corollary 9.1 and Remark 4.1 (c)]{Brz+Li_2006}). However,  an essential tool in proving the LDP is given by  the  exponential estimates for the invariant measures and we have  been able to prove them only in the case of periodic boundary conditions and  $\alpha >1$ ( see Theorem \ref{thm-exponential}). As a matter of fact, we do not know if, even in the case of periodic boundary conditions, such exponential estiamates are true without assuming that the covariance of the noise is a trace-class operator. This is the reason why, already from the very beginning, we assume that our problem is posed on a 2-D torus and that $\alpha >1$.

Let us conclude this introduction by briefly describing the content of our paper. Section \ref{sec-prel} is devoted to presenting basic notation and preliminaries.  We try to explain the differences and similarities between the NSES with periodic and Dirichlet boundary conditions which lead us to consider only the latter case. In particular, we  prove some estimates concerning the nonlinearity $B$ with respect to norm in different fractional domains of the Stokes operator $\rA$, see Propositions
\ref{prop-Leray-fractional-alpha}  and \ref{prop-Leray-fractional-alpha2}.

In  Section \ref{sec-skeleton} we discuss the  skeleton equation and, in addition to recalling some fundamental and useful results (also from our previous work \cite{BCF_2013}), we also discuss their generalizations to the general  case $\alpha>0$, valid however only for the case of NSEs on a 2-D torus.

In Section \ref{sec-action} we introduce the action functional, for the large deviation principle in $C([0,T];\rH)$ associated with the family of solutions $\{u_\eps\}_{\eps>0}$ of equation \eqref{eqn-SNSE-intro}, and the corresponding quasi-potential. We formulate generalizations of the corresponding results from \cite{BCF_2013}  to the general  case $\alpha>0$, again, valid only for the case of NSEs on a 2-D torus. Moreover, we state our main result, i.e. Theorem \ref{thm-main}, about the LDP for the family of invariant measures $\{\mu_\eps\}_{\eps>0}$ for the stochastic NSEs on a 2-D torus.
The remainder of the paper is devoted to the proof of that result.

So, in
Section \ref{sec-exponentail} we formulate and prove Theorem \ref{thm-exponential} about exponential estimates for the family of probability measures $\big\{\mu_\eps \big\}_{\eps >0}$. This result is based on  the uniqueness and ergodicity of each invariant measure $\mu_\eps$. The basic ingredient in this proof is also Lemma \ref{lem-exponential}, about uniform exponential estimates for the solutions $u_\eps$ of equation \eqref{eqn-SNSE-intro}. Our proof is a simplification (and clarification) of a proof of a more general result from \cite{Goldys+Maslowski_2005}. However, we should note that another proof of such a result is possible, which is based on an earlier paper \cite{Brz+Peszat_2000} by the first named authour and Peszat, see Appendix \ref{app-exponential estimates}. In both Section \ref{sec-exponentail} and Appendix \ref{app-exponential estimates} we have made some comments about the  corresponding results for stochastic NSEs with multiplicative noise. However, a study of the corresponding LDP is postponed till another publication.

Let us continue with the description of the content of our paper. In Section \ref{sec-lower} we continue with the proof of Theorem \ref{thm-main} and show that the invariant measures $\mu_\eps$ satisfy an appropriate lower bounds, see Theorem \ref{thm-lower}. In inequality \eqref{cb60} we already see the relationship between the invariant measures $\mu_\eps$ and the quasi-potential $\rU$.

Sections  \ref{sec-upper1} and \ref{sec-upper2} are devoted to the formulation and proof of appropriate lower bounds satisfied by the invariant measures $\mu_\eps$, see Theorem \ref{thm-upper}.

The paper is concluded with two appendices. The first one of them, as already mentioned, is devoted to an alternative proof of Lemma \ref{lem-exponential}. The second one is devoted to a proof of precise behavior for large negative time  of solutions to the skeleton equation \eqref{eqn-SNSE-control} on the negative half-line $(-\infty,0]$.

\subsection*{Acknowledgments}
The first named author would like to thanks Department of Mathematics,
University of Maryland for it's hospitality during his visit in September 2013 during which this project was initiated.  Research of the second named author was partly supported by NSF grant DMS-1407615. Talks on   preliminary versions of the results from this paper were given by the first named authour at workshops at Lyon, Krak{\'o}w and Loughborough. After, one of them we were informed by A. Shirkyan about a paper \cite{Martirosyan_2015} by D. Martirosyan who studies LDP for invariant measure for stochastic wave equations.

\bigskip

\section{Notation and preliminaries}\label{sec-prel}

Our main results are formulated for the stochastic Navier-Stokes equations with periodic boundary conditions. Hence we
begin with a brief introduction to the relevant notation in this case; all the mathematical background can
be found in the small book \cite{Temam_1983} by Temam.  Here we will not recall the  notation  in the case of  the Dirichlet boundary conditions but only refer the reader to our earlier
paper \cite{BCF_2013}. Some of our results are true also in this case. Proper generalization to  this case, as well to the case of multiplicative noise, will be a subject of a forthcoming publication.

We denote here by  $\mathbb{T}^2$ the two dimensional torus of fixed dimensions $L\times L$. The space $\rH$ is equal to
\[\rH=\{ u\in L_0^2(\mathbb{T}^2,\mathbb{R}^2): \divv (u)=0 \mbox{ and } \gamma_\nu(u)_{\vert \Gamma_{j+2}}=-\gamma_\nu(u)_{\vert \Gamma_{j}}, \; j=1,2\}, \]
where $L_0^2(\mathbb{T}^2,\mathbb{R}^2)$ is the Hilbert space consisting of those $u\in L^2(\mathbb{T}^2,\mathbb{R}^2)$ which satisfy the condition
\begin{equation}\label{eqn-int=0}
\int_{\mathbb{T}^2} u(x)\, dx=0,
\end{equation}
$\gamma_\nu$ is the bounded linear map defined on divergence free vectors in $L^2(\mathbb{T}^2,\mathbb{R}^2)$ with values in the dual space of $H^{\frac 12}(\partial \mathbb{T}^2)$ (the image in $L^2(\partial \mathbb{T}^2)$ of the trace operator $H^1( \mathbb{T}^2)\to L^2(\partial \mathbb{T}^2)$), such that $\gamma_\nu(u)$ coincides with the restriction of $u\cdot \nu$ to $\partial \mathbb{T}^2$, if $u \in\,\overline{D(\mathbb{T}^2)}$, and
$\Gamma_j$, $j=1,\cdots,4$ are the four (not disjoint) parts of the boundary of $\partial(\mathbb{T}^2)$ defined by, for $ j=1,2$,
\[
\Gamma_j=\{ x=(x_1,x_2) \in [0,L]^2: x_j=0\},\;\Gamma_{j+2}=\{ x=(x_1,x_2) \in [0,L]^2: x_j=L\}.
\]

We also define the vorticity space $\rV$ by setting
\begin{equation}\label{eqn-V space}
\left.
\begin{array}{rcl}
\rV&=&\Bigl\{ u\in \rH : D_ju \in L^2(\mathbb{T}^2,\mathbb{R}^2),\ u_{\vert \Gamma_{j+2}}\circ \tau_j=u_{\vert \Gamma_{j}}, \; j=1,2\Bigr\},
\end{array}
\right.
\end{equation}
 where $D_j$, $j=1,2$, are  the 1st order weak derivatives in the interior of the torus.

Because of  condition \eqref{eqn-int=0},   the norm on the space $\rV$ induced by the norm from the Sobolev spaces $\mathbb{H}^{1,2}$
is equivalent to the following one
\[
\big(u,v\big)_\rV=\sum_{i,j=1}^{2}\int_{\mathcal{O}}{\frac{\partial
u_{j}}{\partial x_{i}}\frac{\partial v_{j}}{\partial
x_{i}}}\,{d}x,\; u,v\in\rV.
\]

The Stokes operator $\rA$ can be defined in a natural way as
\begin{equation}
\label{def-A} \left\{
\begin{array}{ll}
D(\mathrm{A}) &= \rV \cap H^{2,2}(\mathbb{T}^2,\mathbb{R}^2)
\\
&\vspace{.1cm} \\
\mathrm{A}u&=-P \Delta , \, u \in
D(\mathrm{A}),
\end{array}
\right.
\end{equation}
where \[
\mathrm{P}:\mathbb{L}^2(\mathcal{O}) \rightarrow \mathrm{H}\]
is  the orthogonal projection,  called usually the Leray-Helmholtz projection.

It is well known that $A$ is a self-adjoint positive operator in $\rH$. In fact, its eigenvectors and eigenvalue can be explicitly found.
In particular, $A$ has bounded imaginary powers and thus by \cite[Remark 2 in 1.15.2]{Triebel_1978}, the domains of the fractional powers of $A$ are equal (with equivalent norms) to the complex interpolation spaces between $D(\rA)$ and $\rH$, i.e.
\begin{equation}
\label{eqn-fractional}
D(\mathrm{A}^\theta) = [\rH, D(\mathrm{A})]_\theta, \;\; \theta \in (0,1).
\end{equation}
This means that
\begin{eqnarray}\label{eqn-fractional domains}
D(\mathrm{A}^\theta)&=& \rH  \cap H^{2\theta,2}(\mathbb{T}^2,\mathbb{R}^2).
\end{eqnarray}
Moreover, it is well known,  see for instance    \cite[p. 57]{Temam_1997}, that $\mathrm{V}=D(\mathrm{A}^{1/2})$.

It follows from the above that, contrary to the Dirichlet boundary case, for every $\alpha\geq 0$ the Leray-Helmholtz projection
\begin{equation}\label{eqn-Leray-Helmholtz}
 P: H^{\alpha,2}(\mathbb{T}^2,\mathbb{R}^2) \to D(\rA^\alpha)
\end{equation}
  is a bounded linear map. To this purpose, compare with    \cite[Proposition 2.1]{BCF_2013} in the bounded domain case.

  The Stokes operator $\rA$ satisfies all the properties known in the bounded domain case, inclusive the strict positivity property
  \[
  \lb \rA u, u\rb_{\rH} \geq \lambda_1 \vert u \vert_{\rH},
  \]
with $\lambda_1=\frac{4\pi^2}{L^2}$.

Now, consider the trilinear form $b$ on $V\times V\times V$ given by
\[
b(u,v,w)=\sum_{i,j=1}^{2}\int_{\mathcal{O}}u_{i}{\frac{\partial v_{j}%
}{\partial x_{i}}}w_{j}\,\,{d}x,\quad u,v,w\in \rV.
\]
It is known that  $b$ is a continuous trilinear form such that
\begin{equation}
\label{eqn:b01}
b(u,v,w)=-b(u,w,v),
\quad  \, u\in \mathrm{V}, v, w\in
\mathbb{H}_0^{1}(\mathcal{O}),
\end{equation}
and, for some constant $c>0$ (see  for instance \cite[Lemma 1.3, p.163]{Temam_2001}  and  \cite{Temam_1997}),

\begin{equation}
\begin{aligned}
\label{eqn:4.0a}
 \vert b(u,v,w)  \vert \leq c\left\{
\begin{array}{ll}
 \vert u\vert_\rH ^{1/2}\vert \nabla u\vert_\rH^{1/2}\vert \nabla  v\vert _\rH^{1/2}\vert \mathrm{A}v\vert_\rH ^{1/2}\vert w\vert_\rH  &
\quad \, u \in \mathrm{V}, v\in D(\mathrm{A}), w\in \mathrm{H}\\
\vert u\vert_\rH ^{1/2}\vert \mathrm{A}u\vert_\rH ^{1/2}\vert \nabla  v\vert _\rH\vert w\vert_\rH & \quad
 u \in D(\mathrm{A}), v\in \mathrm{V}, w\in \mathrm{H}\\
\vert u\vert_\rH \vert \nabla  v\vert_\rH \vert w\vert_\rH ^{1/2}\vert \mathrm{A}w\vert _\rH^{1/2} & \quad
 u \in \mathrm{H}, v\in \mathrm{V}, w\in D(\mathrm{A})\\
\vert u\vert _\rH^{1/2}\vert \nabla  u\vert_\rH ^{1/2}\vert \nabla  v\vert_\rH \vert w\vert_\rH ^{1/2}\vert \nabla  w\vert_\rH ^{1/2} & \quad
 u, v, w \in \mathrm{V}.
\end{array}
\right.
\end{aligned}
\end{equation}

Next, define  the bilinear map $B:\rV\times \rV\rightarrow \rV^{\prime}$, by setting
\[
\left\langle B(u,v),w\right\rangle =b(u,v,w),\quad u,v,w\in \rV,
\]
and the homogenous polynomial of second degree $B:\rV \rightarrow \rV^{\prime}$ by

\[
B(u)=B(u,u),\; u\in \rV.
\]
From the first inequality in \eqref{eqn:4.0a}, we have that
if $v \in\,D(\rA)$, then $B(u,v) \in\,H$ and the following inequality  follows directly \begin{equation}
\label{ineq-B01}
\vert \rB(u,v) \vert_\rH^2 \leq C  \vert u\vert_\rH\vert \nabla u\vert_\rH\vert \nabla  v\vert_\rH\vert \mathrm{A}v\vert_\rH, \; u\in \rV, \, v\in D(\rA).
\end{equation}
Moreover, the following identity is a direct consequence of \eqref{eqn:b01}.
\begin{equation}
\label{eqn:b02}
\lb  \rB(u,v),v \rb =0,\;\;  u,v\in \rV.
\end{equation}

Furthermore,  we have  the following property involving the nonlinear term $B$ and the Stokes operator $\rA$
 \begin{eqnarray}\label{eqn-A-B}
\left<\rA u,B(u,u)\right>_\rH=0, \;\; u\in D(\rA),
\end{eqnarray}
see \cite[Lemma 3.1]{Temam_1983} for a proof.

Let us also recall the following facts (see \cite[Lemma 4.2]{Brz+Li_2006} and \cite{Temam_2001}).

\begin{lemma}\label{lem:form-b} The trilinear map
$b:\mathrm{V}\times \mathrm{V} \times \mathrm{V} \to \mathbb{R}$
has a unique extension to a bounded trilinear map from
$\mathbb{L}^4(\mathcal{O}) \times (\mathbb{L}^4(\mathcal{O})
\cap\mathrm{H})\times \mathrm{V}$ and from
$\mathbb{L}^4(\mathcal{O}) \times \mathrm{V}\times
\mathbb{L}^4(\mathcal{O})$ into $\mathbb{R}$. Moreover, $B$ maps
$\mathbb{L}^4(\mathcal{O})\cap\mathrm{H}$ (and so $\mathrm{V}$)
into $\mathrm{V}^\prime$ and
\begin{equation}\label{eqn:4.0}
\vert B(u) \vert_{\mathrm{V}^\prime} \leq C_1\vert u
\vert^2_{\mathbb{L}^4(\mathcal{O})} \leq 2^{1/2}C_1 \vert u \vert_\rH  \vert
\nabla  u \vert _{\mathbb{L}^2(\mathcal{O})}\leq C_2 \vert u \vert_\rH \vert u \vert_\rV \leq  C_3 \vert u \vert_\rV^2 , \quad
 u \in \mathrm{V}.
\end{equation}
\end{lemma}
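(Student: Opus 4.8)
The plan is to reduce every assertion to the single elementary estimate obtained by applying Hölder's inequality to the defining integral with the exponents $(4,2,4)$, namely
\begin{equation*}
\vert b(u,v,w)\vert=\Bigl\vert\sum_{i,j=1}^2\int_{\mathcal{O}}u_i\,\frac{\partial v_j}{\partial x_i}\,w_j\,dx\Bigr\vert\le \vert u\vert_{\mathbb{L}^4(\mathcal{O})}\,\vert\nabla v\vert_{\mathbb{L}^2(\mathcal{O})}\,\vert w\vert_{\mathbb{L}^4(\mathcal{O})},
\end{equation*}
valid whenever the middle factor carries the (weak) derivative. First I would record that, by the two-dimensional Sobolev embedding $\rV=D(\rA^{1/2})\subset H^{1,2}(\mathbb{T}^2,\mathbb{R}^2)\embed\mathbb{L}^4(\mathcal{O})$, every $u\in\rV$ belongs to $\mathbb{L}^4(\mathcal{O})$, so that the right-hand side above is finite on $\rV\times\rV\times\rV$ and, more importantly, the displayed inequality already exhibits the continuity we want in the topology of $\mathbb{L}^4(\mathcal{O})\times\rV\times\mathbb{L}^4(\mathcal{O})$.

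For the two extensions I would proceed as follows. In the configuration $\mathbb{L}^4(\mathcal{O})\times\rV\times\mathbb{L}^4(\mathcal{O})$ the derivative already sits on the middle ($\rV$) factor, so the estimate above gives boundedness directly and the extension is the same integral formula. In the configuration $\mathbb{L}^4(\mathcal{O})\times(\mathbb{L}^4(\mathcal{O})\cap\rH)\times\rV$ the middle factor is no longer differentiable; here I would use the antisymmetry \eqref{eqn:b01}, $b(u,v,w)=-b(u,w,v)$, to transfer the derivative onto the third factor $w\in\rV$, after which the same Hölder bound yields
\begin{equation*}
\vert b(u,v,w)\vert=\vert b(u,w,v)\vert\le \vert u\vert_{\mathbb{L}^4(\mathcal{O})}\,\vert\nabla w\vert_{\mathbb{L}^2(\mathcal{O})}\,\vert v\vert_{\mathbb{L}^4(\mathcal{O})}.
\end{equation*}
In both cases the existence and uniqueness of the bounded extension follow from a density argument: smooth divergence-free fields (equivalently $\rV$) are dense in the relevant factor spaces, and the continuity estimates allow one to pass to the limit and to identify the limit independently of the approximating sequence.

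To obtain the mapping property and the chain of inequalities for $B$, I would fix $u\in\mathbb{L}^4(\mathcal{O})\cap\rH$, test $B(u)=B(u,u)$ against $w\in\rV$, and use antisymmetry, which is legitimate because $u\in\rH$ is divergence free: $\langle B(u),w\rangle=b(u,u,w)=-b(u,w,u)$. The Hölder estimate then gives $\vert\langle B(u),w\rangle\vert\le\vert u\vert_{\mathbb{L}^4(\mathcal{O})}^2\,\vert w\vert_\rV$, so that $B(u)\in\rV'$ with $\vert B(u)\vert_{\rV'}\le C_1\vert u\vert_{\mathbb{L}^4(\mathcal{O})}^2$; in particular this applies to $u\in\rV\subset\mathbb{L}^4(\mathcal{O})\cap\rH$. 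The remaining inequalities in \eqref{eqn:4.0}, now for $u\in\rV$, are purely functional-analytic: the two-dimensional Ladyzhenskaya (Gagliardo–Nirenberg) inequality $\vert u\vert_{\mathbb{L}^4(\mathcal{O})}^2\le 2^{1/2}\vert u\vert_\rH\,\vert\nabla u\vert_{\mathbb{L}^2(\mathcal{O})}$ gives the second; the identity $\vert\nabla u\vert_{\mathbb{L}^2(\mathcal{O})}=\vert u\vert_\rV$ coming from the definition of the $\rV$-inner product gives the third; and the Poincaré inequality $\vert u\vert_\rH\le\lambda_1^{-1/2}\vert u\vert_\rV$, a restatement of the strict positivity property with $\lambda_1=4\pi^2/L^2$, gives the last, with $C_2=2^{1/2}C_1$ and $C_3=\lambda_1^{-1/2}C_2$.

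The genuinely delicate point is not any single estimate but the extension and uniqueness step: because the first slot is all of $\mathbb{L}^4(\mathcal{O})$ rather than merely its divergence-free subspace, one must argue carefully that the antisymmetric form furnishes a \emph{well-defined} and \emph{unique} continuous extension agreeing with $b$ on $\rV\times\rV\times\rV$, and that the antisymmetry \eqref{eqn:b01} indeed survives the passage to the limiting spaces $\mathbb{L}^4(\mathcal{O})\cap\rH$ and $\rV$. Everything else reduces to the single Hölder inequality together with the sharp two-dimensional Sobolev/Ladyzhenskaya constant, which I would quote from \cite{Temam_2001,Temam_1997} rather than rederive.
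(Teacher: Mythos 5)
The paper offers no proof of this lemma at all: it is recalled as a known fact, with a pointer to \cite[Lemma 4.2]{Brz+Li_2006} and \cite{Temam_2001}, so there is no in-paper argument to compare yours against. Your proof is the standard one and is essentially correct: the $(4,2,4)$ H\"older bound on the defining integral, the two-dimensional Sobolev embedding $\rV\embed \mathbb{L}^4(\mathcal{O})$, the antisymmetry \eqref{eqn:b01} to move the derivative off the non-differentiable middle slot, and then Ladyzhenskaya plus Poincar\'e to produce the chain \eqref{eqn:4.0} with $C_2=2^{1/2}C_1$ and $C_3=\lambda_1^{-1/2}C_2$ --- this is exactly how the cited sources argue. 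The one point where your write-up overstates matters is the uniqueness-by-density step: $\rV$ is indeed dense in $\mathbb{L}^4(\mathcal{O})\cap\rH$ (mollification on the torus preserves the divergence-free and mean-zero constraints), but it is not dense in all of $\mathbb{L}^4(\mathcal{O})$ --- gradient fields, for example, lie in the $L^2$-orthogonal complement of $\rH$ --- so the extension in the first slot cannot literally be pinned down by density. What one actually establishes is that the explicit (antisymmetrized, where needed) integral formula defines a bounded trilinear form on the larger product spaces which restricts to $b$ on $\rV\times\rV\times\rV$; the word ``unique'' in the statement should be read relative to that canonical formula. Since this imprecision is present in the lemma as quoted from \cite{Brz+Li_2006}, it is not a defect you introduced, and the rest of your argument, in particular the identity $\langle B(u),w\rangle=-b(u,w,u)$ used to get $\vert B(u)\vert_{\rV'}\le C_1\vert u\vert^2_{\mathbb{L}^4(\mathcal{O})}$, is sound.
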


\begin{lemma}\label{lem-B}
For any $T\in  (0,\infty]$ and  for any $u\in L^2(0,T;D(\rA))$ with $u^\prime \in  L^2(0,T;\rH)$, we have
\[\int_0^T \vert \rB(u(t),u(t)) \vert_\rH^2 \, dt <\infty.\]
\end{lemma}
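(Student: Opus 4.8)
The plan is to control the integrand pointwise in time using the interpolation inequality \eqref{ineq-B01} with $u=v$, and then absorb the resulting expression into the two hypotheses $u\in L^2(0,T;D(\rA))$ and $u'\in L^2(0,T;\rH)$. Setting $v=u$ in \eqref{ineq-B01} gives, for almost every $t$,
\[
\vert \rB(u(t),u(t))\vert_\rH^2 \leq C\,\vert u(t)\vert_\rH\,\vert\nabla u(t)\vert_\rH\,\vert\nabla u(t)\vert_\rH\,\vert\rA u(t)\vert_\rH
= C\,\vert u(t)\vert_\rH\,\vert u(t)\vert_\rV^2\,\vert\rA u(t)\vert_\rH,
\]
where I have used $\vert\nabla u\vert_\rH=\vert u\vert_\rV$ and $\rV=D(\rA^{1/2})$. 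The issue is that the right-hand side is a product of four factors and I only have $L^2$-in-time control of $\vert\rA u\vert_\rH$; the remaining three factors must therefore be controlled in $L^\infty$ in time.

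First I would establish that $u\in C([0,T];\rV)$, equivalently $u\in L^\infty(0,T;\rV)$, which is exactly the regularity class in which the factor $\vert u(t)\vert_\rV^2\,\vert u(t)\vert_\rH\le \vert u(t)\vert_\rV^3$ (using $\vert u\vert_\rH\le C\vert u\vert_\rV$ from strict positivity of $\rA$) becomes a bounded function of $t$. This is a standard Lions--Magenes type trace/interpolation result: from $u\in L^2(0,T;D(\rA))$ and $u'\in L^2(0,T;\rH)$ one concludes $u\in C([0,T];[D(\rA),\rH]_{1/2})=C([0,T];D(\rA^{1/2}))=C([0,T];\rV)$, by the interpolation characterization \eqref{eqn-fractional} already recorded in the excerpt. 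Hence $M:=\sup_{t\in[0,T]}\vert u(t)\vert_\rV<\infty$.

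With this bound in hand the estimate becomes purely routine: from the displayed pointwise inequality,
\[
\int_0^T \vert \rB(u(t),u(t))\vert_\rH^2\,dt
\;\leq\; C\,\sup_{t\in[0,T]}\bigl(\vert u(t)\vert_\rH\,\vert u(t)\vert_\rV^2\bigr)\int_0^T \vert\rA u(t)\vert_\rH\,dt,
\]
and since $\vert u\vert_\rH\le C\vert u\vert_\rV\le CM$ and $\vert u\vert_\rV^2\le M^2$, the prefactor is finite, while the remaining integral is finite because $u\in L^2(0,T;D(\rA))$ forces $\vert\rA u\vert_\rH\in L^2(0,T)\subset L^1(0,T)$ (on a finite interval; for $T=\infty$ I would instead keep two factors of $\vert\rA u\vert_\rH^{1/2}$ and estimate the integral by $\vert u\vert_{L^\infty(\rV)}^{3/2}\,\vert\rA u\vert_{L^2(\rH)}^{?}$ via Cauchy--Schwarz so that both surviving factors sit in $L^2(0,\infty)$). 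Concretely, for $T=\infty$ I would split the four factors as $\vert u\vert_\rH^{1/2}\vert u\vert_\rV\cdot \vert u\vert_\rH^{1/2}\vert\rA u\vert_\rH$ is not balanced, so the cleaner route is to write $\vert\rB(u,u)\vert_\rH^2\le C\,\vert u\vert_\rV^3\,\vert\rA u\vert_\rH\le C\,M^2\,\vert u\vert_\rV\,\vert\rA u\vert_\rH\le C M^2\bigl(\vert u\vert_\rV^2+\vert\rA u\vert_\rH^2\bigr)$, and both terms on the right are integrable over $(0,\infty)$ because $\vert u\vert_\rV\le\vert\rA^{1/2}u\vert_\rH\le C\vert\rA u\vert_\rH\in L^2$.

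The main obstacle is the borderline case $T=\infty$, where the uniform-in-time bound $\sup_t\vert u(t)\vert_\rV<\infty$ is not automatic from the hypotheses alone and one must argue more carefully that $u\in L^\infty(0,\infty;\rV)$ (or otherwise keep enough powers of $\vert\rA u\vert_\rH$ in the $L^2(0,\infty)$-integrable part). For finite $T$ the argument is essentially the interpolation embedding followed by Hölder; the only genuine content is the regularity upgrade $u\in C([0,T];\rV)$.
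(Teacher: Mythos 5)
The paper offers no proof of Lemma \ref{lem-B}: it is stated as a known fact (in the spirit of \cite{Temam_2001} and \cite{Brz+Li_2006}), so there is nothing in the text to compare your argument against line by line. Judged on its own, your approach is the natural one and is essentially correct. Taking $v=u$ in \eqref{ineq-B01} gives $\vert \rB(u,u)\vert_\rH^2\leq C\,\vert u\vert_\rH\,\vert u\vert_\rV^2\,\vert \rA u\vert_\rH$, and the Lions--Magenes trace theorem together with \eqref{eqn-fractional} does yield $u\in C([0,T];D(\rA^{1/2}))=C([0,T];\rV)$ from the two hypotheses; for finite $T$ the conclusion then follows by H\"older exactly as you say.

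The one loose end is the case $T=\infty$, which you flag but do not close: your final estimate $\vert\rB(u,u)\vert_\rH^2\leq CM^2\bigl(\vert u\vert_\rV^2+\vert\rA u\vert_\rH^2\bigr)$ still requires $M=\sup_{t\geq 0}\vert u(t)\vert_\rV<\infty$, and this does not follow from the finite-interval trace theorem alone. The gap is genuinely fillable by a standard one-line argument you should include: the function $t\mapsto\vert\rA^{1/2}u(t)\vert_\rH^2$ is absolutely continuous with
\begin{equation*}
\frac{d}{dt}\vert \rA^{1/2}u(t)\vert_\rH^2=2\lb u^\prime(t),\rA u(t)\rb_\rH\leq \vert u^\prime(t)\vert_\rH^2+\vert \rA u(t)\vert_\rH^2\in L^1(0,\infty),
\end{equation*}
so $\vert u(t)\vert_\rV^2\leq \vert u(0)\vert_\rV^2+\vert u^\prime\vert^2_{L^2(0,\infty;\rH)}+\vert \rA u\vert^2_{L^2(0,\infty;\rH)}$ uniformly in $t$ (the finiteness of $\vert u(0)\vert_\rV$ being supplied by the trace theorem on, say, $[0,1]$). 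With that sentence added, your proof is complete for both finite and infinite $T$; the intermediate digression about splitting the four factors in an ``unbalanced'' way can simply be deleted.
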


The restriction of the map $\rB$ to the space $D(\rA)\times D(\rA)$ has also the following representation
\begin{equation}
\label{eqn-B-using-LH}
  \rB(u,v)= P( u\nabla v)=P(\sum_{j=1}^2 u^jD_jv), \;\; u,v\in D(\rA).
\end{equation}
In view of \eqref{eqn-Leray-Helmholtz},  the above
representation  allows us to prove the following property of the map $B$ (compare with a weaker result  in \cite[Proposition 2.5]{BCF_2013} for the Dirichlet boundary case).

\begin{proposition}\label{prop-Leray-fractional-alpha} Assume that $\alpha \in (0,1] $. Then for any $s \in\,(1,2]$ there exists a constant $c>0$ such that
\begin{equation}
\label{ineqn-B-fractional}
  \vert \rB(u,v)\vert_{D(\rA^{\frac\alpha2})} \leq  c \vert u\vert_{D(\rA^{\frac{s}2})}  \vert v\vert_{D(\rA^{\frac{\alpha+1}2})} , \;\; u,v\in D(\rA).
\end{equation}
\end{proposition}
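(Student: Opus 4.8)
The plan is to pass from the fractional-domain norms to Bessel-potential Sobolev norms, peel off the Leray--Helmholtz projection, and reduce everything to a single pointwise-multiplication estimate, which is then obtained by interpolation. By \eqref{eqn-fractional domains} the norms $|\cdot|_{D(\rA^{\alpha/2})}$, $|\cdot|_{D(\rA^{s/2})}$ and $|\cdot|_{D(\rA^{(\alpha+1)/2})}$ are equivalent, on $\rH$, to the norms of $H^{\alpha,2}(\mathbb{T}^2,\mathbb{R}^2)$, $H^{s,2}(\mathbb{T}^2,\mathbb{R}^2)$ and $H^{\alpha+1,2}(\mathbb{T}^2,\mathbb{R}^2)$. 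Hence it suffices to prove, for $u,v\in D(\rA)$,
\[
\|\rB(u,v)\|_{H^{\alpha,2}}\le c\,\|u\|_{H^{s,2}}\,\|v\|_{H^{\alpha+1,2}}.
\]

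First I would use the representation \eqref{eqn-B-using-LH}, namely $\rB(u,v)=P(u\nabla v)=P\big(\sum_{j=1}^2 u^jD_jv\big)$, together with the boundedness of $P$ on the Sobolev scale recorded in \eqref{eqn-Leray-Helmholtz} (on the torus $P$ is a homogeneous order-zero Fourier multiplier, so it maps $H^{\alpha,2}$ boundedly into $D(\rA^{\alpha/2})$). This reduces the claim to the product estimate $\|u\nabla v\|_{H^{\alpha,2}}\le c\,\|u\|_{H^{s,2}}\,\|\nabla v\|_{H^{\alpha,2}}$, after which the elementary bound $\|\nabla v\|_{H^{\alpha,2}}\le C\,\|v\|_{H^{\alpha+1,2}}$ closes the reduction. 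Working componentwise, it is then enough to bound $\|u^jD_jv\|_{H^{\alpha,2}}$ for $j=1,2$.

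To prove the product estimate I would fix $f:=u^j\in H^{s,2}$ and study the multiplication operator $T_f\colon g\mapsto fg$. For the two endpoint exponents $0$ and $1$ one has $\|T_fg\|_{L^2}\le\|f\|_{L^\infty}\|g\|_{L^2}$ and, by the Leibniz rule and Hölder's inequality, $\|T_fg\|_{H^{1,2}}\le C\big(\|f\|_{L^\infty}\|g\|_{H^{1,2}}+\|\nabla f\|_{L^p}\|g\|_{L^q}\big)$ with $\tfrac1p+\tfrac1q=\tfrac12$. Since $\mathbb{T}^2$ is two-dimensional and $s>1$, the embeddings $H^{s,2}\embed L^\infty$, $H^{s-1,2}\embed L^p$ (with $p=2/(2-s)$ when $s<2$) and $H^{1,2}\embed L^q$ for every $q<\infty$ yield $\|T_f\|_{L^2\to L^2}\le c\,\|f\|_{H^{s,2}}$ and $\|T_f\|_{H^{1,2}\to H^{1,2}}\le c\,\|f\|_{H^{s,2}}$. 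Complex interpolation of the operator $T_f$ between $L^2$ and $H^{1,2}$ (in the spirit of \eqref{eqn-fractional}) then gives $\|T_f\|_{H^{\alpha,2}\to H^{\alpha,2}}\le c\,\|f\|_{H^{s,2}}$ for every $\alpha\in[0,1]$, i.e. $\|u^jD_jv\|_{H^{\alpha,2}}\le c\,\|u^j\|_{H^{s,2}}\|D_jv\|_{H^{\alpha,2}}$; summing over $j$ gives the product estimate. Alternatively one may simply quote the multiplication theorem for Bessel-potential spaces, $H^{s_1,2}\cdot H^{s_2,2}\embed H^{s_0,2}$ whenever $s_0\le\min(s_1,s_2)$, $s_1+s_2\ge0$ and $s_1+s_2-s_0>d/2$, applied with $s_1=s$, $s_2=s_0=\alpha$ and $d=2$.

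The main obstacle is the borderline regularity $d/2=1$: the hypothesis $s>1$ is used crucially and cannot be relaxed to $s=1$, precisely because $H^{1,2}(\mathbb{T}^2)$ fails to embed into $L^\infty$, so both endpoint bounds for $T_f$ (equivalently, the high-high interaction in the paraproduct) would break down. This is exactly what forces the range $s\in(1,2]$, and it is compatible with $\alpha\in(0,1]$ through the requirement $\alpha\le s$. The only remaining points are routine: the equivalence of the vector-valued domain and Sobolev norms and the harmless passage between homogeneous and inhomogeneous norms, both guaranteed on the torus by the zero-mean condition \eqref{eqn-int=0} and the Poincaré inequality.
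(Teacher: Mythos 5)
Your proposal is correct and follows essentially the same route as the paper: reduce via the representation $\rB(u,v)=P(u\nabla v)$ and the boundedness of $P$ on the Sobolev scale to a scalar product estimate, establish the endpoint cases $\alpha=0$ and $\alpha=1$ using the embedding $H^{s,2}(\mathbb{T}^2)\embed L^\infty$ for $s>1$ together with the Leibniz rule and H\"older's inequality, and then obtain the intermediate range $\alpha\in(0,1)$ by complex interpolation of the multiplication operator. The only cosmetic difference is that you phrase the interpolation step explicitly as an operator bound for $T_f:g\mapsto fg$, whereas the paper interpolates the bilinear estimate directly; the substance is the same.
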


\begin{proof}
In view of equality \eqref{eqn-B-using-LH}, since by \eqref{eqn-Leray-Helmholtz}
 the  Leray-Helmholtz projection $P$  is a well defined and continuous map  from
$\mathbb{H}^{\alpha}(\mathcal{O})$ into  $ D(\rA^{\frac\alpha2})$ and since the norms in the spaces $D(\rA^{\frac{s}2})$ are equivalent to norms in $\mathbb{H}^s(\mathcal{O})$, it is enough to show that
\[
\vert u\nabla v \vert_{\mathbb{H}^{\alpha}} \leq  c \vert u\vert_{\mathbb{H}^{s}}  \vert v\vert_{\mathbb{H}^{\alpha+1}} , \;\; u,v\in \mathbb{H}^2(\mathcal{O}).
\]
Thus, it is sufficient to prove for scalar valued functions
\begin{equation}\label{ineq-001}
\vert fg \vert_{H^{\alpha}} \leq  c \vert f\vert_{H^{s}}  \vert g\vert_{H^{\alpha}} , \;\; f,g\in H^2.
\end{equation}

First we consider the case $\alpha=0$. In this case it is sufficient to assume that $s \in (1,2)$ and we have
\begin{eqnarray*}
\vert fg \vert_{L^2}
&\leq&  \vert  f \vert_{L^\infty} \vert g \vert_{L^2}  \leq
\vert  f \vert_{H^s}\vert g \vert_{L^2}
\end{eqnarray*}
by the  Gagliado-Nirenberg inequality, which implies that $H^s \embed L^\infty$ continuously.

 Secondly, we consider the case $\alpha=1$. Also in this case it is sufficient to assume that $s \in (1,2)$. Then, by the Sobolev
Gagliado-Nirenberg inequalities, we have
\begin{eqnarray*}
\vert \nabla (fg) \vert_{L^2}   &\leq &  \vert g \nabla f \vert_{L^2} + \vert f \nabla g \vert_{L^2}\\
&\leq&  \vert \nabla f \vert_{L^p}\vert g \vert_{L^q} + \vert f \vert_{L^\infty} \vert \nabla g \vert_{L^2}
\\
&\leq&  \vert \nabla f \vert_{L^p}\vert g \vert_{H^1} + \vert f \vert_{H^s} \vert \nabla g \vert_{L^2}\\
&\leq&  \vert  f \vert_{H^s}\vert g \vert_{H^1} + \vert f \vert_{H^s} \vert \nabla g \vert_{L^2}
\end{eqnarray*}
where $p,q \in (2,\infty)$ are such that $\frac1p+\frac1q=\frac12$ and $\frac12=\frac1p+\frac{s-1}2$, i.e. $\frac1p=1-\frac{s}2$. From the above two inequalities we trivially deduce that
\begin{eqnarray*}
\vert fg \vert_{H^1}   &\leq &   \sqrt{6} \vert  f \vert_{H^s}\vert g \vert_{H^1}
\end{eqnarray*}
what proves inequality \eqref{ineq-001} for $\alpha=1$.

Finally, let us consider the case $\alpha \in (0,1)$. By  a complex interpolation  argument and the Marcinkiewicz Interpolation Theorem,  we infer that for any $\alpha \in\,(0,1)$
\begin{eqnarray*}
\vert fg \vert_{H^\alpha} &\leq& 6^{\frac\alpha2} \vert  f \vert_{H^s}\vert g \vert_{H^\alpha},
\end{eqnarray*}
so that the proof is complete.
\end{proof}
Since the Sobolev space $H^\alpha$ is an algebra for $\alpha>1$, we have the following result.

\begin{proposition}\label{prop-Leray-fractional-alpha2} Assume that $\alpha \in (1,\infty) $. Then  there exists a constant $c>0$ such that
\begin{equation}
\label{ineqn-B-fractional2}
  \vert \rB(u,v)\vert_{D(\rA^{\frac\alpha2})} \leq  c \vert u\vert_{D(\rA^{\frac{\alpha}2})}  \vert v\vert_{D(\rA^{\frac{\alpha+1}2})} , \;\; u \in\,D(\rA^{\frac \alpha 2}),\ ,v \in\, D(\rA^{\frac{1+\alpha}2}).
\end{equation}
\end{proposition}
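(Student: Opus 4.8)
The plan is to reduce the claim, exactly as in the proof of Proposition \ref{prop-Leray-fractional-alpha}, to a scalar product estimate in Sobolev spaces. Using the representation \eqref{eqn-B-using-LH}, namely $\rB(u,v)=P(u\nabla v)$, together with the boundedness of the Leray--Helmholtz projection $P:\mathbb{H}^{\alpha}(\mathcal{O})\to D(\rA^{\frac\alpha2})$ from \eqref{eqn-Leray-Helmholtz} and the equivalence \eqref{eqn-fractional domains} of $D(\rA^{\frac{s}2})$-norms with $\mathbb{H}^s(\mathcal{O})$-norms, it suffices to prove the scalar inequality
\begin{equation}
\label{plan-scalar}
\vert fg\vert_{H^\alpha}\leq c\,\vert f\vert_{H^\alpha}\,\vert g\vert_{H^\alpha},\qquad f,g\in H^2,
\end{equation}
and then apply it with $f=u$ (at regularity $H^\alpha$) and $g=\nabla v$, noting that $v\in D(\rA^{\frac{1+\alpha}2})$ means $v\in\mathbb{H}^{\alpha+1}$, hence $\nabla v\in\mathbb{H}^\alpha$ with $\vert\nabla v\vert_{H^\alpha}\lesssim\vert v\vert_{H^{\alpha+1}}\simeq\vert v\vert_{D(\rA^{\frac{\alpha+1}2})}$.

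The key point, and the one the preamble to the statement flags, is the \emph{Banach algebra property}: for $\alpha>1$ (more precisely $\alpha>d/2=1$ in dimension two) the Sobolev space $H^\alpha(\mathcal{O})$ is a Banach algebra under pointwise multiplication, so that \eqref{plan-scalar} holds directly. I would cite this standard fact (for instance Triebel \cite{Triebel_1978}, or any of the references already invoked) rather than reprove it. The underlying mechanism is the continuous embedding $H^\alpha\embed L^\infty$ for $\alpha>1$, which allows one to control the pointwise product: heuristically, in each dyadic Littlewood--Paley block one factor can be placed in $L^\infty$ and the other in $H^\alpha$, and summing the resulting pieces is harmless precisely because $\alpha>1$ provides room in the exponents. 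This is the essential simplification over the regime $\alpha\in(0,1]$ treated in Proposition \ref{prop-Leray-fractional-alpha}, where one could not afford to put $f$ at the same regularity $H^\alpha$ and was forced to borrow extra smoothness by requiring $f\in H^s$ with $s\in(1,2]$.

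Concretely the chain of inequalities I would record is
\begin{equation}
\label{plan-chain}
\vert \rB(u,v)\vert_{D(\rA^{\frac\alpha2})}
\leq c\,\vert u\nabla v\vert_{\mathbb{H}^\alpha}
\leq c\,\vert u\vert_{\mathbb{H}^\alpha}\,\vert\nabla v\vert_{\mathbb{H}^\alpha}
\leq c\,\vert u\vert_{\mathbb{H}^\alpha}\,\vert v\vert_{\mathbb{H}^{\alpha+1}}
\leq c\,\vert u\vert_{D(\rA^{\frac\alpha2})}\,\vert v\vert_{D(\rA^{\frac{\alpha+1}2})},
\end{equation}
where the first inequality uses \eqref{eqn-Leray-Helmholtz}, the second is the algebra property \eqref{plan-scalar} applied componentwise, the third is boundedness of $\nabla:\mathbb{H}^{\alpha+1}\to\mathbb{H}^\alpha$, and the last is the norm equivalence \eqref{eqn-fractional domains}. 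I do not expect any genuine obstacle here: once the algebra property is in hand the estimate is immediate, and the only care needed is the bookkeeping of passing from scalar functions to the $\mathbb{R}^2$-valued setting and confirming that the density of $D(\rA)$ in the relevant fractional domains lets one extend \eqref{ineqn-B-fractional2} from smooth $u,v$ to all $u\in D(\rA^{\frac\alpha2})$, $v\in D(\rA^{\frac{1+\alpha}2})$ by continuity.
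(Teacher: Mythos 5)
Your proposal is correct and follows essentially the same route as the paper: reduce via the representation $\rB(u,v)=P(u\nabla v)$, the boundedness of $P$ from \eqref{eqn-Leray-Helmholtz}, and the norm equivalences to the scalar estimate $\vert u\nabla v\vert_{\mathbb{H}^\alpha}\leq c\vert u\vert_{\mathbb{H}^\alpha}\vert v\vert_{\mathbb{H}^{\alpha+1}}$, then invoke the Banach algebra property of $H^\alpha$ for $\alpha>1$ together with $\vert\nabla v\vert_{\mathbb{H}^\alpha}\leq c\vert v\vert_{\mathbb{H}^{\alpha+1}}$. The additional remarks you give on the Littlewood--Paley mechanism and the density extension are fine but not needed; the paper's proof is exactly your chain \eqref{plan-chain}.
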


\begin{proof}
Let us fix $\alpha \in (1,\infty) $.
In view of equality \eqref{eqn-B-using-LH}, as in the proof of Proposition \ref{prop-Leray-fractional-alpha} it is enough to show that
\[
\vert u\nabla v \vert_{\mathbb{H}^{\alpha}} \leq  c \vert u\vert_{\mathbb{H}^{\alpha}}  \vert v\vert_{\mathbb{H}^{\alpha+1}} , \;\; u \in\, \mathbb{H}^\alpha(\mathcal{O}),\ v \in\, \mathbb{H}^{\alpha+1}(\mathcal{O}).
\]
Since the Sobolev space $H^\alpha$ is an algebra and $\vert \nabla v \vert_{\mathbb{H}^{\alpha}} \leq c \vert  v \vert_{\mathbb{H}^{\alpha+1}}$, the result follows.

\end{proof}

\begin{remark}\label{rem-form B} One consequence of  Propositions
\ref{prop-Leray-fractional-alpha}  and \ref{prop-Leray-fractional-alpha2} is that the 2-D NSEs with periodic boundary conditions are locally well posed in the space $D(\rA^{\frac{\beta}2})$, for every $\beta \geq 0$. To be precise for every $u_0\in D(\rA^{\frac{\beta}2})$ and every $f \in L^2_{{\rm loc}}([0,\infty);D(\rA^{\frac{\beta}2-\frac12})$ there exists $T>0$ and a strong solution $u$ defined on $[0,T]$. This local existence result  is  well known for $\beta \in \{0,1\}$ for 2-D NSEs with either Dirichlet or periodic boundary conditions. Moreover, it is rather a folk result for
$\beta \in (0,1)$. However,  in \cite{BCF_2013} we proved it  to be true also  for $\beta \in (1,\frac 32)$. The difference between the NSEs with general boundary conditions and NSEs on a torus stems from the fact that, while Propositions
\ref{prop-Leray-fractional-alpha}  and \ref{prop-Leray-fractional-alpha2} hold in the latter case for any $\alpha>0$, we have  been able to establish a corresponding result in the former case  for only $\alpha \in [0,\frac12)$. And the root for this difference lies in the properties of the the Leray-Helmholtz projection $P$. Actually, while in the latter case, it is a bounded linear map from
$ H^{\alpha,2}(\mathbb{T}^2,\mathbb{R}^2)$ into $D(\rA^\alpha)$, in the former case we have been able  to prove an analogous result only for $\a \in\,(0,\frac 12)$.
As in \cite{BCF_2013}, by using the global well-posedness in $\rH$, local well-posedness in $D(\rA^{\frac{\beta}2})$ implies also  global well-posedness. See Proposition \ref{prop-NSEs-global} for precise formulations of these results.
 \end{remark}

\begin{remark}
Similar inequalities to those in Propositions \ref{prop-Leray-fractional-alpha} and \ref{prop-Leray-fractional-alpha2} have also been studied in \cite{Brz+Dgarival_2015}.
\end{remark}
\bigskip

\section{The skeleton equation}\label{sec-skeleton}

We consider here the following Navier-Stokes equation
\begin{equation}
\label{eqn_NSE01}
\left\{
\begin{array}{l}
u^\prime(t)+ \rA u(t)+\rB(u(t),u(t))=f(t), \ \ \ \ t\in (a,b),\\
\vspace{.1cm} \\
u(a)=u_0,
\end{array}\right.\end{equation}
where $-\infty<a<b<\infty$.

\begin{definition}
Given any $f\in L^2(a,b;\rV^\prime)$ and $u_0\in \rH$, a solution to problem \eqref{eqn_NSE01} is a function $u\in L^2(a,b;\rV)$ such that $u^\prime \in L^2(a,b;\rV^\prime)$,  $u(a)=u_0$ \footnote{It is known (see for instance \cite[Lemma III.1.2]{Temam_2001}) that these two properties of $u$ imply that $u$ is almost everywhere equal to a function $\bar{u}\in C([a,b],\rH)$. Thus, when we later write $u(a)$  we mean $\bar{u}(0)$.} and \eqref{eqn_NSE01} is fulfilled.
\end{definition}

As shown in   \cite[Theorems III.3.1/2]{Temam_2001}, for every  $f\in L^2(a,b;\rV^\prime)$ and $u_0\in \rH$ there exists exactly one solution $u$ to  problem \eqref{eqn_NSE01}.

\begin{lemma}
For any $r>0$, there exists $c_r>0$ such that, if  $u\in C([T,+\infty);\rH)$, for some $T \in\,\mathbb{R}$, and
$$u^\prime+\rA u+\rB(u,u)=:f \in L^2(T,+\infty;\rH),$$
then
\begin{equation}
\label{cb3}
|f|_{L^2(T,+\infty;\rH)}\leq r\Longrightarrow |u|_{C([T,+\infty);\rH)}\leq c_r+|u(T)|_{\rH}.\end{equation}
Moreover
\begin{equation}
\label{cb24}
 |u(t)|_{\rH}\leq \ex{-\lambda_1 (t-T)}|u(T)|_{\rH}+\frac 1{\lambda_1}|f|^2_{L^2(T,+\infty;\rH)},\ \ \ \ t \geq T.\end{equation}
\end{lemma}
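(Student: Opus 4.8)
The plan is to derive both estimates from the standard energy identity obtained by testing the equation against $u$ itself. Since $u$ solves $u^\prime + \rA u + \rB(u,u) = f$ with $f \in L^2(T,+\infty;\rH)$, and using the orthogonality relation \eqref{eqn:b02}, namely $\lb \rB(u,u),u\rb = 0$, the nonlinear term drops out entirely when we pair the equation with $u$ in $\rH$. This is the crucial simplification: the notoriously difficult convection term contributes nothing to the energy balance. First I would write, for almost every $t \geq T$,
\[
\frac12 \frac{d}{dt}|u(t)|_{\rH}^2 + \lb \rA u(t), u(t)\rb = \lb f(t), u(t)\rb,
\]
and then invoke the strict positivity $\lb \rA u, u\rb_{\rH} \geq \lambda_1 |u|_{\rH}^2$ stated in the preliminaries (reading the displayed inequality there with the square, as dimensional consistency demands).

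For the pointwise bound \eqref{cb24}, I would estimate the right-hand side $\lb f,u\rb$ by Cauchy-Schwarz and a weighted Young inequality, splitting off a piece that can be absorbed into the $\lambda_1 |u|_{\rH}^2$ coercivity term. This yields a differential inequality of the form $\frac{d}{dt}|u|_{\rH}^2 + \lambda_1 |u|_{\rH}^2 \leq C|f|_{\rH}^2$, after which Gronwall's lemma (in its integrating-factor form, multiplying through by $\ex{\lambda_1(t-T)}$) produces the exponential decay of the initial datum plus a term controlled by $\int_T^t |f(s)|_{\rH}^2\,ds \leq |f|^2_{L^2(T,+\infty;\rH)}$. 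I expect the precise constants to match \eqref{cb24} with a judicious choice in the Young inequality, though reconciling the exact factor $\frac{1}{\lambda_1}$ stated there may require treating the full coercivity term $2\lambda_1$ from the factor of $2$ in the energy identity and not absorbing any of it.

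For the uniform bound \eqref{cb3}, the point is that \eqref{cb24} already gives, for $|f|_{L^2(T,+\infty;\rH)} \leq r$,
\[
|u(t)|_{\rH} \leq |u(T)|_{\rH} + \frac{r^2}{\lambda_1}, \qquad t \geq T,
\]
so taking the supremum over $t \geq T$ and setting $c_r := r^2/\lambda_1$ immediately delivers \eqref{cb3}, since $\ex{-\lambda_1(t-T)} \leq 1$. Thus \eqref{cb3} is essentially a corollary of \eqref{cb24}, and the genuine content lies entirely in establishing the latter. One should note that the continuity $u \in C([T,\infty);\rH)$ together with the regularity implicit in $f \in L^2(T,\infty;\rH)$ is what justifies that the energy identity holds and that $|u(t)|_{\rH}$ is the genuine pointwise norm rather than merely an almost-everywhere representative.

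The main obstacle I anticipate is not conceptual but bookkeeping: pinning down the exact constants so they agree with the stated $\frac{1}{\lambda_1}$ coefficient and the clean implication in \eqref{cb3}. The danger is a mismatch between $|f|_{L^2}$ and $|f|_{L^2}^2$ in the final term, and ensuring the Young-inequality split leaves enough coercivity to run Gronwall while still landing on the precise prefactor. I would therefore carry out the weighted Young step carefully, likely using $\lb f,u\rb \leq \frac{\lambda_1}{2}|u|_{\rH}^2 + \frac{1}{2\lambda_1}|f|_{\rH}^2$ so that half the coercivity survives for the exponential rate and the other half controls the forcing.
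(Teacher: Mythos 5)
Your proof is correct and essentially identical to the paper's: both rest on the energy identity with $\lb \rB(u,u),u\rb=0$, the Poincar\'e/coercivity bound, a Young splitting of $\lb f,u\rb$ that preserves half the coercivity, and Gronwall, the only cosmetic difference being that the paper obtains \eqref{cb3} directly from the time-integrated energy inequality rather than as a corollary of \eqref{cb24}. Your worry about the constants is well founded — the Gronwall step actually bounds $|u(t)|_{\rH}^2$, so after taking square roots the forcing term enters as $|f|_{L^2(T,+\infty;\rH)}$ to the first power (with $\lambda_1^{-1/2}$), and the displayed form of \eqref{cb24} is itself slightly loose on this point — but this has no bearing on the validity of \eqref{cb3}.
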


\begin{proof}
We have
\begin{equation}
\label{cb23}
\frac 12\frac{d}{dt}|u(t)|^2_{\rH}+|u(t)|^2_{\rV}\leq \frac 12|u(t)|_{\rV}^2+\frac 1{2\lambda_1}|f(t)|_{\rH}^2.\end{equation}
This implies that
\begin{equation}
\label{cb4}
|u|^2_{C([a,b];\rH)}+|u|^2_{L^2(a,b;\rV)}\leq |u(a)|^2_{\rH}+\frac 1{\lambda_1}|f|^2_{L^2(a,b;\rH)},\end{equation}
which yields \eqref{cb3}, if we take $a=T$ and $b=+\infty$.

Moreover, from \eqref{cb23} we have
\[\frac 12\frac{d}{dt}|u(t)|^2_{\rH}+\lambda_1\,|u(t)|^2_{\rH}\leq\frac 1{\lambda_1}|f(t)|_{\rH}^2,\]
so that, by the Gronwall lemma, for any $a\leq t\leq b\leq +\infty$ we have
\[|u(t)|^2_{\rH}\leq |u(a)|^2_{\rH}\ex{-\lambda_1(t-a)}+\frac 1{\lambda_1}\int_a^t \ex{-\lambda_1(t-s)}|f(s)|_{\rH}^2\,ds,\]
which implies \eqref{cb24}.

\end{proof}

In \cite[Theorem III.3.10]{Temam_2001} it is proven that, if  $f\in L^2(a,b;\rH)$,  the solution $u$ of equation \eqref{eqn_NSE01} has the following properties
$$\sqrt{(\cdot-a)}\,u \in L^2(a,b;D(\rA)) \cap  L^\infty(a,b;\rV),\;\; \sqrt{(\cdot-a)}\,u^\prime \in L^2(a,b;\rH).$$
Moreover,  there exists $c>0$ such that for all $a<b$
\begin{equation}
\label{brc82}
\begin{array}{l}
|\sqrt{(\cdot-a)}\,u|^2_{L^\infty(a,b;\rV)}+|\sqrt{(\cdot-a)}\,u|^2_{L^2(a,b;D(\rA))}\\
\vspace{.1cm}\\
\leq c\, \exp\,\left[c\big( \vert u_0\vert_{\rH}^4+ \vert f\vert^4_{L^2(a,b;\rV^\prime)} \big)\right]\Big( \vert u_0\vert_{\rH}^2+ \vert f\vert^2_{L^2(a,b;\rV^\prime)}+|b-a|\vert f\vert^2_{L^2(a,b;\rH)} \Big).
\end{array}
\end{equation}

In \cite[Proposition 3.3]{BCF_2013} we have also proved the following result
for $\alpha \in (0,\frac12)$ for 2-D NSEs with both Dirichlet and periodic boundary conditions. It turns out that in the latter case it is  true  for any $\alpha\geq 0$.
\begin{proposition}\label{prop-NSEs-global} Suppose that $\alpha \geq 0$.
If $f\in L^2(a,b;D(\rA^{\frac{\alpha}{2}})$ and $u_0\in D(\rA^{\frac{\alpha+1}{2}})$, for some $\a\geq 0$,  then the unique solution $u$  to the problem \eqref{eqn_NSE01} satisfies
\begin{equation}\label{eqn-u-better regularity}
u\in L^2(a,b;D(\rA^{1+\frac{\alpha}{2}}) \cap C([a,b];D(\rA^{\frac{\alpha+1}{2}})),\;\; u^\prime(\cdot)\in L^2(a,b;D(\rA^{\frac{\alpha}{2}})).
\end{equation}
\end{proposition}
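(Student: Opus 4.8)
The plan is to obtain the stated regularity from an a priori energy estimate performed at the level of $D(\rA^{\frac{\alpha+1}{2}})$, combined with a finite bootstrap to reach arbitrarily large $\alpha$. Since existence and uniqueness of the solution $u$ to \eqref{eqn_NSE01} is already known, it suffices to derive the quantitative bounds; by uniqueness these may be established on the Galerkin scheme $u_n=P_nu$ (projection onto the first $n$ eigenspaces of $\rA$), where every pairing with a fractional power of $\rA$ is legitimate because $\rA$ commutes with $P_n$, and then transferred to $u$ by weak and weak-$*$ compactness. I would formulate all estimates on $u_n$ and suppress this in the sketch.

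The base case $\alpha=0$ is classical. Taking the $\rH$-inner product of \eqref{eqn_NSE01} with $\rA u$ and invoking the orthogonality $\lb\rA u,\rB(u,u)\rb_\rH=0$ from \eqref{eqn-A-B}, the nonlinear term disappears entirely, and after a Young inequality one gets
\[
\frac{d}{dt}\,\vert \rA^{\frac12}u(t)\vert_\rH^2+\vert \rA u(t)\vert_\rH^2\leq \vert f(t)\vert_\rH^2,
\]
so that $u\in C([a,b];\rV)\cap L^2(a,b;D(\rA))$ and $u'\in L^2(a,b;\rH)$; this is the standard strong-solution bound, cf. \eqref{brc82} in the case $u_0\in\rV$.

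For the inductive step, I assume the assertion for all exponents in $[0,\alpha_0]$ and fix $\alpha\in(\alpha_0,\alpha_0+2]$ (the first step being $\alpha_0=0$, which covers $\alpha\in(0,2]$). Pairing \eqref{eqn_NSE01} with $\rA^{\alpha+1}u$ yields
\[
\tfrac12\frac{d}{dt}\vert \rA^{\frac{\alpha+1}2}u\vert_\rH^2+\vert \rA^{1+\frac\alpha2}u\vert_\rH^2=\lb \rA^{\frac\alpha2}f,\rA^{1+\frac\alpha2}u\rb_\rH-\lb \rA^{\frac\alpha2}\rB(u,u),\rA^{1+\frac\alpha2}u\rb_\rH.
\]
The crux is the nonlinear term: by Proposition \ref{prop-Leray-fractional-alpha} (for $\alpha\in(0,1]$, taking $s=2$) or Proposition \ref{prop-Leray-fractional-alpha2} (for $\alpha>1$) it is bounded by $c\,\vert u\vert_{D(\rA^{\sigma/2})}\vert \rA^{\frac{\alpha+1}2}u\vert_\rH\vert \rA^{1+\frac\alpha2}u\vert_\rH$, with $\sigma=2$ when $\alpha\leq1$ and $\sigma=\alpha$ when $\alpha>1$. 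One factor is exactly the top-order quantity on the left, so Young's inequality absorbs it and leaves
\[
\frac{d}{dt}\vert \rA^{\frac{\alpha+1}2}u\vert_\rH^2+\vert \rA^{1+\frac\alpha2}u\vert_\rH^2\leq c\,\vert u\vert_{D(\rA^{\sigma/2})}^2\,\vert \rA^{\frac{\alpha+1}2}u\vert_\rH^2+c\,\vert \rA^{\frac\alpha2}f\vert_\rH^2.
\]
Gronwall's lemma closes this provided $t\mapsto\vert u(t)\vert_{D(\rA^{\sigma/2})}^2$ is integrable on $(a,b)$, the remaining data $u_0\in D(\rA^{\frac{\alpha+1}2})$ and $f\in L^2(a,b;D(\rA^{\frac\alpha2}))$ being precisely the hypotheses. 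When $\alpha\leq2$ we have $\sigma/2\leq1$, so $\vert u\vert_{D(\rA^{\sigma/2})}\leq C\vert u\vert_{D(\rA)}$ and integrability follows from the base case; for $\alpha\in(\alpha_0,\alpha_0+2]$ with $\alpha_0\ge0$ we have $\sigma/2=\alpha/2\leq1+\alpha_0/2$, and integrability follows from the inductive hypothesis $u\in L^2(a,b;D(\rA^{1+\frac{\alpha_0}2}))$ (the data at level $\alpha$ trivially imply the data at level $\alpha_0$).

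This produces $u\in L^\infty(a,b;D(\rA^{\frac{\alpha+1}2}))\cap L^2(a,b;D(\rA^{1+\frac\alpha2}))$. Reinserting into $u'=f-\rA u-\rB(u,u)$, I note $\rA u\in L^2(a,b;D(\rA^{\frac\alpha2}))$ and, by Propositions \ref{prop-Leray-fractional-alpha} and \ref{prop-Leray-fractional-alpha2}, $\rB(u,u)\in L^\infty(a,b;D(\rA^{\frac\alpha2}))$, whence $u'\in L^2(a,b;D(\rA^{\frac\alpha2}))$. Finally, $u\in L^2(a,b;D(\rA^{1+\frac\alpha2}))$ with $u'\in L^2(a,b;D(\rA^{\frac\alpha2}))$ and the interpolation identity $D(\rA^{\frac{\alpha+1}2})=[D(\rA^{1+\frac\alpha2}),D(\rA^{\frac\alpha2})]_{1/2}$ upgrade the $L^\infty$ bound to $u\in C([a,b];D(\rA^{\frac{\alpha+1}2}))$ by the standard Lions trace theorem. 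Since any $\alpha$ is reached in finitely many steps of length $2$, this completes the argument. I expect the main obstacle to be exactly the closure of the Gronwall coefficient, namely guaranteeing $\int_a^b\vert u\vert_{D(\rA^{\sigma/2})}^2\,dt<\infty$: this is what forces the bootstrap for $\alpha>2$ and requires careful bookkeeping of which previously established regularity level controls it. The Galerkin justification of the formal pairing with $\rA^{\alpha+1}u$ is routine but must be carried out for rigor.
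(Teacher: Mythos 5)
Your argument is correct and is essentially the paper's own: the text simply defers to \cite[Proposition 3.3]{BCF_2013} together with Propositions \ref{prop-Leray-fractional-alpha} and \ref{prop-Leray-fractional-alpha2}, and the computation you perform (Galerkin pairing with $\rA^{\alpha+1}u$, absorbing the nonlinearity via those two propositions with $s=2$ for $\alpha\le 1$ and $s=\alpha$ for $\alpha>1$, Gronwall, the Lions trace theorem, and an induction in steps of length $2$ in $\alpha$) is precisely the scheme the authors carry out explicitly in Appendix \ref{section-negative} for the negative half-line analogue. The only cosmetic slip is the claim $\rB(u,u)\in L^\infty(a,b;D(\rA^{\frac\alpha2}))$ when $\alpha\le 1$: there the bound involves the factor $\vert u\vert_{D(\rA)}$, which is only square-integrable, so one gets $\rB(u,u)\in L^2(a,b;D(\rA^{\frac\alpha2}))$ --- which is all that is needed to conclude $u'\in L^2(a,b;D(\rA^{\frac\alpha2}))$.
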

\begin{proof}
As discussed in Remark \ref{rem-form B}, the above result follows from Propositions
\ref{prop-Leray-fractional-alpha}  and \ref{prop-Leray-fractional-alpha2}.
The proof of the above result can be accomplished by following line by line  the proof of \cite[Proposition 3.3]{BCF_2013}, which worked for both types of boundary conditions but only for $\alpha \in (0,\frac12)$.
\end{proof}

\bigskip

Now, for any $-\infty\leq a<b\leq \infty$ and for any two reflexive Banach spaces $X$ and $Y$, such that $X \embed Y$ continuously, we denote by $W^{1,2}(a,b;X,Y)$ the space of all $u\in L^2(a,b;X)$ which are weakly differentiable as  $Y$-valued functions and their weak derivative
belongs to $L^2(a,b;Y)$. The space $W^{1,2}(a,b;X,Y)$  is a separable Banach space (and Hilbert if both $X$ and $Y$ are Hilbert spaces), endowed with the natural norm
$$\vert u\vert_{W^{1,2}(a,b;X,Y)}^2= \vert u\vert^2_{ L^2(a,b;X)}+\vert u^\prime\vert^2_{ L^2(a,b;Y)},\;\; u\in W^{1,2}(a,b;X,Y).
$$
Later on, when no ambiguity is possible, we will use  the shortcut notation
\[W^{1,2}(a,b)=W^{1,2}(a,b;D(\rA),\rH).\]

\medskip

\begin{definition}
\label{def-very weak}
Assume that $-\infty \leq a < b\leq \infty$ and $f\in L^2_{\textrm{loc}}(a,b;\rH)$. A function $u\in C((a,b);\rH)$ is called a {\em very weak solution} to the Navier-Stokes equations \eqref{eqn_NSE01} on the interval $(a,b)$ if for all $\phi \in C^\infty ((a,b)\times D)$, such that $\text{div} \phi =0$ on $(a,b)\times  D$,
\begin{equation}
\label{eqn-very weak}
\begin{array}{l}
\ds{\int_D u(t_1,\xi) \phi(t_1,\xi) d\xi= \int_D u(t_0,\xi) \phi(t_0,\xi) d\xi}\\
\vs\\
\ds{+\int_{[t_0,t_1]\times D} u(s,\xi) (\partial_s\phi(s,\xi)
+\Delta \phi(s,\xi))\, ds d\xi} \\
\vs\\
\ds{+ \int_{t_0}^{t_1} b(u(s),u(s),\phi(s))\, ds + \int_{[t_0,t_1]\times D}  f(s,\xi) \cdot \phi(s,\xi) \, ds d\xi,}
\end{array}
\end{equation}
 for all $a<t_0<t_1<b$.

\end{definition}

By adapting some  of the results from \cite{Lions+M_2001} to the $2$-dimensional case, it is possible   to prove the following result.

\begin{proposition}\label{prop-very weak}
Assume that $-\infty \leq a < b\leq \infty$ and $f\in L^2_{\textrm{loc}}((a,b);\rH)$. Suppose that the functions $u, v\ \in\, C((a,b);\rH)$ are very weak solutions to the Navier-Stokes equations \eqref{eqn_NSE01} on the interval $(a,b)$, with   $u(t_0)=v(t_0)$,  for some $t_0\in (a,b)$. Then $u(t)=v(t)$ for all $t\geq t_0$.
\end{proposition}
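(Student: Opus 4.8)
Set $w = u - v$. The goal is to show that $w(t_0) = 0$ forces $w(t) = 0$ for $t \geq t_0$. First I would note that both $u$ and $v$ lie in $C((a,b);\rH)$ and are very weak solutions, so the difference $w$ satisfies, in a suitably weak sense, the linearized equation
\[
w' + \rA w + \rB(u,u) - \rB(v,v) = 0, \qquad w(t_0) = 0.
\]
Using the bilinearity of $\rB$, the nonlinear difference can be rewritten in the standard symmetric form
\[
\rB(u,u) - \rB(v,v) = \rB(w,u) + \rB(v,w),
\]
which is the identity that makes the Navier--Stokes uniqueness argument work.

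The heart of the matter is to derive a differential inequality for $\tfrac12 \tfrac{d}{dt}|w(t)|_\rH^2$. Formally, testing the equation for $w$ against $w$ itself and using the cancellation property $\lb \rB(v,w), w\rb = 0$ from \eqref{eqn:b02}, I obtain
\[
\frac12 \frac{d}{dt} |w(t)|_\rH^2 + |w(t)|_\rV^2 = -\,b(w(t), u(t), w(t)).
\]
I would then estimate the trilinear term using the last inequality in \eqref{eqn:4.0a}, namely $|b(w,u,w)| \leq c\,|w|_\rH^{1/2}|\nabla w|_\rH^{1/2}|\nabla u|_\rH |w|_\rH^{1/2}|\nabla w|_\rH^{1/2} = c\,|w|_\rH |\nabla w|_\rH |\nabla u|_\rH$. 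An application of Young's inequality in the form $ab \leq \tfrac12 a^2 + \tfrac12 b^2$ absorbs the factor $|\nabla w|_\rH = |w|_\rV$ into the dissipative term $|w|_\rV^2$ on the left-hand side, leaving
\[
\frac12 \frac{d}{dt} |w(t)|_\rH^2 \leq \frac{c^2}{2}\, |u(t)|_\rV^2\, |w(t)|_\rH^2.
\]
Since $u \in L^2_{\mathrm{loc}}$ with values in $\rV$ (this is the regularity guaranteed for solutions of the skeleton equation), the coefficient $t \mapsto |u(t)|_\rV^2$ is locally integrable, so Gronwall's lemma with $w(t_0)=0$ yields $|w(t)|_\rH^2 = 0$ for all $t \geq t_0$, which is the claim.

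\textbf{The main obstacle is rigor: the formal energy identity above is not immediately justified for merely very weak solutions.} A very weak solution lives only in $C((a,b);\rH)$ and need not have enough regularity to be used as its own test function, nor to guarantee that $\tfrac{d}{dt}|w|_\rH^2$ exists classically. This is precisely why the proposition is attributed to an adaptation of the arguments of Lions and Masmoudi \cite{Lions+M_2001}: the rigorous route is to test the difference equation against smooth divergence-free $\phi$ as in Definition \ref{def-very weak}, then approximate $w$ by a sequence of admissible test functions (e.g. via a mollification in time together with Galerkin/spectral truncation in space adapted to the eigenbasis of $\rA$), controlling the commutator errors that arise in the nonlinear term. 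The delicate point is that the trilinear estimate must survive the limit, which in two dimensions is exactly where the favorable scaling of \eqref{eqn:4.0a} — giving $|\nabla w|_\rH$ to only the first power so that it can be absorbed — is essential. I would therefore present the energy computation as the conceptual skeleton and invoke the approximation machinery of \cite{Lions+M_2001} to legitimize the integration by parts and the cancellation $\lb \rB(v,w),w\rb = 0$ at the level of very weak solutions.
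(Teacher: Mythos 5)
The paper does not actually prove Proposition \ref{prop-very weak}: it only remarks that the result follows ``by adapting some of the results from \cite{Lions+M_2001} to the $2$-dimensional case'', so there is no internal argument to compare with yours step by step. Measured against what the proposition asserts, however, your proof has a genuine gap. The energy computation requires $u$ (and hence $w=u-v$) to lie in $L^2_{\mathrm{loc}}((a,b);\rV)$: without this, the dissipation term $|w|_\rV^2$, the trilinear term $b(w,u,w)$ and the Gronwall coefficient $|u(t)|_\rV^2$ are not merely hard to justify --- they are undefined. A very weak solution in the sense of Definition \ref{def-very weak} is only assumed to be in $C((a,b);\rH)$, and your parenthetical claim that $u\in L^2_{\mathrm{loc}}(\rV)$ ``is the regularity guaranteed for solutions of the skeleton equation'' imports the regularity of the \emph{unique energy-class solution} of \cite[Theorems III.3.1/2]{Temam_2001}, which is exactly what one is not entitled to assume here. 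The whole content of the proposition is that a merely $\rH$-continuous very weak solution must coincide with that regular solution; once that identification is made there is nothing left to prove, and before it is made the energy identity cannot even be written down. For the same reason the proposed rigorization (mollification in time, spectral truncation, commutator control) cannot close the gap: no approximation scheme manufactures the missing $L^2_{\mathrm{loc}}(\rV)$ bound out of $C(\rH)$ regularity alone.

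What you have written is the classical Lions--Prodi two-dimensional weak--strong uniqueness argument, valid within the energy class, whereas the argument the authors invoke from \cite{Lions+M_2001} is of a different nature: one works with the mild (Duhamel) formulation in the critical space $C([t_0,T];L^2(\mathbb{T}^2,\mathbb{R}^2))$ and runs a perturbation estimate around the Stokes semigroup in auxiliary spaces, exploiting the smoothing of $\ex{-t\rA}$ rather than testing the equation against the solution itself. A complete write-up should either carry out that adaptation, or first prove a regularity lemma showing that every very weak solution in $C((a,b);\rH)$ with $f\in L^2_{\mathrm{loc}}((a,b);\rH)$ belongs to $L^2_{\mathrm{loc}}((a,b);\rV)$ and satisfies the energy equality on $[t_0,b)$; after such a lemma your Gronwall argument applies verbatim. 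As it stands, the key step is assumed rather than proved.
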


\medskip

\begin{definition}
\label{def-action}
Assume that $-\infty \leq a < b\leq \infty$. Given a  function $u\in C((a,b);\rH)$ we say that
\[u^\prime+ \rA u +\rB(u,u)\ \in\, L^2(a,b;\rH),\ \ \  (\text{resp.}\  \in\, L^2_{\textrm{loc}}((a,b);\rH))\]
 if there exists $f\in L^2(a,b;\rH)$, (resp. $f \in L^2_{\textrm{loc}}((a,b);\rH)$) such that $u$ is a very weak solution of the Navier-Stokes equations \eqref{eqn_NSE01} on the interval $(a,b)$.

Obviously, the corresponding function $f$ is unique and we will denote it by $\mathcal{H}(u)$, i.e.
\begin{eqnarray}
\label{eqn-A3}
[\cH(u)](t)&:=&u^\prime(t)+\rA u(t)+\rB(u(t),u(t)), \;\; t \in (a,b).
\end{eqnarray}
\end{definition}

In \cite[Proposition 10.2]{BCF_2013} we have proved the following result.
\begin{lemma}
\label{lemma4.2} Assume that  $\alpha \in (0,\frac12)$.
Assume that $u \in\,C((-\infty,0];\rH)$ is such that
${\mathcal H}(u):=u^\prime+\rA u+\rB(u,u) \in\,L^2(-\infty,0;D(\rA^{\frac \a 2}))$ and there exists $\{t_n\}\downarrow -\infty$, such that
\[\lim_{n\to \infty}|u(t_n)|_{\rH}=0.\]
Then $u \in\,W^{1,2}(-\infty,0;D(\rA^{1+\frac{\a}2}),D(\rA^{\frac{\a}2}))$, $u(0) \in\,D(\rA^{\frac{\a+1}2})$ and
\[\lim_{t\to-\infty}|u(t)|_{D(\rA^{\frac{\a}2+\frac12})}=0.\]

\end{lemma}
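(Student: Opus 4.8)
The plan is to bootstrap the regularity of $u$ through three successive energy levels, namely $\rH$, $\rV=D(\rA^{1/2})$ and finally $D(\rA^{\frac{\alpha+1}{2}})$, anchoring each estimate at $-\infty$ by means of the hypothesis $\lim_n\vert u(t_n)\vert_\rH=0$. Throughout, the formal energy identities obtained by testing the equation \eqref{eqn-A3} against $\rA u$ and $\rA^{\alpha+1}u$ are legitimate because, once we know that $u(s)$ lies in the relevant fractional space for some $s<0$ (which holds for a.e. $s$ by the integrability gained at the previous level), Proposition \ref{prop-NSEs-global} upgrades $u$ to a strong solution on $[s,0]$, and Proposition \ref{prop-very weak} guarantees it coincides with the given very weak solution. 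A recurring device is the following: whenever a norm $t\mapsto N(t)^2$ is shown to be integrable on $(-\infty,0)$, there is a sequence $s_n\downarrow-\infty$ with $N(s_n)\to0$, which lets us push the initial time in the Gronwall estimates down to $-\infty$.

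First, from \eqref{cb23} and $\vert u\vert_\rV^2\ge\lambda_1\vert u\vert_\rH^2$ one gets $\frac{d}{dt}\vert u\vert_\rH^2+\lambda_1\vert u\vert_\rH^2\le\lambda_1^{-1}\vert f\vert_\rH^2$ with $f:=\cH(u)$; integrating the Gronwall inequality from $t_n$ to $t$ and using $\vert u(t_n)\vert_\rH\to0$ yields $\vert u(t)\vert_\rH^2\le\lambda_1^{-1}\int_{-\infty}^t \ex{-\lambda_1(t-r)}\vert f(r)\vert_\rH^2\,dr$, whence $u\in L^\infty(-\infty,0;\rH)$, $\vert u(t)\vert_\rH\to0$ as $t\to-\infty$ (since $\vert f\vert_\rH^2\in L^1$), and, integrating the energy inequality, $u\in L^2(-\infty,0;\rV)$. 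At the next level I test against $\rA u$: here the nonlinear term drops out thanks to the orthogonality \eqref{eqn-A-B}, so $\frac{d}{dt}\vert u\vert_\rV^2+\vert\rA u\vert_\rH^2\le\vert f\vert_\rH^2$. Anchoring at a sequence $s_n\downarrow-\infty$ with $\vert u(s_n)\vert_\rV\to0$ (available since $u\in L^2(-\infty,0;\rV)$) gives, exactly as before, $u\in L^\infty(-\infty,0;\rV)$, $\vert u(t)\vert_\rV\to0$, and $u\in L^2(-\infty,0;D(\rA))$.

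The main step is the passage to the level $D(\rA^{\frac{\alpha+1}{2}})$. Testing against $\rA^{\alpha+1}u$ gives $\frac12\frac{d}{dt}\vert u\vert_{D(\rA^{\frac{\alpha+1}2})}^2+\vert u\vert_{D(\rA^{1+\frac\alpha2})}^2=\langle f,\rA^{\alpha+1}u\rangle-\langle\rB(u,u),\rA^{\alpha+1}u\rangle$. The $f$-term is bounded by $\frac14\vert u\vert_{D(\rA^{1+\frac\alpha2})}^2+C\vert f\vert_{D(\rA^{\frac\alpha2})}^2$. For the nonlinear term, which no longer vanishes, I use Proposition \ref{prop-Leray-fractional-alpha} with the choice $s=2-\alpha\in(1,2]$ to get $\vert\rB(u,u)\vert_{D(\rA^{\frac\alpha2})}\le c\,\vert u\vert_{D(\rA^{\frac{s}2})}\vert u\vert_{D(\rA^{\frac{\alpha+1}2})}$, and then the interpolation (moment) inequalities $\vert u\vert_{D(\rA^{\frac{s}2})}\le\vert u\vert_\rV^{1-\sigma}\vert u\vert_{D(\rA)}^{\sigma}$ with $\sigma=s-1=1-\alpha$ and $\vert u\vert_{D(\rA^{\frac{\alpha+1}2})}\le\vert u\vert_\rV^{1-\alpha}\vert u\vert_{D(\rA)}^{\alpha}$. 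Multiplying, the exponents of $\vert u\vert_{D(\rA)}$ add up to exactly $\sigma+\alpha=1$, so that $\vert\langle\rB(u,u),\rA^{\alpha+1}u\rangle\vert\le c\,\vert u\vert_\rV\,\vert u\vert_{D(\rA)}\,\vert u\vert_{D(\rA^{1+\frac\alpha2})}\le\frac14\vert u\vert_{D(\rA^{1+\frac\alpha2})}^2+C\vert u\vert_\rV^2\vert u\vert_{D(\rA)}^2$. Since $\vert u\vert_\rV\in L^\infty$ and $\vert u\vert_{D(\rA)}^2\in L^1$ from the previous level, the resulting differential inequality reads $\frac12\frac{d}{dt}Y+\frac12\vert u\vert_{D(\rA^{1+\frac\alpha2})}^2\le g$, where $Y:=\vert u\vert_{D(\rA^{\frac{\alpha+1}2})}^2$ and $g:=C\vert f\vert_{D(\rA^{\frac\alpha2})}^2+C\vert u\vert_\rV^2\vert u\vert_{D(\rA)}^2\in L^1(-\infty,0)$.

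Finally, using $\vert u\vert_{D(\rA^{1+\frac\alpha2})}^2\ge\lambda_1 Y$ and anchoring the Gronwall estimate at a sequence along which $Y\to0$ (available since $Y\le C\vert u\vert_{D(\rA)}^2\in L^1$), I obtain $Y(t)\le2\int_{-\infty}^t \ex{-\lambda_1(t-r)}g(r)\,dr$, which tends to $0$ as $t\to-\infty$ because $g\in L^1$; this is the claimed decay $\vert u(t)\vert_{D(\rA^{\frac{\alpha+1}2})}\to0$. Integrating the same inequality over $(-\infty,0)$ gives $Y(0)<\infty$, i.e. $u(0)\in D(\rA^{\frac{\alpha+1}2})$, together with $u\in L^2(-\infty,0;D(\rA^{1+\frac\alpha2}))$. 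The missing time-derivative bound then follows from $u'=f-\rA u-\rB(u,u)$: the three terms lie in $L^2(-\infty,0;D(\rA^{\frac\alpha2}))$ respectively by hypothesis, by $\vert\rA u\vert_{D(\rA^{\frac\alpha2})}=\vert u\vert_{D(\rA^{1+\frac\alpha2})}\in L^2$, and by $\vert\rB(u,u)\vert_{D(\rA^{\frac\alpha2})}\le c\vert u\vert_{D(\rA)}\vert u\vert_{D(\rA^{\frac{\alpha+1}2})}\in L^2$ (a product of an $L^2$ and an $L^\infty$ function), which yields $u\in W^{1,2}(-\infty,0;D(\rA^{1+\frac\alpha2}),D(\rA^{\frac\alpha2}))$. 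I expect the only delicate point to be the nonlinear estimate of the third step: everything hinges on the fact that the $L^\infty(\rV)\cap L^2(D(\rA))$ control obtained at the second level is, after the choice $s=2-\alpha$, exactly enough to make the forcing $g$ integrable.
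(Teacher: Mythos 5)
Your proof is correct, and it follows essentially the same three-level bootstrap that the paper carries out in Appendix \ref{section-negative} (Propositions \ref{prop_BCF_2013-infty}--\ref{prop_BCF_2013-inftythree}, which reproduce and extend the proof of this lemma from \cite{BCF_2013}): anchor the $\rH$-energy at the sequence $t_n$, upgrade to $\rV$ and $L^2(D(\rA))$, then test against $\rA^{\alpha+1}u$ and close with a Gronwall argument anchored at a sequence produced by the integrability gained at the previous level. Two of your local choices differ from the paper's and are worth noting. At the intermediate level you kill the convection term outright via the periodic-boundary identity \eqref{eqn-A-B}; this is legitimate on the torus (the setting of the lemma) and shortcuts the trilinear estimate that \cite{BCF_2013} needs in order to also cover Dirichlet conditions. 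At the top level, the paper's version bounds the nonlinearity by $c\,\vert u\vert_{D(\rA^{s/2})}^2\,Y$ with $Y=\vert u\vert^2_{D(\rA^{(\alpha+1)/2})}$ and pays for it with an exponential Gronwall factor $\exp\bigl(c\int\vert u\vert^2_{D(\rA^{s/2})}\bigr)$, whereas you interpolate both factors of the product estimate from Proposition \ref{prop-Leray-fractional-alpha} (with $s=2-\alpha$) down to $\rV$ and $D(\rA)$, so that the exponents sum exactly to one and the forcing $\vert u\vert^2_\rV\vert u\vert^2_{D(\rA)}$ is independent of $Y$; this turns the differential inequality into a linear one, $\tfrac{d}{dt}Y+\lambda_1 Y\le 2g$ with $g\in L^1$, and makes the decay and the $L^2(D(\rA^{1+\alpha/2}))$ bound immediate. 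The trade-off is that your trick exploits $\alpha<1$ (so that $2-\alpha\in(1,2]$ is admissible in Proposition \ref{prop-Leray-fractional-alpha}), while the paper's exponential-Gronwall version is the one that survives for $\alpha>1$ in Proposition \ref{prop_BCF_2013-inftythree}. Your justification of the energy identities via Proposition \ref{prop-NSEs-global}, Proposition \ref{prop-very weak} and Lemma \ref{lem-clar-2} matches the paper's treatment.
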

In Appendix \ref{section-negative}, we generalize the above result, again only in the case of NSEs on a torus, to the case of any $\alpha >0$.

\bigskip

\section{LDP for stochastic NSEs on a 2-D torus}\label{sec-action}

For any fixed $\eps \in (0,1]$ and $x \in\,\rH$, we consider the problem
\begin{equation}
\label{eqn-SNSE-eps}
du(t)+\rA u(t)+\rB(u(t),u(t))=\sqrt{\eps}\,dw(t),\ \ \ \ u(0)=x.
\end{equation}
Here $w=\big\{w(t)\big\}_{t\geq 0}$ is an $\rH$-valued  Wiener process with  reproducing kernel Hilbert space denoted by $\rK$. In particular $\rK \subset \rH$ and the natural embedding  $\rK \embed \rH$ is $\gamma$-radonifying (i.e. in this case Hilbert-Schmidt). Let us fix a complete orthonormal system  $\{f_k\}_{k \in\,\mathbb{N}}$ of $\rK$ and a sequence $\big\{\beta_k\big)\}_{k=1}^\infty $ of independent Brownian motions defined on some filtered probability space $\big(\Omega, \mathcal{F}, \mathbb{F},\mathbb{P}\big)$, where $\mathbb{F}=\big\{\mathcal{F}_t\big\}_{t\geq 0}$, such that the Wiener process $w$ has the following representation
\begin{equation}
\label{eqn-W}
w(t)=\sum_{k=1}^\infty   \beta_k(t)f_k,\ \ \ \ t\geq 0.
\end{equation}
Together with the Wiener process $w$, one can associate a covariance operator $C$, usually denoted by $Q$,  defined by
\[
\lb Ch_1,h_2\rb = \mathbb{E} \big[ \lb h_1, w(1)\rb_\rH \, \lb w(1),h_2 \rb_\rH \big], \;\; h_1,h_2 \in \rH.
\]
It is well known that $C$ is a self-adjoint and trace class operator in $\rH$, see for instance the monograph \cite{DaPrato+Z_1992}. If $i:\rK \embed \rH$ is the natural embedding, then $C=i i^\ast$ and $\rK=R(C^{\frac12})$, see for instance \cite{Brz+vN_1999}.
In this paper we consider very special type of Wiener process for which the  operator $Q:=C^{\frac12}$ is an isomorphism of $\rH$ onto $D(\rA^{\frac{\alpha}{2}})$ for some $\alpha>1$. In other words, when $\rK=D(\rA^{\frac{\alpha}{2}})$, for some $\alpha>1$.

Since $C$ is trace class, we infer that
 \[\sum_{k=1}^\infty |Qf_k|_{\rH}^2=\text{Tr}\,[Q Q^\star]<\infty.\]

\delc{ given by
\[W(t)=\sum_{k=1}^\infty Q e_k \beta_k(t),\ \ \ \ t\geq 0,\]
where $\{e_k\}_{k \in\,\mathbb{N}}$ is the basis which diagonalizes the operator $\rA$, $\{\beta_k(t)\}_{k \in \mathbb{N}}$ is a sequence of independent Brownian motions, all defined on the same stochastic basis $(\Omega, \mathcal {F}, \mathcal {F}_t, \mathbb{P})$, and $Q$ is a positive bounded linear operator on $\rH$. We assume here that %
 $Q: \rH \to D(\rA^{\frac{\alpha}{2}})$  is an isomorhism, for some $\alpha>1$. In particular, as $\a_k\sim k$, we have that $Q \in\,{\mathcal L}_2(\rH)$, that is, for any complete orthonormal system $\{f_k\}_{k \in\,\mathbb{N}}$ in $\rH$
 \[\sum_{k=1}^\infty |Qf_k|_{\rH}^2=\text{Tr}\,[Q Q^\star]<\infty.\]
 This implies that $W(t) \in\,L^2(\Omega;\rH)$, $t\geq 0$.
}

\medskip

As proven e.g. in \cite{Flandoli_1994}, for any $T>0$, $\eps \in\,(0,1]$ and $x \in\,\rH$, equation \eqref{eqn-SNSE-eps} admits a
unique solution $u^x_{\eps} \in L^p(\Omega;C([0,T];\rH))$, for any $p\geq 1$. Moreover, there exists an  invariant measure
$\nu_\eps$ for the Markov process generated by equation \eqref{eqn-SNSE-eps}, i.e. a Borel probability measure on $\rH$ such that
for every bounded a continuous function $\varphi:\rH\to\mathbb{R}$ and every $t\geq 0$,
\begin{equation}
\label{eqn-invariant measure}
 \int_{\rH}  \mathbb{E}\,\varphi(u^x_\eps(t)) \,\nu_\eps(dx) = \int_{\rH}\varphi(x)\,\nu_\eps(dx).
\end{equation}

 As it is known, see \cite{Ferrario_1999}, for every $\eps>0$
 this invariant measure $\nu_\eps$ is unique and ergodic. Thus is particular,  for any  bounded Borel measurable   function $\varphi:\rH\to\mathbb{R}$,

\begin{equation}
\label{eqn-ergodicity}
\int_{\rH}\varphi(x)\,\nu_\eps(dx)=\lim_{T\to\infty}\frac 1T\int_0^T \mathbb{E}\,\varphi(u^0_\eps(t))\,dt.
\end{equation}

\medskip

Now, let $u \in\,C([a,b];\rH)$, with $-\infty<a<b<+\infty$.  If
\[\mathcal{H}(u):= u^\prime+\rA u +\rB(u,u)  \in L^2_{\textrm{loc}}((a,b);D(A^{\frac{\alpha}{2}})),\]
 then for any   $a<t_0<t_1<b$ we define
\begin{eqnarray}
\label{eqn-B5}
S_{t_0,t_1}(u)&:=&\frac12 \int_{t_0}^{t_1}\vert Q^{-1} \cH(u)(t)\vert_{\rH}^2\, dt.
\end{eqnarray}
Note that since $Q$ is a bounded operator in $\rH$, we have the following useful inequality
\begin{equation}
\label{ineq-useful}
\int_{t_0}^{t_1}\vert  \cH(u)(t)\vert_{\rH}^2\, dt  \leq 2 \Vert Q\Vert_{\mathcal{L}(\rH,\rH)}^2 S_{t_0,t_1}(u).
\end{equation}

If $\mathcal{H}(u) \notin L^2_{\textrm{loc}}((a,b);D(A^{\frac{\alpha}{2}})$, we define
$S_{t_0,t_1}(u)=+\infty$.
Moreover, for every $T>0$, we denote
\[S_{-T}:=S_{-T,0},\ \ \ \ S_T:=S_{0,T}.\] In particular, when $a=-\infty$ and $b\geq 0$, we set
\begin{eqnarray}
\label{eqn-A5}
S_{-\infty}(u)&:=&\frac12 \int_{-\infty}^0\vert Q^{-1}\cH(u)(t)\vert_{\rH}^2\, dt.
\end{eqnarray}

An obvious sufficient condition for the finiteness of  $S_{t_0,t_1}(u)$ is that $u^\prime$, $Au$  and $B(u,u)$ all belong to $L^2(t_0,t_1;D(A^{\frac{\alpha}{2}})$. In fact, as we  proved in \cite[Lemma 3.9]{BCF_2013}, in the case of 2-D NSEs with both Dirichlet and periodic boundary conditions, when  $\alpha \in (0,\frac12)$,  this is not so far from being a necessary condition. As earlier for Proposition \ref{prop-NSEs-global}, it turns out that in the latter case, \cite[Lemma 3.9]{BCF_2013} holds  true  for any $\alpha\geq 0$.

\begin{lemma}\label{lem-clar-2}
Suppose that $\a\geq 0$ and  $-\infty<a<b<\infty$. \\If a function  $u\in C([a,b];\rH)$ satisfies
\[
u^\prime+\rA u+\rB(u,u)\in L^2(a,b;D(\rA^{\frac{\alpha}2})),\]
then $u(b)\in D(\rA^{\frac{\alpha+1}2})$ and $ u\in W^{1,2}\big(t_0,b;D(\rA^{\frac{\alpha}2+1}),D(\rA^{\frac{\alpha}2})\big)$,
for any $t_0\in (a,b)$.\\
Moreover, if $u(a)\in D(\rA^{\frac{\alpha+1}2})$, then $u\in W^{1,2}\big(a,b;D(\rA^{\frac{\alpha}2+1}),D(\rA^{\frac{\alpha}2})\big)$.
\end{lemma}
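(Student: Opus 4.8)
The plan is to run a finite bootstrap in the order of regularity, starting from the variational solution and gaining a fixed positive amount of smoothness at each step by treating the nonlinear term $\rB(u,u)$ as a forcing and invoking the parabolic smoothing of the Stokes operator. First I would reduce to the interior statement. Since $D(\rA^{\frac\alpha2})\hookrightarrow \rH$, the hypothesis gives $f:=\cH(u)\in L^2(a,b;\rH)$, so $u$ is a very weak solution with $\rH$-valued forcing; by the uniqueness Proposition \ref{prop-very weak} it coincides with the variational solution of \eqref{eqn_NSE01} with datum $u(a)\in\rH$. In particular $u\in L^2(a,b;\rV)$ with $u^\prime\in L^2(a,b;\rV^\prime)$, and the weighted estimate \eqref{brc82} (which needs only $f\in L^2(a,b;\rH)$) yields, for every $t_0\in(a,b)$, that $u\in L^2(t_0,b;D(\rA))\cap L^\infty(t_0,b;\rV)$ and $u^\prime\in L^2(t_0,b;\rH)$, i.e. $u\in W^{1,2}(t_0,b;D(\rA),\rH)$. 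Via the trace embedding $W^{1,2}(t_0,b;D(\rA),\rH)\hookrightarrow C([t_0,b];D(\rA^{\frac12}))$ together with \eqref{eqn-fractional}, this gives $u\in C([t_0,b];\rV)$. This is the assertion at index $\gamma=0$.

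Next I would set up the induction. Fix a finite increasing sequence $0=\gamma_0<\gamma_1<\cdots<\gamma_N=\alpha$ and prove by induction on $k$ that for every $t_0\in(a,b)$ one has $u\in W^{1,2}(t_0,b;D(\rA^{1+\frac{\gamma_k}2}),D(\rA^{\frac{\gamma_k}2}))$ and $u\in C([t_0,b];D(\rA^{\frac{\gamma_k+1}2}))$. For the inductive step pick $t_0<t_0^\prime<b$; the level-$\gamma_k$ regularity gives $u\in L^\infty(t_0^\prime,b;D(\rA^{\frac{\gamma_k+1}2}))\cap L^2(t_0^\prime,b;D(\rA^{\frac{\gamma_k+2}2}))$. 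Using Proposition \ref{prop-Leray-fractional-alpha} when $\gamma_{k+1}\le1$ and Proposition \ref{prop-Leray-fractional-alpha2} when $\gamma_{k+1}>1$, I write $|\rB(u,u)|_{D(\rA^{\gamma_{k+1}/2})}\le c\,|u|_{D(\rA^{s/2})}|u|_{D(\rA^{(\gamma_{k+1}+1)/2})}$ and distribute the two factors between $L^\infty_t$ and $L^2_t$: the lower-order factor is controlled in $L^\infty_t$, while the higher-order factor is interpolated by means of \eqref{eqn-fractional} and Hölder's inequality in time between the available $L^\infty_t$ and $L^2_t$ bounds. The product lands in $L^2(t_0^\prime,b;D(\rA^{\frac{\gamma_{k+1}}2}))$ precisely when the step $\gamma_{k+1}-\gamma_k$ stays below a threshold dictated by the admissible range $s\in(1,2]$; this threshold is strictly positive (it degenerates only as $\gamma_k\downarrow0$, so the first steps must be small), which is exactly why finitely many steps suffice to reach $\alpha$. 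Consequently $f-\rB(u,u)\in L^2(t_0^\prime,b;D(\rA^{\frac{\gamma_{k+1}}2}))$, and feeding this forcing, together with the datum $u(t_0^\prime)\in D(\rA^{\frac{\gamma_k+1}2})$, into the linear Stokes equation $u^\prime+\rA u=f-\rB(u,u)$, the same smoothing mechanism behind \eqref{brc82} and Proposition \ref{prop-NSEs-global} upgrades $u$, away from $t_0^\prime$, to $u\in W^{1,2}(t_0^{\prime\prime},b;D(\rA^{1+\frac{\gamma_{k+1}}2}),D(\rA^{\frac{\gamma_{k+1}}2}))$ and $u\in C([t_0^{\prime\prime},b];D(\rA^{\frac{\gamma_{k+1}+1}2}))$ for $t_0^{\prime\prime}\in(t_0^\prime,b)$. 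Since $t_0^\prime$ may be taken arbitrarily close to $t_0$, relabeling closes the induction at level $\gamma_{k+1}$; after $N$ steps the first assertion follows, and $u(b)\in D(\rA^{\frac{\alpha+1}2})$ is read off from the trace embedding evaluated at the endpoint $b$.

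For the final clause, if in addition $u(a)\in D(\rA^{\frac{\alpha+1}2})$, then no bootstrap is needed: $u$ is the very weak solution of \eqref{eqn_NSE01} with forcing $f\in L^2(a,b;D(\rA^{\frac\alpha2}))$ and datum $u(a)\in D(\rA^{\frac{\alpha+1}2})$, so by Proposition \ref{prop-NSEs-global} and the uniqueness Proposition \ref{prop-very weak} it lies in $L^2(a,b;D(\rA^{1+\frac\alpha2}))\cap C([a,b];D(\rA^{\frac{\alpha+1}2}))$ with $u^\prime\in L^2(a,b;D(\rA^{\frac\alpha2}))$, i.e. $u\in W^{1,2}(a,b;D(\rA^{\frac\alpha2+1}),D(\rA^{\frac\alpha2}))$, up to and including the left endpoint.

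The main obstacle I anticipate lies in the inductive nonlinear estimate: one must verify that the time-interpolation of the $L^\infty_t$–$L^2_t$ bounds, combined with the precise range $s\in(1,2]$ in Proposition \ref{prop-Leray-fractional-alpha} (and its $\alpha>1$ counterpart), genuinely places $\rB(u,u)$ in $L^2_t$ valued in $D(\rA^{\frac{\gamma_{k+1}}2})$ with a strictly positive gain $\gamma_{k+1}-\gamma_k$, and in keeping careful track of the successive interior shifts $t_0<t_0^\prime<t_0^{\prime\prime}$ so that the Stokes smoothing is legitimately applied on each subinterval. This is where the stronger, torus-specific nonlinearity bounds of Propositions \ref{prop-Leray-fractional-alpha} and \ref{prop-Leray-fractional-alpha2} replace the limitation $\alpha\in(0,\tfrac12)$ of \cite[Lemma 3.9]{BCF_2013} and allow the argument to close for every $\alpha\ge0$.
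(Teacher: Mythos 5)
Your proof is correct and follows essentially the route the paper intends: the paper only sketches this lemma by referring to \cite[Lemma 3.9]{BCF_2013} together with Propositions \ref{prop-Leray-fractional-alpha} and \ref{prop-Leray-fractional-alpha2}, and your bootstrap in the fractional order (base interior regularity from \eqref{brc82}, then finitely many steps of bounded size using the nonlinear estimates and parabolic smoothing, with Proposition \ref{prop-NSEs-global} handling the case $u(a)\in D(\rA^{\frac{\alpha+1}2})$) is exactly the induction the authors carry out explicitly in Appendix \ref{section-negative} for the half-line analogue. The only quibble is cosmetic: the admissible step size does not actually degenerate as $\gamma_k\downarrow 0$ (any step below $1$ works there too), but this does not affect the argument.
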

\begin{proof}
As discussed in Remark \ref{rem-form B}, the above result follows from Propositions
\ref{prop-Leray-fractional-alpha}  and \ref{prop-Leray-fractional-alpha2}.
The proof of the above result can be accomplished by following the line of  proof of \cite[Lemma 3.9]{BCF_2013} which worked for both types of boundary conditions but only for $\alpha \in (0,\frac12)$.
\end{proof}

\medskip

As  a  consequence of the contraction principle and of some continuity properties of the solution of equation \eqref{eqn-SNSE-eps} proven in \cite{Brz+Li_2006}, in \cite[Theorem 5.3]{BCF_2013} we have shown the following result.
\begin{theorem}
\label{brc.teo.4.2}
For any $x \in\,\rH$, the family
$\{{\mathcal L}(u^x_{\eps})\}_{\eps \in\,(0,1]}$ satisfies the large deviation principle in $C([0,T];\rH)$, with speed $\eps$ and   action functional $S_{T}$, uniformly with respect to $x$ in bounded subsets of $\rH$.\end{theorem}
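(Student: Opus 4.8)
The plan is to deduce this sample-path LDP from Schilder's theorem for the Gaussian noise together with the contraction principle, exploiting the fact that the noise in \eqref{eqn-SNSE-eps} is \emph{additive}, so that the solution depends on the driving path through a continuous deterministic map. As a first step I would record the Gaussian LDP for the rescaled noise: since $w$ is an $\rH$-valued Wiener process with reproducing kernel Hilbert space $\rK=Q(\rH)$, the law of $\sqrt\eps\,w$ on $C([0,T];\rH)$ is a rescaled centred Gaussian measure, and by the abstract Schilder/Freidlin--Wentzell theorem it satisfies an LDP with speed $\eps$ and good rate function
\[
I(g)=\tfrac12\int_0^T |Q^{-1}g^\prime(s)|_\rH^2\,ds
\]
whenever $g(0)=0$, $g$ is absolutely continuous and $g^\prime\in Q(\rH)$ a.e., and $I(g)=+\infty$ otherwise; here the Cameron--Martin space of the noise is exactly $\{g:g^\prime=Qh,\ h\in L^2(0,T;\rH)\}$.

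Next, because the noise is additive, I would factor the solution map through the Ornstein--Uhlenbeck process. Let $z_\eps$ solve the linear stochastic Stokes equation $dz+\rA z\,dt=\sqrt\eps\,dw$, $z(0)=0$; this is a continuous linear image of $\sqrt\eps\,w$, hence by the contraction principle $\{z_\eps\}$ itself satisfies an LDP in $C([0,T];\rH)$. Setting $v_\eps:=u^x_\eps-z_\eps$, the difference solves, pathwise, the deterministic random PDE
\[
v^\prime+\rA v+\rB(v+z_\eps,v+z_\eps)=0,\qquad v(0)=x,
\]
which, by the well-posedness and continuity results of \cite{Brz+Li_2006} together with the energy estimate \eqref{cb4} and the cancellation \eqref{eqn:b02}, defines a continuous solution map $\Psi_x:z\mapsto u=v+z$ from a suitable path space containing the range of the Ornstein--Uhlenbeck process into $C([0,T];\rH)$. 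Composing, one has $u^x_\eps=\Psi_x(z_\eps)$, so the contraction principle transfers the LDP to the laws $\mathcal{L}(u^x_\eps)$.

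It then remains to identify the rate function and to upgrade the statement to one uniform in $x$. A short computation shows that if $z$ is the Ornstein--Uhlenbeck path driven by a control $f=g^\prime$, then $u=\Psi_x(z)$ solves the skeleton equation $u^\prime+\rA u+\rB(u,u)=f$ with $u(0)=x$; hence by the contraction principle the rate at $u$ equals the infimum of $I(g)=\tfrac12\int_0^T|Q^{-1}g^\prime|_\rH^2\,ds$ over all such controls producing $u$. Since by Proposition \ref{prop-very weak} the control is uniquely recovered as $f=\cH(u)$, this infimum is attained (among paths with $u(0)=x$) and equals $\tfrac12\int_0^T|Q^{-1}\cH(u)(s)|_\rH^2\,ds=S_T(u)$, as claimed. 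Finally, the a priori bound \eqref{cb4} is uniform over $x$ in bounded subsets of $\rH$, so the continuity of $\Psi_x$ is locally uniform in $x$; feeding this into the contraction argument yields the uniform LDP. I expect the genuine work to lie in the second step, namely establishing that the solution map $z\mapsto u$ is continuous, and uniformly so for $x$ in bounded sets and $z$ in compacts, since this is precisely where the two-dimensional structure of the nonlinearity $\rB$ and the sharp estimates \eqref{eqn:4.0a}--\eqref{eqn:4.0} must be invoked.
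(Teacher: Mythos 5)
Your proposal is correct and follows essentially the same route as the paper, which obtains Theorem \ref{brc.teo.4.2} (via \cite[Theorem 5.3]{BCF_2013}) precisely from the contraction principle applied to the continuous solution map of the additively forced equation, with the continuity properties taken from \cite{Brz+Li_2006}. You have also correctly located the real work in the continuity (locally uniform in $x$) of the map $z\mapsto u$ and in the identification of the rate function through the uniqueness of the control $f=\cH(u)$.
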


\medskip

We recall here that a family of probability measures $\{\mu_\eps\}_{\eps>0}$ on some complete metric space $E$ satisfies a large deviation principle, with normalizing constants $\{\beta_\eps\}_{\eps>0}$ such that $\beta(\eps)\downarrow 0$, as $\eps\downarrow 0$, and  action functional $I$, iff  $I$ is a lower-semicontinuous\footnote{This condition is redundant if we also assume condition \eqref{cond-I_r-compact} below.} map from $E$ to $[0,\infty]$ such that
\begin{enumerate}
\item\label{cond-I_r-compact} For each $ r>0$,  the level set
\[I_r:=\left\{x \in\,E\ :\ I(x)\leq r\right\},\ \ \ \ \,\]
is   compact in $E$.
\item {\em Lower bounds:} For every $\bar{x} \in\,E$ and for all  $\delta, \gamma>0$ there exists $\eps_0>0$ such that\footnote{Inequality \eqref{ineq-ld-lb} is trivially satisfied when $I(\bar{x})=\infty$.}
\begin{equation}\label{ineq-ld-lb}\mu_\eps\left(B_E(\bar{x},\delta)\right)\geq \exp\left(-\frac{I(\bar{x})+\gamma}{\beta_\eps}\right),\ \ \ \ \eps\leq \eps_0.
\end{equation}
Here $B_E(\bar{x},\delta)=\{ x\in E: |x-\bar{x}|_{E}<\delta\}$.
\item {\em Upper bounds:} For every $s, \delta$ and $\gamma \in\,(0,s)$ there exists $\eps_0>0$ such that
\begin{equation}\label{ineq-ld-ub}\mu_\eps\left(\{ x\in E: \mbox{dist}_E\left(x, I_s\right)>\delta\}\right)\leq \ex{-\frac{s-\gamma}{\beta_\eps}},\ \ \ \ \eps\leq \eps_0.
\end{equation}

\end{enumerate}
\medskip

Next,  for any $x,y \in\,H$ and $a,b\in \mathbb{R}$, we introduce the following functional spaces
\[\begin{array}{l}
\ds{\mathcal{X}=\big\{ u\in C((-\infty,0];\rH): \; \lim_{t\to-\infty}\vert u(t)\vert_{\rH}=0\big\},}\\
\vs\\
\ds{\mathcal{X}_x=\big\{ u\in \mathcal{X}:u(0)=x\big\},}\\
\vs\\
\ds{C_{x,y}([a,b],\rH)=\{ u \in C([a,b];\rH): u(a)=x,\; u(b)=y\}.}
\end{array}\]
 We endow the space  $\mathcal{X}$ with the topology of uniform convergence on compact intervals, i.e. the topology induced by the metric $\rho$ defined by
$$ \rho(u,v):= \sum_{n=1}^\infty 2^{-n}\left(\sup_{s\in [-n,0]} \vert u(s)-v(s)\vert_{\rH}\wedge 1\right),\;\;\; u,v\in \mathcal{X}.
$$
The set $\mathcal{X}_x$ is a closed in $\mathcal{X}$ and we endow it with the trace topology induced by $\mathcal{X}$.

\medskip

In \cite[Propositions 5.4 and 5.5]{BCF_2013}, we  proved that the functional
\begin{equation}\label{eqn-action functional infty}
S_{-\infty}:\mathcal{X}\to [0,+\infty],
\end{equation}
 is lower-semicontinuous  and has
compact level sets. This result obviously holds for both types of boundary conditions, but only for $\alpha\in [0,\frac12)$. Its proof relied on \cite[Propositions 10.1 and 10.2]{BCF_2013} which we generalize in Appendix \ref{section-negative}. Let us state it for the completeness sake.

\begin{proposition}\label{prop-action-lsc+compact}
Assume that $\alpha \geq 0$.
Then the  functional $S_{-\infty}$ defined by \eqref{eqn-A5}, is  lower-semicontinuous on $\mathcal{X}$. Moreover, its level sets are compact  in $\mathcal{X}$
\end{proposition}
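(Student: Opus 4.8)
The plan is to prove the stronger of the two assertions, namely that each sublevel set $K_r:=\{u\in\mathcal{X}:S_{-\infty}(u)\le r\}$ is compact in $(\mathcal{X},\rho)$; since $\mathcal{X}$ is a metric space, compact sublevel sets are in particular closed, so the lower semicontinuity of $S_{-\infty}$ follows automatically (cf. the footnote after \eqref{cond-I_r-compact}). The argument follows the proof of \cite[Propositions 5.4 and 5.5]{BCF_2013}, the only change being that wherever that proof invokes the results of \cite{BCF_2013} valid only for $\alpha\in(0,\frac12)$, we instead invoke their periodic-case generalizations to all $\alpha\ge 0$, i.e. Lemma \ref{lem-clar-2} and the extension of Lemma \ref{lemma4.2} proved in Appendix \ref{section-negative}. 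Throughout I write $f_u:=\mathcal H(u)$; since $Q$ is an isomorphism of $\rH$ onto $D(\rA^{\frac\alpha2})$, the condition $S_{-\infty}(u)\le r$ is equivalent to $f_u\in L^2(-\infty,0;D(\rA^{\frac\alpha2}))$ with $\int_{-\infty}^0|f_u|^2_{D(\rA^{\frac\alpha2})}\,dt$ bounded by a fixed multiple of $r$, by \eqref{ineq-useful}.

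First I would record uniform a priori bounds on $K_r$. Fix $u\in K_r$. The energy inequality \eqref{cb23} (which rests on the cancellation \eqref{eqn:b02}) gives $\frac{d}{dt}|u|^2_\rH+|u|^2_\rV\le\lambda_1^{-1}|f_u|^2_\rH$; integrating over $(-T,0)$ and letting $T\to\infty$, while using $|u(-T)|_\rH\to 0$ because $u\in\mathcal{X}$, yields a bound on $\int_{-\infty}^0|u|^2_\rV\,dt$ uniform over $K_r$. Next, on each finite interval the generalized Lemma \ref{lem-clar-2} applies: from $f_u\in L^2(-T,0;D(\rA^{\frac\alpha2}))$ and $u\in C([-T,0];\rH)$ one gets $u\in W^{1,2}(t_0,0;D(\rA^{1+\frac\alpha2}),D(\rA^{\frac\alpha2}))$ for every $t_0\in(-T,0)$, and the quantitative estimate behind it (compare \eqref{brc82}) depends only on $|u(t_0)|_\rH$ and on the $L^2$-norm of $f_u$, both controlled uniformly on $K_r$. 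Hence, for every $T$, the set $K_r$ is bounded in $W^{1,2}(-T+1,0;D(\rA^{1+\frac\alpha2}),D(\rA^{\frac\alpha2}))$.

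Now take any sequence $u_n\in K_r$. By the Aubin--Lions--Simon lemma the embedding $W^{1,2}(-T+1,0;D(\rA^{1+\frac\alpha2}),D(\rA^{\frac\alpha2}))\embed C([-T+1,0];\rH)$ is compact, so a subsequence converges in $C([-T+1,0];\rH)$; a diagonal extraction over $T=2,3,\dots$ produces a subsequence (not relabelled) converging to some $u$ uniformly on every compact subinterval of $(-\infty,0]$, i.e. in the metric $\rho$. To see that the limit lies in $K_r$, I would pass to the limit in the equation: after a further extraction $f_{u_n}\rightharpoonup g$ weakly in $L^2(-\infty,0;D(\rA^{\frac\alpha2}))$, while the uniform $W^{1,2}$-bounds give $u_n'\rightharpoonup u'$ and $\rA u_n\rightharpoonup\rA u$ on each $(-T,0)$, and the strong convergence of $u_n$ in $C([-T,0];\rH)$ together with the bounds in $L^2(-T,0;D(\rA^{1+\frac\alpha2}))$ lets one pass to the limit in the quadratic term $\rB(u_n,u_n)$ via the continuity estimates of Propositions \ref{prop-Leray-fractional-alpha}--\ref{prop-Leray-fractional-alpha2}. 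This identifies $g=\mathcal H(u)$, and then weak lower semicontinuity of the $L^2$-norm gives $S_{-\infty}(u)\le\liminf_n S_{-\infty}(u_n)\le r$.

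It remains to verify that $u\in\mathcal{X}$, i.e. $\lim_{t\to-\infty}|u(t)|_\rH=0$; this is the main obstacle, since uniform convergence on compacts alone does not control the behaviour at $-\infty$, and without it one would only obtain relative compactness in a larger space. Here I would use the uniform $L^2(-\infty,0;\rV)$-bound from the first step, which passes to the limit by weak lower semicontinuity to give $\int_{-\infty}^0|u|^2_\rV\,dt<\infty$; consequently there is a sequence $t_k\downarrow-\infty$ with $|u(t_k)|_\rV\to 0$, hence $|u(t_k)|_\rH\to 0$. Since also $\mathcal H(u)\in L^2(-\infty,0;D(\rA^{\frac\alpha2}))$, the generalized version of Lemma \ref{lemma4.2} (Appendix \ref{section-negative}) applies and yields $\lim_{t\to-\infty}|u(t)|_{D(\rA^{\frac{\alpha+1}2})}=0$, in particular $\lim_{t\to-\infty}|u(t)|_\rH=0$, so $u\in\mathcal{X}$ and $u\in K_r$. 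This proves $K_r$ sequentially compact, hence compact, and the lower semicontinuity of $S_{-\infty}$ is then immediate from the closedness of its sublevel sets.
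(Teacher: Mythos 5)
Your argument is correct and is essentially the proof the paper relies on: the paper simply defers to \cite[Propositions 5.4 and 5.5]{BCF_2013}, whose proof proceeds exactly as you do — uniform energy and $W^{1,2}$ bounds on the level set, Aubin–Lions compactness plus diagonal extraction on compact subintervals, weak passage to the limit in the equation and weak lower semicontinuity of the $L^2$-norm, and finally the decay of $|u(t)|_{\rH}$ as $t\to-\infty$ recovered from the uniform $L^2(-\infty,0;\rV)$ bound together with Lemma \ref{lemma4.2}. The only adaptation needed for $\alpha\geq 0$ is to replace the ingredients valid for $\alpha\in(0,\frac12)$ by Lemma \ref{lem-clar-2}, Propositions \ref{prop-Leray-fractional-alpha}--\ref{prop-Leray-fractional-alpha2}, and the Appendix \ref{section-negative} generalization of Lemma \ref{lemma4.2}, which is precisely what you do.
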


\bigskip

Next, we  define the quasi-potential $\rU$ associated with equation \eqref{eqn-SNSE-eps}, by setting
\begin{equation}
\label{eqn-B4-QP}
\begin{array}{l}
\ds{\rU(x):=\inf\big\{S_{T}(u):  T>0,\, u\in C_{0,x}([0,T];\rH)\big\}}\\
\vspace{.1cm}\\
\ds{=\inf\big\{S_{-T}(u):  T>0,\, u\in C_{0,x}([-T,0];\rH)\big\},\;\;\; x\in \rH.}
\end{array}
\end{equation}

In \cite{BCF_2013}, we have thoroughly studied the functional $\rU$ for the 2-D NSEs, for both Dirichelt and periodic boundary conditions, and we have shown that it satisfies the properties described below for $\alpha \in (0,\frac12)$. The following result generalizes  \cite[Theorem 6.2 and Propositions 6.1, 6.5 and 6.6]{BCF_2013} to  $\alpha \geq 0$ but only, as all our generalizations, for the case of the 2-D NSEs on a torus.
\begin{theorem}
\label{teo4.3}
Assume that $\alpha \geq 0$. Then the following properties hold true.
\begin{enumerate}
\item If $x\in \rH$, then
\begin{equation}
\label{brc23}
\rU(x)<\infty \Longleftrightarrow x\in D(\rA^{\frac{\alpha+1}2}).
\end{equation}
\item If    $x \in \rH$, then
\begin{equation}
\label{eqn-A1}
\rU(x):=\inf\big\{S_{-\infty}(u): u\in \mathcal{X}_x\big\}.
\end{equation}
\item For any $r>0$, the level set
\begin{equation}
\label{eqn-K}
K_r=\{x\in \rH: \rU(x)\leq r\}
\end{equation}
is compact in $\rH$. In particular, the function $\rU: \rH \to [0,\infty]$ is lower semi-continuous.
\item The restriction of the map $\rU$ to the set $D(A^{\frac{\alpha+1}2})$, i.e. the map  \[\rU:D(A^{\frac{1+\beta}2})\to \mathbb{R}\] is continuous.
\end{enumerate}
\end{theorem}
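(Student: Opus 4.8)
The plan is to obtain the four assertions by transporting the arguments of \cite[Theorem 6.2 and Propositions 6.1, 6.5, 6.6]{BCF_2013} from the range $\alpha\in(0,\frac12)$ to all $\alpha\ge0$, the point being that on the torus the nonlinear estimates of Propositions \ref{prop-Leray-fractional-alpha} and \ref{prop-Leray-fractional-alpha2}, the global regularity of Proposition \ref{prop-NSEs-global}, and the generalized trace/decay Lemma \ref{lemma4.2} (proved in Appendix \ref{section-negative}) are now available without the restriction $\alpha<\frac12$. I treat the items in the order (1)--(4). For (1), the implication $\rU(x)<\infty\Rightarrow x\in D(\rA^{\frac{\alpha+1}2})$ is immediate: a path $u\in C_{0,x}([-T,0];\rH)$ with $S_{-T}(u)<\infty$ has $\cH(u)\in L^2(-T,0;D(\rA^{\frac\alpha2}))$ and $u(-T)=0\in D(\rA^{\frac{\alpha+1}2})$, so Lemma \ref{lem-clar-2} gives $x=u(0)\in D(\rA^{\frac{\alpha+1}2})$. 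For the converse I would build a finite-action path reaching $x$ in two stages. Solving $y^\prime+\rA y+\rB(y,y)=0$ with $y(0)=x$, Proposition \ref{prop-NSEs-global} gives $y^\prime\in L^2(0,1;D(\rA^{\frac\alpha2}))$ and $y(t)\in D(\rA^{1+\frac\alpha2})$ for a.e. $t$; fixing such a $t=T_0$ and setting $z=y(T_0)$, the time-reversed arc $\tilde u(s)=y(-s)$, $s\in[-T_0,0]$, runs from $z$ to $x$ with $\cH(\tilde u)(s)=-2y^\prime(-s)$, so its action equals $2\Vert Q^{-1}\Vert^2\int_0^{T_0}|y^\prime(r)|_{D(\rA^{\frac\alpha2})}^2\,dr<\infty$; crucially this reaches the merely $D(\rA^{\frac{\alpha+1}2})$-regular endpoint $x$ at finite cost. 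Since $z$ is smooth, the linear segment $w(s)=\phi(s)z$ from $0$ to $z$ has $\cH(w)=\phi^\prime z+\phi\,\rA z+\phi^2\rB(z,z)\in L^2(D(\rA^{\frac\alpha2}))$, the $\rB(z,z)$ term being handled by Propositions \ref{prop-Leray-fractional-alpha}--\ref{prop-Leray-fractional-alpha2}; concatenating $w$ and $\tilde u$ gives $\rU(x)<\infty$.

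For (2), the inequality $\inf_{\mathcal{X}_x}S_{-\infty}\le\rU(x)$ follows by extending any finite-horizon path from $0$ by the constant $0$ on $(-\infty,-T]$, which lies in $\mathcal{X}_x$ and, since $\cH(0)=0$, leaves the action unchanged. The reverse inequality is the substantive one: given $u\in\mathcal{X}_x$ with $S_{-\infty}(u)<\infty$, inequality \eqref{ineq-useful} gives $\cH(u)\in L^2(-\infty,0;D(\rA^{\frac\alpha2}))$, so the generalized Lemma \ref{lemma4.2} yields $u(t)\to0$ in $D(\rA^{\frac{\alpha+1}2})$ and $u(-T)\in D(\rA^{1+\frac\alpha2})$ for suitable large $T$; splicing a short linear connector from $0$ to the small, smooth point $u(-T)$ onto $u|_{[-T,0]}$ produces a finite-horizon path whose extra cost tends to $0$, whence $\rU(x)\le S_{-\infty}(u)$ after optimizing. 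This establishes \eqref{eqn-A1}.

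For (3), I would use \eqref{eqn-A1} to write $K_r=\pi\big(\{u\in\mathcal{X}:S_{-\infty}(u)\le r\}\big)$, where $\pi(u)=u(0)$. Indeed, if $x\in K_r$ then the infimum over the closed set $\mathcal{X}_x$ is attained, by the compact level sets of Proposition \ref{prop-action-lsc+compact} applied to a minimizing sequence, at some $u$ with $S_{-\infty}(u)=\rU(x)\le r$ and $\pi(u)=x$; conversely any such $u$ lies in $\mathcal{X}_x$, so $\rU(x)\le r$. Since $\pi:\mathcal{X}\to\rH$ is continuous for the metric $\rho$ (as $\rho$-convergence forces uniform convergence on $[-1,0]$) and $\{S_{-\infty}\le r\}$ is compact, $K_r$ is compact in $\rH$ as a continuous image of a compact set; closedness of all level sets then gives lower semicontinuity of $\rU$ on $\rH$.

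Finally, for (4), lower semicontinuity on $D(\rA^{\frac{\alpha+1}2})$ is inherited from (3), so the missing ingredient is the upper bound $\limsup_n\rU(x_n)\le\rU(x)$ for $x_n\to x$ in $D(\rA^{\frac{\alpha+1}2})$. I would obtain it by gluing a near-optimal path for $x$ to a short connector from $x$ to $x_n$ whose action is controlled by $|x_n-x|_{D(\rA^{\frac{\alpha+1}2})}$; this connector is again assembled from short uncontrolled-NSE arcs, which regularize the two endpoints up to $D(\rA^{1+\frac\alpha2})$, joined by a linear segment between the smoothed points, so that the $\rA u$ and $\rB(u,u)$ contributions stay in $L^2(D(\rA^{\frac\alpha2}))$ by virtue of Propositions \ref{prop-Leray-fractional-alpha}--\ref{prop-Leray-fractional-alpha2} and Proposition \ref{prop-NSEs-global}. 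I expect this last construction --- producing connecting paths between nearby $D(\rA^{\frac{\alpha+1}2})$-points with action vanishing as the points coalesce --- to be the main obstacle, exactly as in the delicate endpoint arguments of \cite{BCF_2013}, since a naive straight line between two points of this regularity fails to keep $\rA u$ in $L^2(D(\rA^{\frac\alpha2}))$, and it is precisely the NSE smoothing that resolves this.
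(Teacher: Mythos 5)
Your proposal is correct and follows essentially the same route as the paper, whose proof of Theorem \ref{teo4.3} consists precisely of transporting the arguments of \cite[Theorem 6.2 and Propositions 6.1, 6.5 and 6.6]{BCF_2013} to all $\alpha\geq 0$ by means of Propositions \ref{prop-Leray-fractional-alpha} and \ref{prop-Leray-fractional-alpha2} (together with Lemma \ref{lem-clar-2}, Proposition \ref{prop-NSEs-global}, the generalized Lemma \ref{lemma4.2} from Appendix \ref{section-negative}, and Proposition \ref{prop-action-lsc+compact}). Your reconstruction of the individual steps --- the regularize-then-reverse arc for the finiteness characterization, the zero-extension and small-smooth-connector arguments for \eqref{eqn-A1}, the identification $K_r=\pi(\{S_{-\infty}\leq r\})$ for compactness, and the glued connectors for continuity on $D(\rA^{\frac{\alpha+1}2})$ --- is consistent with the cited arguments and supplies more detail than the paper itself does.
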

\begin{proof}
The proofs of the above results follow the proofs   of  \cite[Theorem 6.2 and Propositions 6.1, 6.5 and  6.6]{BCF_2013}, while taking into account Propositions
\ref{prop-Leray-fractional-alpha}  and \ref{prop-Leray-fractional-alpha2}.
\end{proof}
\medskip

As mentioned in the introduction, in the present paper we want to prove the following theorem.

\begin{theorem}
\label{thm-main}
The family $\{\nu_\eps\}_{\eps>0}$ of the invariant measures for equation \eqref{eqn-SNSE-eps} satisfies a large deviation principle in $\rH$, with normalizing constants $\beta_\eps=\eps$ and  action functional $\rU$.
\end{theorem}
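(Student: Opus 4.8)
The plan is to verify directly the two probabilistic estimates \eqref{ineq-ld-lb} and \eqref{ineq-ld-ub} appearing in the definition of the LDP, with normalizing constant $\beta_\eps=\eps$ and action functional $I=\rU$; the lower semicontinuity and the compactness of the level sets $K_r=\{\rU\leq r\}$ are already supplied by Theorem \ref{teo4.3}(3), so no further work is needed on the functional-analytic side. The common engine is the defining invariance identity
\begin{equation*}
\nu_\eps(\Gamma)=\int_\rH \mathbb{P}\big(u^x_\eps(t)\in \Gamma\big)\,\nu_\eps(dx),\qquad t\geq 0,\ \Gamma\subset\rH\text{ Borel},
\end{equation*}
which converts every statement about $\nu_\eps$ into a statement about the solution paths $u^x_\eps$, to which the \emph{uniform} (over bounded sets of initial data) pathwise LDP of Theorem \ref{brc.teo.4.2} can be applied.

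For the lower bound I would fix $\bar{x}\in \rH$ with $\rU(\bar{x})<\infty$ (the case $\rU(\bar{x})=\infty$ being vacuous) together with $\delta,\gamma>0$. First, using the exponential (tightness) estimates of Theorem \ref{thm-exponential}, I would produce $a>0$ and $\eps_0>0$ with $\nu_\eps(B_\rH(0,a))\geq 1/2$ for all $\eps\leq \eps_0$, so that the $\nu_\eps$-mass near the origin does not decay. Restricting the $x$-integration in the invariance identity (evaluated at a suitable time $T$) to $B_\rH(0,a)$, it then suffices to bound $\mathbb{P}(u^x_\eps(T)\in B_\rH(\bar{x},\delta))$ from below uniformly for $x\in B_\rH(0,a)$. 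By the definition \eqref{eqn-B4-QP} of the quasi-potential one selects $T>0$ and a near-optimal control steering $x$ to within $\delta/2$ of $\bar{x}$ with cost at most $\rU(\bar{x})+\gamma/3$; the uniform pathwise lower bound of Theorem \ref{brc.teo.4.2} then gives $\mathbb{P}(u^x_\eps(T)\in B_\rH(\bar{x},\delta))\geq \exp(-(\rU(\bar{x})+2\gamma/3)/\eps)$ for $\eps$ small, which yields \eqref{ineq-ld-lb}.

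For the upper bound I would fix $s>0$, $\delta>0$ and $\gamma\in(0,s)$, and set $\Gamma_\delta=\{x\in\rH:\dist_\rH(x,K_s)>\delta\}$. Splitting the $x$-integration in the invariance identity over a large ball $B_\rH(0,R)$ and its complement, the tail $\nu_\eps(B_\rH(0,R)^c)$ is made smaller than $e^{-s/\eps}$ by Theorem \ref{thm-exponential} once $R$ is large, so the issue reduces to estimating $\mathbb{P}(u^x_\eps(t)\in\Gamma_\delta)$ for $x\in B_\rH(0,R)$. Here the standard Freidlin–Wentzell mechanism applies: by the dissipativity estimate \eqref{cb24} the unforced trajectory returns exponentially fast to a fixed ball $B_\rH(0,a)$, so any trajectory that lands in $\Gamma_\delta$ must, on some time window, track a control path of action at least $s$ — this is precisely the meaning of $K_s=\{\rU\leq s\}$ as the set reachable with cost $\leq s$. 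Gluing such excursions by the Markov property and invoking the uniform pathwise upper bound of Theorem \ref{brc.teo.4.2} over $B_\rH(0,R)$ gives $\mathbb{P}(u^x_\eps(t)\in\Gamma_\delta)\leq e^{-(s-\gamma)/\eps}$, and combining the two contributions yields \eqref{ineq-ld-ub}.

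The hard part will be the upper bound, and specifically the passage from the finite-horizon pathwise LDP to the stationary, effectively infinite-horizon statement for $\nu_\eps$: one must control trajectories uniformly over an unbounded range of times and over initial data in $B_\rH(0,R)$, and splice together excursions without degrading the sharp rate $s-\gamma$. It is exactly here that the uniform-in-time return estimate \eqref{cb24} and the exponential bounds of Theorem \ref{thm-exponential} are indispensable, since they guarantee both that excursions away from a fixed ball are exponentially costly and that the contribution of large initial data is negligible; and it is the need for these exponential estimates that forces the trace-class covariance and the hypothesis $\alpha>1$.
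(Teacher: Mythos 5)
Your overall architecture is the one the paper follows: invariance identity to transfer the estimate from $\nu_\eps$ to the paths $u^x_\eps$, exponential tightness (Theorem \ref{thm-exponential}) to discard large initial data, and the uniform pathwise LDP (Theorem \ref{brc.teo.4.2}) on a finite window. However, at the two points where the real work happens your plan skips steps that do not follow as stated. For the lower bound: the controls in the definition \eqref{eqn-B4-QP} of $\rU(\bar x)$ start from $0$, so for a generic $x\in B_\rH(0,a)$ there is no control of cost $\leq \rU(\bar x)+\gamma/3$ steering $x$ near $\bar x$; the cost of connecting $x$ to $\bar x$ is simply not bounded by $\rU(\bar x)$. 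The paper repairs this in Lemma \ref{lemma6.2}: one prepends a \emph{zero} control on $[0,T_0]$, uses the dissipativity estimate \eqref{cb24} to bring every $x\in B_\rH(0,R)$ into a small ball $B_\rH(0,\rho)$ at no cost, and then invokes continuous dependence of the controlled solution on the initial datum to conclude that the near-optimal control designed for initial datum $0$ still lands within $\delta/2$ of $\bar x$ when started from any $y_0$ with $|y_0|_\rH\leq\rho$. Without this relaxation-plus-continuity step the uniform lower bound over $x\in B_\rH(0,a)$ is unjustified.

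For the upper bound, the assertion that a trajectory ending in $\{\dist_\rH(\cdot,K_s)>\delta\}$ must ``track a control path of action at least $s$'' because ``this is precisely the meaning of $K_s$'' is not a proof: $K_s$ is the set reachable from $0$ (equivalently, via paths on $(-\infty,0]$ tending to $0$ at $-\infty$, \eqref{eqn-A1}), whereas the restarted trajectory begins at a point of small but nonzero norm and runs for a long but finite time. Closing this gap is the content of the paper's two compactness lemmas: Lemma \ref{Lem-CR-Lem7.1} (if $|z(-t)|_\rH<\lambda$, $S_{-t}(z)\leq s$ and $t\geq\bar T$, then $\dist_\rH(z(0),K_s)<\delta$), proved by a diagonal extraction producing a limit path on $(-\infty,0]$ with $S_{-\infty}\leq s$, together with Lemma \ref{lemma4.2} and Theorem \ref{teo4.3}(2); and Lemma \ref{lemma7.3} (any path that starts in $B_\rH(0,r)$ and avoids $B_\rH(0,\lambda)$ at all integer times $1,\dots,\bar n$ has action $>s$), which quantifies the cost of never returning to the small ball and is what makes the Markov-property splicing terminate with rate $s-\gamma$. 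Your sketch appeals instead to the return of the \emph{unforced} flow via \eqref{cb24}, which enters only inside the proofs of these lemmas and does not by itself control paths of bounded action. So the route is the paper's route, but both halves of the argument still need their key lemmas supplied.
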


In Theorem \ref{teo4.3}, we have seen that $\rU$ has compact level sets. Thus, in order to prove Theorem \eqref{thm-main}, in what follows we have only to prove the validity of the lower and upper bounds.

\bigskip

\section{Exponential estimates}\label{sec-exponentail}

In the proof of lower  bounds for the large deviation principle we need to prove that there exists some $\bar{R}>0$ such that
\begin{equation}
\label{cb7}
\lim_{\eps\to 0}\nu_\eps  \big( B_{\rH}^{\rm c}(0,\bar{R})\big)  \delr{|x|_{\rH}\geq \bar{R}}=0.
\end{equation}
On the other hand, in the proof of upper bounds we need something stronger. Actually we need that the convergence to zero in \eqref{cb7} is exponential.

\begin{theorem}
\label{thm-exponential}
For any $s>0$, there exist $\eps_s>0$ and $R_s>0$ such that
\begin{equation}
\label{cb8}
\delr{\nu_\eps\left(|x|_{\rH}\geq R_s\right)}
\nu_\eps \big( B_{\rH}^{\rm c}(0,R_s)\big)
\leq \exp \left(-\frac s\eps\right),\ \ \ \ \eps\leq \eps_s.
\end{equation}

\end{theorem}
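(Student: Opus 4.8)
The plan is to reduce the exponential tail bound \eqref{cb8} to a single uniform exponential moment estimate for the invariant measures, namely the claim that there exist $\gamma>0$ and $C>0$, independent of $\eps\in(0,1]$, such that
\[
\int_{\rH} \exp\!\left(\tfrac{\gamma}{\eps}\,|x|_{\rH}^2\right)\nu_\eps(dx)\leq C.
\]
Granting this, Chebyshev's inequality gives $\nu_\eps\big(B_{\rH}^{\rm c}(0,R)\big)\leq C\exp(-\gamma R^2/\eps)$ for every $R>0$. Then, given $s>0$, choosing $R_s$ so large that $\gamma R_s^2>s$ and $\eps_s$ so small that $\eps_s\log C\leq \gamma R_s^2-s$, one obtains $\nu_\eps\big(B_{\rH}^{\rm c}(0,R_s)\big)\leq \exp(\log C-\gamma R_s^2/\eps)\leq \exp(-s/\eps)$ for $\eps\leq\eps_s$, which is exactly \eqref{cb8}. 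So the whole statement rests on the displayed moment bound.

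To obtain that bound I would use ergodicity. Since $\exp(\frac{\gamma}{\eps}|x|_{\rH}^2)$ is unbounded, I first apply the ergodic identity \eqref{eqn-ergodicity} to the bounded truncations $\varphi_N(x):=\exp(\frac{\gamma}{\eps}|x|_{\rH}^2)\wedge N$, which yields
\[
\int_{\rH}\varphi_N\,d\nu_\eps=\lim_{T\to\infty}\frac1T\int_0^T\mathbb{E}\,\varphi_N\big(u^0_\eps(t)\big)\,dt\leq \sup_{t\geq0}\mathbb{E}\exp\!\left(\tfrac{\gamma}{\eps}\,|u^0_\eps(t)|_{\rH}^2\right),
\]
and then I let $N\to\infty$ by monotone convergence. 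This reduces everything to the uniform exponential estimate for the solutions themselves, i.e. to Lemma \ref{lem-exponential}: $\sup_{\eps\in(0,1]}\sup_{t\geq0}\mathbb{E}\exp(\tfrac{\gamma}{\eps}|u^0_\eps(t)|_{\rH}^2)<\infty$.

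The estimate for the solutions is where the real work lies. Writing $v=u^0_\eps$ and applying It\^o's formula to $|v|^2_{\rH}$, the nonlinear term drops out by \eqref{eqn:b02}, the Stokes term is controlled from below by $\langle \rA v,v\rangle_{\rH}\geq\lambda_1|v|^2_{\rH}$, and the It\^o correction contributes $\eps\,\mathrm{Tr}(QQ^\star)\,dt$, finite precisely because $\alpha>1$. Applying It\^o once more to $\exp(\frac{\gamma}{\eps}|v|^2_{\rH})=\exp(Y)$, the quadratic variation of the stochastic integral produces a correction of size $\frac{2\gamma^2}{\eps}|Qv|^2_{\rH}\leq \frac{2\gamma^2}{\eps}\Vert Q\Vert^2_{\mathcal{L}(\rH)}|v|^2_{\rH}$; choosing $\gamma$ small enough that $\gamma\Vert Q\Vert^2_{\mathcal{L}(\rH)}\leq\lambda_1/2$ makes the dissipation dominate this correction, leaving a drift bounded by $\exp(Y)\big(-\lambda_1 Y+\gamma\,\mathrm{Tr}(QQ^\star)\big)$. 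Absorbing the awkward factor $Y\exp(Y)$ via the elementary bound $-\lambda_1 Y\exp(Y)\leq -c\exp(Y)+c\exp(c/\lambda_1)$ for a suitable $c$, a Gronwall argument gives $\frac{d}{dt}\mathbb{E}\exp(\frac{\gamma}{\eps}|v(t)|^2_{\rH})\leq -\mathbb{E}\exp(\frac{\gamma}{\eps}|v(t)|^2_{\rH})+C'$ with $C'$ independent of $\eps$. Since $v(0)=0$, this yields the uniform bound $\max(1,C')$.

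The main obstacle is this last step: justifying that the stochastic integral has zero expectation, since $\exp(\frac{\gamma}{\eps}|v|^2_{\rH})$ is not a priori integrable. This requires localization by stopping times followed by Fatou's lemma, and—crucially—arranging the constants so that the spectral gap $\lambda_1$ and the trace-class bound combine into an estimate uniform in $\eps$. The trace-class property $\mathrm{Tr}(QQ^\star)<\infty$, guaranteed by $\alpha>1$, is exactly what keeps the $\eps\,\mathrm{Tr}(QQ^\star)$ correction harmless and makes the whole scheme work; this is precisely the point that fails for a merely bounded (non-trace-class) covariance.
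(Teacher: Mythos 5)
Your proposal is correct and follows essentially the same route as the paper: the key ingredient is the uniform exponential moment bound of Lemma \ref{lem-exponential}, obtained exactly as you describe via It\^o's formula applied to $\exp(\frac{\gamma}{\eps}|u|^2_{\rH})$, with $\gamma$ chosen small relative to $\lambda_1$, $\Vert Q\Vert^2$ and $\mathrm{Tr}[QQ^\star]$, localization by stopping times, and the elementary bound $e^r(1-r)\leq 1$; the conclusion then follows from ergodicity and Chebyshev. The only cosmetic difference is the order of operations — the paper applies Chebyshev to $\mathbb{P}(|u^0_\eps(t)|_{\rH}>R)$ first and then uses the ergodic identity on the bounded indicator of $B^{\rm c}_{\rH}(0,R)$, whereas you integrate the exponential moment against $\nu_\eps$ via truncation and monotone convergence before applying Chebyshev — but both are valid and rest on the same estimate.
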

This fundamental result will be used in the proof of Theorem \ref{thm-upper}. Let us note that the proof uses the  ergodicity of the invariant measure.

\begin{remark}\label{rem-thm-exponential}

The result from Theorem \ref{thm-exponential} is also true for the stochastic Navier-Stokes equations with multiplicative noise
 \begin{equation}
\label{eqn-SNSE-multiplicative}
du(t)+\rA u(t)+\rB(u(t),u(t))=\sqrt{\eps} g(u)\,dw(t),\ \ \ \ u(0)=u_0.
\end{equation}
where $w(t)$ is a cylindrical Wiener process on some separable Hilbert space $\rK$,  provided  the map
\[
g:\rH \to R(\rK,\rH)
\]
is a continuous and bounded and  there exists    unique ergodic   invariant measure $\nu_\eps$ of the corresponding Markov process.
\end{remark}
\vskip 1 truecm

An essential part of the proof of the above  result is given by the following exponential estimates. Their proof can in fact be traced to the paper \cite{Brz+Peszat_2000}, see also Appendix \ref{app-exponential estimates}, but we present here an independent one based on the use of a suitable Lyapunov function. This proof goes back to the paper \cite{Goldys+Maslowski_2005}, but that paper tried to treat so many cases simultaneously  that we decided to write down an independent  statement and proof.

\begin{lemma}
\label{lem-exponential}
In the framework of Theorem \ref{thm-exponential}, for any arbitrary $\eps>0$ there exists $\bar{\gamma}>0$ such that

\[\mathbb{E}\;\ex{\frac {\bar{\gamma}}\eps|u^x_\eps(t)|_{\rH}^2}\leq \ex{-\frac{\lambda_1}2 t}\ex{\frac{\bar{\gamma}}\eps|x|^2_{\rH}}+2,\;\;\; t>0.\]

\end{lemma}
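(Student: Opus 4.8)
The plan is to prove the exponential moment bound by applying the Itô formula to a suitably chosen exponential Lyapunov function of the form $V(u) = \exx{\frac{\bar\gamma}{\eps}|u|_{\rH}^2}$, exploiting the smoothing/dissipation coming from the Stokes operator together with the cancellation property of the nonlinearity, and then closing the estimate with Gronwall's lemma. The crucial algebraic input is the orthogonality relation \eqref{eqn:b02}, which gives $\lb \rB(u,u),u\rb = 0$, so that the nonlinear term drops out entirely when testing the equation against $u$; this is exactly what makes the energy estimate tractable and is the reason the same strategy succeeds here as in the deterministic estimate \eqref{cb23}.

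Concretely, first I would write down the Itô formula for $\phi(t):=|u^x_\eps(t)|_{\rH}^2$. Testing \eqref{eqn-SNSE-eps} against $u^x_\eps$ and using \eqref{eqn:b02} gives
\begin{equation}
\label{plan-ito1}
d|u^x_\eps(t)|_{\rH}^2 = -2|u^x_\eps(t)|_{\rV}^2\,dt + \eps\,\text{Tr}[QQ^\star]\,dt + 2\sqrt{\eps}\,\lb u^x_\eps(t),dw(t)\rb,
\end{equation}
where I have used $\lb \rA u,u\rb_\rH = |u|_{\rV}^2$ and the fact that the quadratic variation of the stochastic term contributes $\eps\sum_k |Qf_k|_{\rH}^2 = \eps\,\text{Tr}[QQ^\star]$, finite by the trace-class assumption. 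Next I would apply the Itô formula a second time to the exponential $\psi(x)=\exx{\frac{\bar\gamma}{\eps}x}$ composed with $\phi$. Writing $V_\eps(t):=\exx{\frac{\bar\gamma}{\eps}|u^x_\eps(t)|_{\rH}^2}$, this produces a drift term of the form
\begin{equation}
\label{plan-ito2}
\frac{\bar\gamma}{\eps}V_\eps(t)\Big(-2|u^x_\eps(t)|_{\rV}^2 + \eps\,\text{Tr}[QQ^\star] + \frac{2\bar\gamma}{\eps}|Q^\star u^x_\eps(t)|_{\rH}^2\Big)\,dt
\end{equation}
plus a martingale term. Using the Poincaré-type strict positivity $|u|_{\rV}^2 \geq \lambda_1|u|_{\rH}^2$ and the bound $|Q^\star u|_{\rH}^2 \leq \Vert Q\Vert^2 |u|_{\rH}^2$, the bracket in \eqref{plan-ito2} is dominated by $\frac{\bar\gamma}{\eps}V_\eps(t)\big(-2\lambda_1|u|_{\rH}^2 + \eps\,\text{Tr}[QQ^\star] + 2\bar\gamma\Vert Q\Vert^2|u|_{\rH}^2\big)$, and I would choose $\bar\gamma$ small enough (depending only on $\lambda_1$ and $\Vert Q\Vert$, say $\bar\gamma \leq \lambda_1/(2\Vert Q\Vert^2)$) that the coefficient of $|u|_{\rH}^2$ becomes $\leq -\lambda_1$, giving a clean dissipative inequality.

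The decisive step is then to observe that $\frac{\bar\gamma}{\eps}V_\eps(t)\big(-\lambda_1|u|_{\rH}^2 + \eps\,\text{Tr}[QQ^\star]\big)$ can be split so as to absorb the constant term: since $t\mapsto te^{-ct}$ is bounded, there is a constant so that $\frac{\bar\gamma}{\eps}V_\eps(-\lambda_1|u|_{\rH}^2) \leq -\frac{\lambda_1}{2}V_\eps + C$ where the large-$|u|$ regime forces super-exponential decay of the drift and the small-$|u|$ regime is uniformly bounded; combining with $\frac{\bar\gamma}{\eps}V_\eps\cdot\eps\,\text{Tr}[QQ^\star]$, which is $V_\eps$ times a bounded factor, one obtains $dV_\eps \leq \big(-\tfrac{\lambda_1}{2}V_\eps + \tilde C\big)\,dt + d(\text{mart})$. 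Taking expectations kills the martingale and leaves the scalar ODE inequality $\frac{d}{dt}\E V_\eps(t) \leq -\tfrac{\lambda_1}{2}\E V_\eps(t) + \tilde C$, which by Gronwall gives $\E V_\eps(t) \leq \ex{-\frac{\lambda_1}{2}t}\E V_\eps(0) + \frac{2\tilde C}{\lambda_1}$; after adjusting $\bar\gamma$ and the constants one arrives at the stated bound with the additive constant $2$.

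The main obstacle I anticipate is twofold. First, justifying the application of Itô's formula to the unbounded, rapidly growing function $V_\eps$ requires a localization argument: one works with stopping times $\tau_N = \inf\{t: |u^x_\eps(t)|_{\rH} \geq N\}$, derives the inequality up to $t\wedge\tau_N$ where all terms are bounded and the stopped martingale has zero expectation, and then passes to the limit $N\to\infty$ using Fatou's lemma together with the a priori finiteness of the lower-order moments (which follows from $u^x_\eps \in L^p(\Omega;C([0,T];\rH))$). Second, one must verify that the constant $\bar\gamma$ can indeed be chosen uniformly in the way claimed; this is where the trace-class hypothesis $\text{Tr}[QQ^\star]<\infty$ is indispensable, since without it the stochastic correction term $\eps\,\text{Tr}[QQ^\star]$ would be infinite and no finite exponential moment could survive — precisely the point flagged in the introduction about needing $\alpha>1$.
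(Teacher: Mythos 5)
Your argument is correct and follows essentially the same route as the paper: It\^o's formula applied to the exponential of $|u|_{\rH}^2$, the cancellation $\lb \rB(u,u),u\rb=0$, Poincar\'e's inequality, the choice of $\bar\gamma$ small relative to $\Vert Q\Vert^2$ and $\mathrm{Tr}[QQ^\star]$, and the elementary bound $\ex{r}(1-r)\leq 1$ (your ``$r\ex{-cr}$ is bounded'' step), the only cosmetic difference being that the paper folds the factor $\ex{\lambda_1 t/2}$ into the It\^o function and integrates directly rather than invoking Gronwall at the end. Note only that the quadratic-variation term in your display \eqref{plan-ito2} carries a spurious extra factor $1/\eps$ (it should read $\frac{2\bar\gamma^2}{\eps}V_\eps|Q^\star u|_{\rH}^2$), but the dominating expression you use immediately afterwards is the correct one, so the estimate goes through unchanged.
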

\begin{remark}\label{rem-lem-exponential}

The result from Lemma \ref{lem-exponential} is also true for the stochastic Navier-Stokes equations with
multiplicative noise \eqref{eqn-SNSE-multiplicative} provided
$w$ is a cylindrical Wiener process on some separable Hilbert space $\rK$ and
\[
g:\rH \to R(\rK,\rH)
\]
is a continuous and bounded map. See also Remark \ref{rem-thm-exponential}.
\end{remark}

\begin{proof}[Proof of Lemma \ref{lem-exponential}]
Take arbitrary $\eps,\lambda, \gamma>0$, that will be specified later on. As a consequence of It\^{o}'s formula applied to the function
\[
\displaystyle{\vp: \mathbb{R}_+\times H \ni (t,u) \mapsto \ex{\lambda t} \,\ex{\frac \gamma\eps|u|_{\rH}^2} \in \mathbb{R},
}
\]
we have
\[\begin{array}{l}
\displaystyle{d\left[\ex{\frac \gamma\eps|u^x_\eps(t)|^2}\ex{\lambda t}\right]=\lambda \ex{\lambda t}\ex{\frac \gamma\eps|u^x_\eps(t)|^2}\,dt+\ex{\lambda t} \ex{\frac \gamma\eps|u^x_\eps(t)|^2}\frac{2\gamma}\eps \langle u^x_\eps(t),du^x_\eps(t)\rangle}\\
{\;}\\
\displaystyle{+ \ex{\lambda t}\ex{\frac \gamma\eps|u^x_\eps(t)|^2}\,\gamma\,\mbox{Tr}[Q Q^\star]\,dt
+\ex{\lambda t}\frac{2\gamma^2}{\eps}\ex{\frac\gamma\eps \,|u^x_\eps(t)|^2} |Q u^x_\eps(t)|_{\rH}^2\,dt}\\
{\;}\\
\displaystyle{=\ex{\lambda t}\ex{\frac \gamma\eps|u^x_\eps(t)|^2}\!\left[\left(-\frac {2\gamma}\eps\langle \rA u^x_\eps(t),u^x_\eps(t)\rangle+\gamma \,\mbox{Tr}[QQ^\star]-\frac{2\gamma}\eps \langle \rB( u^x_\eps(t),u^x_\eps(t)),u^x_\eps(t)\rangle\right.\right.}\\
{\;}\\
\displaystyle{\left.\left.+\lambda+\frac{2\gamma^2}{\eps}|Q u^x_\eps(t)|^2 \right)\,dt
+\frac{2\gamma}{\eps} \langle u^x_\eps(t),QdW(t)\rangle\right]}\\
{\;}\\
\displaystyle{\leq \ex{\lambda t}\ex{\frac \gamma\eps|u^x_\eps(t)|^2}\left[-\frac{2\gamma}\eps \left(\lambda_1-\frac{\gamma}2 \|Q\|^2\right) | u^x_\eps(t)|_{\rH}^2+\gamma \,\mbox{Tr}[QQ^\star]+\lambda\right]\,dt}\\
{\;}\\
\displaystyle{+\ex{\lambda t}\ex{\frac \gamma\eps|u^x_\eps(t)|^2}\frac{2\gamma}\eps\langle u^x_\eps(t),Qdw(t)\rangle.}
\end{array}\]
Now, if we put $\lambda=\lambda_1/2$ and choose (small enough) $\bar{\gamma}>0$  such that
\[\lambda_1-{\bar{\gamma}} \|Q\|^2\geq \frac {\lambda_1}2,\ \ \ \ \ \bar{\gamma} \,\mbox{Tr}[QQ^\star]\leq \frac {\lambda_1}2,\]
we get
\begin{eqnarray*}
\displaystyle{ d \left[ \ex{\frac{\lambda_1}2 t}\,\ex{\frac {\bar{\gamma}}\eps|u^x_\eps(t)|^2} \right]}
&\leq& \ds{
\frac{2\gamma}\eps \ex{\frac{\lambda}{2} t}\ex{\frac \gamma\eps|u^x_\eps(t)|^2}\langle u^x_\eps(t),Qdw(t)\rangle
}\\
{\; }&&\\
\displaystyle{}&+&\ds{\lambda_1\; \ex{\frac{\lambda_1}2 t}\,
\ex{\frac {\bar{\gamma}}\eps|u^x_\eps(t)|^2}\left[-\frac{\bar{\gamma}}\eps  | u^x_\eps(t)|_{\rH}^2+1\right]\,dt.}
\end{eqnarray*}

By taking expectation (and considering stopping times) this implies that

\[\begin{array}{l}
\displaystyle{ \mathbb{E} \left[ \ex{\frac{\lambda_1}2 t}\,  \,\ex{\frac {\bar{\gamma}}\eps|u^x_\eps(t)|^2} \right]}\\
\vs\\
\displaystyle{\leq  \ex{ \frac{\bar\gamma}{\eps} |x|^2 } + \lambda_1\, \mathbb{E} \left[\int_0^t\ex{\frac{\lambda_1}2 s}\,
\ex{\frac {\bar{\gamma}}\eps|u^x_\eps(s)|^2}\left[-\frac{\bar{\gamma}}\eps  | u^x_\eps(s)|_{\rH}^2+1\right]\,ds\right].}
\end{array}\]

Since $e^r(-r+1)\leq 1$, for any $r\geq0$, this yields
\[\ex{\frac{\lambda_1}2 t}\,\mathbb{E}\ex{\frac {\bar{\gamma}}\eps|u^x_\eps(t)|_{\rH}^2}\leq \ex{\frac{\bar{\gamma}}\eps|x|^2_{\rH}}+\lambda_1\int_0^t\ex{\frac{\lambda_1}2 s}\,ds=\ex{\frac{\bar{\gamma}}\eps|x|^2_{\rH}}+2\left(\ex{\frac{\lambda_1}2 t}-1\right),\]
so that
\[\mathbb{E}\ex{\frac {\bar{\gamma}}\eps|u^x_\eps(t)|_{\rH}^2}\leq \ex{-\frac{\lambda_1}2 t}\ex{\frac{\bar{\gamma}}\eps|x|^2_{\rH}}+2.\]

\end{proof}

Now, we continue with the proof of the main result in this section.

\begin{proof}[Proof of Theorem \ref{thm-exponential}] We use the notation introduced in the proof of Lemma \ref{lem-exponential}.
 Let us fix $R>0$ and $t> 0$.
By the previous  lemma and  Chebyshev's inequality,
 we have
\[\begin{array}{l}
\displaystyle{\mathbb{P}\big( u^x_\eps(t) \in B_{\rH}^{\rm c}(0,R)\big)= \mathbb{P}\left(|u^x_\eps(t)|_{\rH}>R\right)=\mathbb{P}\left(\ex{\frac {\bar{\gamma}}\eps|u^x_\eps(t)|_{\rH}^2}>\ex{\frac{R^2\bar{\gamma}}{\eps}}\right)}\\
\vs\\
\ds{\leq
\ex{-\frac{R^2\bar{\gamma}}{\eps}}\mathbb{E}\left(\ex{\frac {\bar{\gamma}}\eps|u^x_\eps(t)|_{\rH}^2}\right)\leq  \ex{-\frac{R^2\bar{\gamma}}{\eps}}\left[\ex{-\frac{\lambda_1}2 t}\ex{\frac{\bar{\gamma}}\eps|x|^2_{\rH}}+2\right].}
\end{array}\]
Now, due to the  ergocity of the invariant measure $\nu_\eps$, for any function $\varphi:\rH\to\mathbb{R}$, Borel and bounded,
\[\int_{\rH}\varphi(x)\,\nu_\eps(dx)=\lim_{T\to \infty}\frac 1T\int_0^T\mathbb{E}\,\varphi(u^0_\eps(s))\,ds.\]
This implies that for any $R>0$
\[\begin{array}{l}
\ds{ \nu_\eps \big( B_{\rH}^{\rm c}(0,R)\big) = \lim_{T\to \infty}\frac 1T\int_0^T\mathbb{P}\left(u^0_\eps(s) \in B_{\rH}^{\rm c}(0,R)\big) \right)\,ds}\delr{\nu_\eps\left(|x|_{\rH}>R\right)=\lim_{T\to \infty}\frac 1T\int_0^T\mathbb{P}\left(|u^0_\eps(s)|_{\rH}>R\right)\,ds}\\
{\;}\\
\ds{\leq \ex{-\frac{R^2\bar{\gamma}}{\eps}}\limsup_{T\to \infty}\frac 1T\int_0^T\left(\ex{-\frac{\lambda_1}2 s}+2\right)\,ds=2\,\ex{-\frac{R^2\bar{\gamma}}{\eps}}.}
\end{array}\]
Hence, if we fix $s>0$ and put
\[R_s:=\sqrt{\frac{2s}{\bar{\gamma}}},\ \ \ \ \eps_s:=\frac{R^2_s\bar{\gamma}}{2\log 2},\]
 we have that
\[\nu_\eps \big( B_{\rH}^{\rm c}(0,R_s)\big)\leq \ex{-\frac{R_s^2\bar{\gamma}}{2\eps}}=\ex{-\frac s\eps},\ \ \ \ \eps\leq \eps_s,\]
and this concludes proof of Theorem \ref{thm-exponential}.

\end{proof}

\begin{remark}
In the proof of \eqref{cb8} we have used the It\^o formula and for this reason we had to assume that the noise covariance $QQ^\star$ is of trace class. In fact, in order to prove \eqref{cb7}, we do not need to assume the covariance $QQ^\star$ to be of trace class, as shown below.

Suppose, as in \cite{Brz+Li_2006}, that $w$ is a cylindrical Wiener process on a Hilbert space $\rK$ such that the following  assumption,  already formulated  in \cite{Brz+Li_2006} (with some small difference) as Assumption 6.1,   is satisfied.

\begin{assumption}\label{ass:A-01}
$\mathrm{K} \subset \mathrm{H} $ is a Hilbert space
such that
 for some $\delta \in (0,1/2)$, the image of  $\rK$ by  $\mathrm{A}^{-\delta}$ is contained in $\mathrm{H} \cap \mathbb{L}^4$ and the map
\begin{equation}
\label{A-gamma}
\begin{aligned} \mathrm{A}^{-\delta}:K \to \mathrm{H} \cap \mathbb{L}^4
\hbox{ is } \gamma\hbox{-radonifying}.
\end{aligned}
\end{equation}

\end{assumption}

It is proven in    \cite[Remark 6.1]{Brz+Li_2006} that if $\rA$ is the Stokes operator on a $2$-dimensional torus,
then $\mathrm{A}^{-s}: \mathrm{H}\to \mathrm{H} \cap \mathbb{L}^4 $ is
$\gamma$-radonifying iff $s> \frac12$. In other words, if
$K=D(A^\alpha)$ for some $\alpha \in (0,\frac12]$, then   the map $\mathrm{A}^{-\delta}:K \to \mathrm{H} \cap
\mathbb{L}^4$ is $\gamma$-radonifying iff $\alpha+\delta>1/2$. Hence,  if
$K=D(A^\alpha)$ for some $\alpha \in (0,\frac12]$,
Assumption \ref{ass:A-01} is satisfied, with  $\delta \in (\frac12-\alpha,\frac12)$. Moreover,
if $K=D(A^\alpha)$ for some $\alpha > \frac12$, then
Assumption \ref{ass:A-01} is satisfied, for any  $\delta \geq 0$.

If we denote
\[z_\eps(t)=\sqrt{\eps}\int_0^t\ex{(t-s)\rA}d w^Q(s)=\sqrt{\eps}\,z_1(t),\ \ \ \ t\geq 0,\]
we have $u^x_\eps(t)=z_\eps(t)+v_\eps(t)$, where $v_\eps(t)$ is the solution of the problem
\[\frac{dv_\eps(t)}{dt}+\rA v_\eps(t)+\rB(v_\eps(t)+z_\eps(t),v_\eps(t)+z_\eps(t))=0,\ \ \ v_\eps(0)=x.\]
Then, we have
\[\frac{d}{dt}|v_\eps(t)|^2_{\rH}\leq \left(-\lambda_1+c\,\eps^2\,|z_1(t)|_{L^4(D)}^4\right)|v_\eps(t)|^2_{\rH}+c\,\eps^2\,|z_1(t)|_{L^4(D)}^4.\]
Now, we fix $M>1$ and define
\[g(r):=\log(r\vee M),\ \ \ \ \ r \in\,\mathbb{R}.\]
Then, since $g$ belongs to $H^1_{\rm loc}(\mathbb{R})$ and the weak derivative $g^\prime(r)$ equals $r^{-1}1_{\{r>M\}}$, we get
\[\begin{array}{l}
\ds{\frac d{dt}g\left(|v_\eps(t)|^2_{\rH}\right)=g^\prime\left(|v_\eps(t)|^2_{\rH}\right)\frac{d}{dt}|v_\eps(t)|^2_{\rH}}\\
\vs\\
\ds{\leq \frac 1{|v_\eps(t)|^2_{\rH}}c\,\eps^2\,|z_1(t)|_{L^4(D)}^4 1_{(M,\infty)} (|v_\eps(t)|^2_{\rH})}\\
{\;}\\
\ds{+ \frac 1{|v_\eps(t)|^2_{\rH}}\left( -\lambda_1+c\,\eps^2\,|z_1(t)|_{L^4(D)}^4\right)|v_\eps(t)|^2_{\rH}1_{(M,\infty)}(|v_\eps(t)|^2_{\rH})}\\
{\;}\\
\ds{\leq \frac {c\,\eps^2}M |z_1(t)|_{L^4(D)}^4+\left( -\lambda_1+c\,\eps^2\,|z_1(t)|_{L^4(D)}^4\right)1_{(M,\infty)}(|v_\eps(t)|^2_{\rH}).}
\end{array}\]
This implies
\[\begin{array}{l}
\ds{\mathbb{E}\,g\left(|v_\eps(t)|^2_{\rH}\right)-g(|x|^2_{\rH})}\\
\vs\\
\ds{\leq -\lambda_1\int_0^t\mathbb{P}\left(|v_\eps(s)|^2_{\rH}>M\right)\,ds+c_M\,\eps^2\int_0^t\mathbb{E}|z_1(s)|_{L^4(D)}^4\,ds.}
\end{array}\]
On the other hand,  if we choose $M>|x|_{\rH}^2$, then $g(|x|^2_{\rH})=g(0)$ and since   $g$ is an increasing function, we infer that
\[g\left(|v_\eps(t)|^2_{\rH}\right)-g(|x|^2_{\rH})\geq 0,\ \ \ \ \mathbb{P}-a.s.\]
Hence
\[\int_0^t\mathbb{P}\left(|v_\eps(s)|^2_{\rH}>M\right)\,ds\leq \frac{c_M}{\lambda_1}
\,\eps^2\int_0^t\mathbb{E}|z_1(s)|_{L^4(D)}^4\,ds.\]
Therefore, for any $R>\sqrt{2}\,\big( |x|_{\rH} \vee 1)$ we have, by the Chebyshev inequality, that
\[\begin{array}{l}
\ds{\int_0^t\mathbb{P}\left(|u^x_\eps(s)|^2_{\rH}>R^2\right)\,ds}\\
\vs\\
\ds{\leq \int_0^t\mathbb{P}\left(|v_\eps(s)|^2_{\rH}>\frac {R^2} 2\right)\,ds+\int_0^t\mathbb{P}\left(\eps\,|z_1(s)|^2_{\rH}>\frac {R^2}2\right)\,ds}\\
{\;}\\
\ds{\leq \big( \frac{c_M}{\lambda_1}+ \frac{2}{R^4}\big) \,\eps^2\int_0^t\mathbb{E}|z_1(s)|_{L^4(D)}^4\,ds,}
\end{array}\]
and this allows us to conclude that
\begin{eqnarray}
\nonumber\ds{\nu_\eps \big( B_{\rH}^{\rm c}(0,R)\big) \delr{\left(|x|_{\rH}>R\right)}}
&=&\ds{\lim_{T\to\infty}\frac 1T\int_0^T\mathbb{P}\left(|u^x_\eps(s)|^2_{\rH}>R^2\right)\,ds}
\\
&\leq& \ds{c_{R}\frac{\eps^2}T\int_0^T\mathbb{E}|z_1(s)|_{L^4(D)}^4\,ds.}
\label{cb48}
\end{eqnarray}

On the other hand, as in \cite{Brz+Li_2006}, by the Burkholder inequality, see \cite{Brz_1997-sc} and \cite{Ondr2004},  for any $s>0$ we have
\begin{equation}
 \label{equ:Eineq2}
\begin{array}{l}
\ds{ \mathbb{E}|z_1(s)|^4_{L^4} =
  \E \left| \int_{0}^s \ex{-(s-r)\rA} dw(r)\right|^4_{L^4} \leq c \big( \int_{0}^s
\| \ex{-r\rA}\|^2_{R(K, L^4)} dr \big)^2}\\
\vs\\
\ds{\leq c
\big(\int_0^\infty  \|\ex{-r\rA}\|^2_{R(K,L^4)} dr\big)^2= c
\big(\int_0^\infty  \|\rA^{\delta} \ex{-r\rA}\rA^{-\delta}\|^2_{R(K,L^4)} dr\big)^2}\\
\vs\\
\ds{\leq  c
\big(\int_0^\infty  \|\rA^{\delta} \ex{-r\rA}\|^2_{\mathcal{L}(L^4,L^4)} \| \rA^{-\delta}\|^2_{R(K,L^4)} dr\big)^2}\\
\vs\\
\ds{\leq  c \| \rA^{-\delta}\|^4_{R(K,L^4)}
\big(\int_0^\infty\!\!  \|\rA^{\delta} \ex{-r\rA}\|^2_{\mathcal{L}(L^4,L^4)}  dr\big)^2\leq c \| \rA^{-\delta}\|^4_{R(K,L^4)}
\big(\int_0^\infty  \frac{\ex{-2\mu r}}{r^{2\delta}}  dr\big)^2 .}
\end{array}
      \end{equation}
Thus, because by the ideal property of the $\gamma$-radonifying norm,  see e.g. Baxendale  \cite{Bax_1976} and \cite{Brz_1997-sc},
\[\|  T_1T_2 \|_{R(K,L^4)} \leq \| T_1 \|_{\mathcal{L}(L^4,L^4)} \|  T_2 \|_{R(K,L^4)},\]
we can conclude that
\[\sup_{s>0} \mathbb{E}\,|z_1(s)|^4_{L^4}<\infty.\]

\deln{
Since  we assume $\beta>1$, for any $s\geq 0$ we have
\[\mathbb{E}|z_1(s)|_{L^4(D)}^4\leq c\,\mathbb{E}|z_1(s)|_{D(\rA^{\frac 12})}^4\leq c\left(\sum_{k=1}^\infty \int_0^s \ex{-2\lambda_k(s-r)}\lambda_k^{1-\beta}\,dr\right)^2\leq c\,\sum_{k=1}^\infty\lambda_k^\beta<\infty.\]
}
Thanks to inequality \eqref{cb48}, this allows to conclude that   equality \eqref{cb7} holds.
Let us note that the idea of using the function $g$ goes back to  the proof of Theorem 5.4 in \cite{DaP+G_1995}.
\end{remark}

\bigskip

\section{Lower bounds}\label{sec-lower}

Our purpose here is proving the following upper bound.

\begin{theorem}
\label{thm-lower}
For any $\delta, \gamma >0$ and $\bar{x} \in\,\rH$, there exists $\eps_0>0$ such that
\begin{equation}
\label{cb60}
\nu_\eps \big( B_{\rH}^{\rm c}(\bar{x},\delta)\big)
\delr{\nu_\eps\left(  |x-\bar{x}|_\rH<\delta\right)}\geq \ex{-\frac{\rU(\bar{x})+\gamma}{\eps}},\ \ \ \ \eps\leq \eps_0.\end{equation}

\end{theorem}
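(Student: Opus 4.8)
The plan is to prove the lower bound \eqref{cb60} by establishing a \emph{link} between the invariant measure $\nu_\eps$ and the finite-time large deviation principle of Theorem \ref{brc.teo.4.2}, using the invariance property \eqref{eqn-invariant measure} together with the Markov property. The key idea is that the quasi-potential $\rU(\bar x)$ is, by definition \eqref{eqn-B4-QP}, approximable: given $\gamma>0$ we may choose $T>0$ and a control path $\psi\in C_{0,\bar x}([0,T];\rH)$ (starting at the origin and ending at $\bar x$) such that $S_T(\psi)\leq \rU(\bar x)+\gamma/2$. Since such a path realizes the infimum up to $\gamma/2$, it should serve as the ``preferred trajectory'' along which the process must travel in order for $u_\eps$ to be near $\bar x$ at time $T$.

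First I would use the stationarity of $\nu_\eps$: writing $P_T$ for the Markov transition semigroup, invariance \eqref{eqn-invariant measure} gives, for the indicator-type estimate,
\[
\nu_\eps\big(B_{\rH}(\bar x,\delta)\big)=\int_{\rH} P_T\mathbf 1_{B_{\rH}(\bar x,\delta)}(x)\,\nu_\eps(dx)
=\int_{\rH}\mathbb{P}\big(u^x_\eps(T)\in B_{\rH}(\bar x,\delta)\big)\,\nu_\eps(dx).
\]
Rather than trying to control this from an arbitrary starting point $x$, I would restrict the integration to a fixed bounded ball $B_{\rH}(0,R)$, on which the uniform-in-$x$ large deviation lower bound of Theorem \ref{brc.teo.4.2} applies. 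The plan is then to replace the two-time event $\{u^x_\eps(T)\in B_{\rH}(\bar x,\delta)\}$ by a single-path event: since $\psi(0)=0$, for $x$ near $0$ one wants the solution $u^x_\eps$ to stay uniformly close to $\psi$ on $[0,T]$, which forces $u^x_\eps(T)$ into $B_{\rH}(\bar x,\delta)$. Here I would want a short continuity/stability estimate for the skeleton equation so that starting $u^x_\eps$ at $x$ with $|x|_{\rH}$ small (rather than exactly at $0$) still keeps the path in a tube around $\psi$; this uses the local well-posedness and continuity results of Section \ref{sec-skeleton} (in particular the energy estimate \eqref{cb4}).

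Concretely, the lower bound from Theorem \ref{brc.teo.4.2}, applied uniformly over $x$ in a small ball around the origin, yields $\eps_0>0$ with
\[
\mathbb{P}\big(\|u^x_\eps-\psi\|_{C([0,T];\rH)}<\delta'\big)\geq \exp\!\Big(-\tfrac{S_T(\psi)+\gamma/2}{\eps}\Big),\qquad \eps\leq\eps_0,
\]
for a suitably small $\delta'<\delta$ and all $x$ in that ball. Combining this with the integral representation above, and bounding $\int_{B_{\rH}(0,r)}\nu_\eps(dx)$ from below by a constant bounded away from zero (this is where Theorem \ref{thm-exponential}, or at least \eqref{cb7}, guarantees that $\nu_\eps$ does not escape to infinity, so that $\nu_\eps(B_{\rH}(0,r))\geq c>0$ uniformly for small $\eps$), I obtain
\[
\nu_\eps\big(B_{\rH}(\bar x,\delta)\big)\geq c\,\exp\!\Big(-\tfrac{S_T(\psi)+\gamma/2}{\eps}\Big)\geq c\,\exp\!\Big(-\tfrac{\rU(\bar x)+\gamma}{\eps}\Big),
\]
and the constant $c$ is absorbed by taking $\eps$ small enough (since $c\geq \exp(-\gamma/(2\eps))$ for $\eps$ small), which gives \eqref{cb60}.

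The main obstacle I anticipate is the \emph{interface between the two time scales}: the finite-$T$ LDP of Theorem \ref{brc.teo.4.2} is uniform over bounded initial data, but one must show that the stationary mass $\nu_\eps$ places enough weight near the origin — uniformly in $\eps$ — so that the initial point $x$ can be taken in the ball where the uniform lower bound is available. This is exactly what the tightness/non-escape estimate \eqref{cb7} supplies, and it is why the exponential estimates of Section \ref{sec-exponentail} are invoked. A secondary technical point is choosing the tube radius $\delta'$ and the starting-ball radius small enough, via the deterministic stability of the skeleton flow, so that closeness of $u^x_\eps$ to $\psi$ on all of $[0,T]$ genuinely implies $u^x_\eps(T)\in B_{\rH}(\bar x,\delta)$ even though $x\neq 0=\psi(0)$; this is routine given the continuous dependence results but must be done carefully to keep the control energy increase below $\gamma/2$.
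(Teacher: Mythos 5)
Your proposal is correct in outline and shares the paper's skeleton: pick a near\--optimal control path ending at $\bar{x}$, use the invariance identity \eqref{eqn-invariant measure} to reduce to a probability estimate for $u^x_\eps$ started from $\nu_\eps$\--typical $x$, and invoke the uniform LDP of Theorem \ref{brc.teo.4.2} on a tube around the reference path. It diverges from the paper at the one genuinely delicate point, and there your stated justification does not suffice. You restrict the integration to a \emph{small} ball $B_{\rH}(0,r)$ (with $r$ dictated by the continuity argument that forces the tube around $\psi$ to end in $B_{\rH}(\bar{x},\delta)$ even though $x\neq 0$) and claim $\nu_\eps(B_{\rH}(0,r))\geq c>0$ because ``$\nu_\eps$ does not escape to infinity.'' That inference is a non sequitur: \eqref{cb7} is a tightness statement for \emph{some large} radius $\bar{R}$ and gives no mass on a prescribed small neighbourhood of the origin; a priori $\nu_\eps$ could concentrate on an annulus inside $B_{\rH}(0,\bar{R})$, and proving it charges a small ball at $0$ is essentially the lower bound you are trying to establish (at the point $\bar x =0$). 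Your step can be rescued, but only by the full quantitative exponential estimate: the proof of Theorem \ref{thm-exponential} gives $\nu_\eps\big(B_{\rH}^{\rm c}(0,R)\big)\leq 2\,\ex{-R^2\bar{\gamma}/\eps}$ for \emph{every} $R>0$, hence $\nu_\eps(B_{\rH}(0,r))\to 1$ for each fixed $r$. You should cite that, not \eqref{cb7}.

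The paper's proof avoids needing this stronger concentration fact altogether, via Lemma \ref{lemma6.2}: it prepends to $\bar{\varphi}$ a control\--free segment of length $T_0$ on which the unforced Navier\--Stokes flow contracts exponentially (estimate \eqref{cb24}), so that \emph{every} $x$ in the \emph{large} ball $B_{\rH}(0,\bar{R})$ is funnelled into the small $\rho$\--ball where continuous dependence applies, at zero additional cost in the action. The uniform LDP is then applied over the large ball, for which the plain tightness statement \eqref{cb7} genuinely suffices. This concatenation device is what your argument is missing if one does not want to lean on the trace\--class exponential moments; it is also why the paper's route would survive in settings where only tightness of $\{\nu_\eps\}$ is available. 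A minor additional point: your budgeting of $\gamma$ overshoots (after absorbing the constant $c\geq \ex{-\gamma/(2\eps)}$ you land at $\rU(\bar{x})+3\gamma/2$, not $\rU(\bar{x})+\gamma$); split $\gamma$ as $\gamma/4+\gamma/4+\gamma/2$ as in the paper.
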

Let us point out that in the proof of the result we will use that fact that $\nu_\eps$ is an
invariant measure of the Markov process corresponding to the stochastic Navier-Stokes

Before proceeding with the proof of Theorem \ref{thm-lower}, we need to prove a preliminary result.

\medskip

\begin{lemma}
\label{lemma6.2}
Suppose that  $\gamma, \bar{T}>0$, $\bar{x} \in\,\rH$ and $\bar{\varphi} \in\,L^2(0,\bar{T};\rH)$ satisfy
\begin{equation}
\label{cb50-1}\frac{1}2 |\bar{\varphi}|^2_{L^2(0,\bar{T};\rH)}\leq \rU(\bar{x})+\frac \gamma 4.
\end{equation}
Moreover, assume there exists  a solution $\bar{z} \in\,C([0,\bar{T}];\rH)$  to the problem
\begin{equation}
\label{cb50}
\bar{z}^{\,\prime}(t)+\rA \bar{z}(t)+B(\bar{z}(t),\bar{z}(t))=Q\bar{\varphi}(t),\ \ \ \bar{z}(0)=0,\ \ \bar{z}(\bar{T})=\bar{x}.
\end{equation}
Then, for all $\delta$ and $R>0$ there exists $T_0>0$  and $\varphi_0 \in L^2(0,T_0+\bar{T};\rH)$ such that
\begin{equation}
\label{cb100}
\frac 12\,|\varphi_0|^2_{L^2(0,T_0+\bar{T};\rH)}\leq \rU(\bar{x})+\frac\gamma 4
\end{equation}
and
\begin{equation}
\label{cb53}
\sup_{|x|_{\rH}\leq R}\left|z_x(\varphi_0)(T_0+\bar{T})-\bar{x}\right|_{\rH}\leq \frac \delta 2,
\end{equation}
where  $z_x(\varphi_0) \in C([0,T_0+\bar{T}];\rH)$ is the (unique) solution of the control problem
\begin{equation}
\label{cb52}
z^\prime(t)+\rA z(t)+B(z(t),z(t))=Q\varphi_0(t),\ \ t \in\,[0,T_0+\bar{T}],
\ \ \ \ z(0)=x.
\end{equation}
\end{lemma}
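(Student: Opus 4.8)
The plan is to prove Lemma \ref{lemma6.2} by first solving the control problem on the extended interval with a carefully chosen control, and then using the contractive ($L^2$-decay) structure of the Navier-Stokes flow to wash out the dependence on the initial datum $x$ over a long initial time window $[0,T_0]$. The key idea is to split the interval $[0,T_0+\bar T]$ into a long \emph{relaxation phase} $[0,T_0]$, during which we set the control to zero (so that all trajectories, regardless of their starting point $x$ in the ball $\{|x|_\rH\le R\}$, are driven close to the origin), followed by the \emph{steering phase} $[T_0,T_0+\bar T]$, on which we reuse (a time-shifted copy of) the control $\bar\varphi$ that steers $0$ to $\bar x$ via $\bar z$.

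\textbf{Construction of $\varphi_0$.} First I would define
\[
\varphi_0(t):=\begin{cases} 0, & t\in[0,T_0],\\ \bar\varphi(t-T_0), & t\in(T_0,T_0+\bar T].\end{cases}
\]
Since the control vanishes on $[0,T_0]$, we immediately get
\[
\tfrac12|\varphi_0|^2_{L^2(0,T_0+\bar T;\rH)}=\tfrac12|\bar\varphi|^2_{L^2(0,\bar T;\rH)}\le \rU(\bar x)+\tfrac\gamma4,
\]
which is precisely \eqref{cb100}. This choice costs nothing in the energy, so the only thing left is to verify the terminal estimate \eqref{cb53}, uniformly over $|x|_\rH\le R$.

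\textbf{The relaxation estimate.} On $[0,T_0]$ the controlled equation \eqref{cb52} reduces to the uncontrolled Navier-Stokes equation (right-hand side zero). Applying \eqref{cb24} from the earlier lemma with $f=0$, $T=0$, gives
\[
|z_x(\varphi_0)(T_0)|_{\rH}\le \ex{-\lambda_1 T_0}|x|_{\rH}\le \ex{-\lambda_1 T_0}R.
\]
Thus, by choosing $T_0$ large, the state at time $T_0$ can be made as small as we like in $\rH$, uniformly in $|x|_\rH\le R$; denote this smallness by $\eta:=\ex{-\lambda_1 T_0}R$. On the steering phase, the trajectory $t\mapsto z_x(\varphi_0)(T_0+t)$ and the trajectory $\bar z$ both solve the controlled equation with the same control $\bar\varphi$, but with initial data $z_x(\varphi_0)(T_0)$ (of size $\le\eta$) and $0$ respectively. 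The final step is therefore a \emph{continuous dependence on initial data} estimate for the controlled NSE over the fixed interval $[0,\bar T]$: I would invoke the well-posedness/continuity theory recalled around \eqref{cb4} and in Theorem \ref{brc.teo.4.2}'s underlying estimates to conclude that
\[
\sup_{|x|_\rH\le R}\bigl|z_x(\varphi_0)(T_0+\bar T)-\bar z(\bar T)\bigr|_{\rH}\le C(\bar T,R,\bar\varphi)\,\eta,
\]
where the constant depends only on $\bar T$, $\bar\varphi$ and an a priori bound on the two solutions. Since $\bar z(\bar T)=\bar x$, choosing $T_0$ large enough that $C(\bar T,R,\bar\varphi)\,\ex{-\lambda_1 T_0}R\le\delta/2$ yields \eqref{cb53}.

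\textbf{Main obstacle.} The delicate point is the continuous dependence estimate on the steering phase: the difference $w:=z_x(\varphi_0)(T_0+\cdot)-\bar z$ solves a linearized NSE with a quadratic perturbation $\rB(z_x,z_x)-\rB(\bar z,\bar z)$, and controlling the energy of $w$ requires a Gronwall argument whose exponential constant depends on the $L^2(0,\bar T;\rV)$-norms of the two solutions. One must therefore first secure a uniform a priori bound on $z_x(\varphi_0)$ over $|x|_\rH\le R$ — here \eqref{cb3}/\eqref{cb4} give exactly such a bound since $|Q\varphi_0|_{L^2(0,T_0+\bar T;\rH)}$ is controlled by the energy in \eqref{cb100} and the initial data lies in a ball — so that the Gronwall constant is uniform in $x$. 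With that uniform bound in hand, the estimate $|w(\bar T)|_\rH\le C\,|w(0)|_\rH=C\eta$ follows, and the freedom to take $T_0\to\infty$ (making $\eta\to0$) closes the argument. The only care needed is that $T_0$ is chosen \emph{after} fixing $\delta$, $R$, $\bar T$ and $\bar\varphi$, which the statement permits.
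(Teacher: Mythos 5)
Your proposal is correct and follows essentially the same route as the paper: zero control on a long relaxation interval $[0,T_0]$ so that \eqref{cb24} drives every initial state $|x|_{\rH}\le R$ near the origin, followed by the time-shifted control $\bar\varphi$ on $[T_0,T_0+\bar T]$, with \eqref{cb100} holding for free since the added control is zero. The only cosmetic difference is that the paper invokes the qualitative continuous dependence of the controlled flow on initial data (fixing a radius $\rho$ first and then choosing $T_0$ with $R\,\ex{-\lambda_1 T_0}\le\rho$), whereas you derive a quantitative Lipschitz bound via Gronwall and let $T_0\to\infty$ — both close the argument in the same way.
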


\begin{proof} Let us assume that $\gamma, \bar{T}>0$, $\bar{x} \in\,\rH$,  $\bar{\varphi} \in\,L^2(0,\bar{T};\rH)$
and $\bar{z} \in\,C([0,\bar{T}];\rH)$ satisfy the assumptions of our Lemma. Let us fix $\delta>0$ and $R>0$.

\medskip

Since, as it is well known,\footnote{For a proof, see e.g. \cite[Theorem 4.6]{Brz+Li_2006}.}   the solution of problem \eqref{cb50} depends continuously on the initial condition in $C([0,\bar{T}];\rH)$, we infer there exists $\rho>0$ such that
\begin{equation}
\label{cb75}
|y_0|_{\rH}\leq \rho\Longrightarrow |y_{y_0}(\bar{\varphi})(\bar{T})-\bar{x}|_{\rH}\leq \frac\delta 2,\end{equation}
where $y_{y_0}(\bar{\varphi}) \in\,C([0,\bar{T}];\rH)$ is the solution of the problem
\[y^\prime(t)+\rA y(t)+B(y(t),y(t))=Q\bar{\varphi}(t),\ \ t \in\,[0,\bar{T}],\ \ \ \ y(0)=y_0.\]

Now, let us consider a solution $u_x \in \,C([0,T_0];\rH)$ of  the homogenous Navier-Stokes equation
\begin{equation}
\label{cb55}
u^\prime(t)+\rA u(t)+\rB (u(t),u(t))=0,\ \ \ \ u(0)=x.
\end{equation}
According to \eqref{cb24}, we have
\[|u_x(t)|_{\rH}\leq \ex{-\lambda_1 t}|x|_{\rH},\ \ \ \ t\geq 0.\]
Hence, if we choose $T_0>0$ such that
$R \ex{-\lambda_1 T_0}\leq \rho,$
we have
\begin{equation}
\label{cb74}
\sup_{|x|_{\rH}\leq R}|u_x(T_0)|_{\rH}\leq \rho.\end{equation}
Next, let us   define a control $\varphi_0 \in L^2(0,T_0+\bar{T};\rH)$ by setting

\[\varphi_0(t)=\begin{cases}
0,& t \in\,[0,T_0],\\
\bar{\varphi}(t-T_0),  &  t \in\,[T_0,T_0+\bar{T}],
\end{cases}\]
and next let us  fix $ x \in \rH$ such that $|x|_{\rH}\leq R$. Then, the function $z \in\,C([0,T_0+\bar{T}];\rH)$  defined by
\[z(t)=\begin{cases}
u_x(t), &  t \in\,[0,T_0],\\
y_{u_x(T_0)}(\bar{\varphi})(t-T_0),  &  t \in\,[T_0,T_0+\bar{T}],
\end{cases}\]
is the unique solution to   problem
\[z^\prime(t)+\rA z(t)+\rB(z(t),z(t))=Q\varphi_0(t),\ \ \ \ t \in\,[0,T_0+\bar{T}],\ \ \ \ z(0)=x,\]
In particular we infer that  $z=z_x(\varphi_0)$. \\
Since
moreover $z_x(\varphi_0)(T_0+\bar{T})=y_{u_x(T_0)}(\bar{\varphi})(\bar{T})$ and $|x|_{\rH}\leq R$, due to \eqref{cb74} and \eqref{cb75},  we infer that
\[ |z_x(\varphi_0)(T_0+\bar{T})-\bar{x}|_{\rH}\leq \frac \delta 2.\]
This proves condition \eqref{cb53}. It remains to prove that $\varphi_0$ satisfies \eqref{cb100}. This however follows directly from the definition of $\varphi_0$ and assumption \eqref{cb50-1}.\\
\end{proof}

Now, we are ready to prove Theorem \ref{thm-lower}. As  we have proved Lemma \ref{lemma6.2},  its proof  is analogous to the proof of \cite[C.2]{sowers} and \cite[Theorem 6.1]{Cerrai+Roeckner_2005}.

\begin{proof}[Proof of Theorem \ref{thm-lower}] Let us  fix $\delta, \gamma >0$ and $\bar{x} \in\,\rH$.
Without loss of generality, we may  assume that $\rU(\bar{x})<\infty$.
Note, that in view of Theorem \ref{teo4.3}, this implies that $\bar{x}\in D(\rA^{\frac{1+\beta}2})$. Moreover,  by the definitions \eqref{eqn-B4-QP} for the quasi-potential $\rU$ and \eqref{eqn-B5} for the energy, we infer that
there exists  $\bar{T}>0$, a control $\bar{\varphi} \in\,L^2(0,\bar{T};\rH)$ and  a function $\bar{z} \in\,C([0,\bar{T};\rH)$ such that
 \[\frac 12|\bar{\varphi}|_{L^2(0,\bar{T};\rH)}^2\leq \rU(\bar{x})+\frac \gamma 4\]
and $\bar{z}$ is a solution to  the problem
\[z^\prime(t)+\rA z(t)+B(z(t),z(t))=Q\varphi(t),\ \ \ z(0)=0,\ \ \ z(T)=\bar{x}.\]

By  \eqref{cb7}
\delc{\[\lim_{\eps\to 0}\nu_\eps\left(|x|_{\rH}\leq R\right)=1.\]}
we can find $\bar{R}>0$, sufficiently large, and $\eps_1>0$ such that
\begin{equation}
\label{cb7+1}
\nu_\eps (B_{\rH}(0,\bar{R}) \geq 1-(1- \ex{-\frac{\gamma}{2}})=\ex{-\frac{\gamma}{2}}, \;\;\;\mbox{$\eps \in (0,\eps_1]$}.
\end{equation}
Note that trivially, the above implies that
\begin{equation}
\label{cb7+2}
\nu_\eps (B_{\rH}(0,\bar{R}) \geq \ex{-\frac{\gamma}{2\eps}}, \;\;\;\mbox{$\eps \in (0,1 \wedge \eps_1]$}.
\end{equation}

With all the data given and constructed, we can apply  Lemma \ref{lemma6.2} and  we can find
$T_0>0$  and $\varphi_0 \in L^2(0,T_0+\bar{T};\rH)$ such that
\begin{equation*}
\frac 12\,|\varphi_0|^2_{L^2(0,T_0+\bar{T};\rH)}\leq \rU(\bar{x})+\frac\gamma 4
\end{equation*}
and
\begin{equation*}
\sup_{|x|_{\rH}\leq R}\left|z_x(\varphi_0)(T_0+\bar{T})-\bar{x}\right|_{\rH}\leq \frac \delta 2,
\end{equation*}
where  $z_x(\varphi_0) \in C([0,T_0+\bar{T}];\rH)$ is the  solution of the control problem \eqref{cb52}.
Let us recall that for $x\in \rH$ and $\eps>0$,  the unique solution to the stochastic problem \eqref{eqn-SNSE-eps} is denoted by $u^x_\eps$.

Now, since by Theorem \ref{brc.teo.4.2},  the family $\{u^x_\eps\}_{\eps>0}$ satisfies the uniform large deviation principle in $C([0,T_0+\bar{T}];\rH)$,
  there exists $\eps_2>0$ such that for $|x|_{\rH}\leq \bar{R}$ and all $\eps \in (0,\eps_2]$,
\begin{equation}\label{ineq-ld}\mathbb{P}\left(|u^x_\eps-z_x(\varphi_0)|_{C([0,T_0+\bar{T}];\rH)}<\frac\delta2\right)\geq \ex{-\frac{|\varphi_0|^2_{L^2(0,T_0+\bar{T};\rH)}+\frac{\gamma}{2}}{2\eps}}\geq \ex{-\frac{\rU(\bar{x})+\frac{\gamma}{2}}\eps}.
\end{equation}

Let us  fix $x\in \rH$ such that $\vert x\vert_{\rH} \leq \bar{R}$. Then by inequality \eqref{cb53} we have

\[\begin{array}{l}
\ds{|u^x_\eps(T_0+\bar{T})-\bar{x}|_{\rH}\leq |u^x_\eps(T_0+\bar{T})-z_x(\varphi_0)(T_0+\bar{T})|_{\rH}+|z_x(\varphi_0)(T_0+\bar{T})-\bar{x}|_{\rH}}\\
{\;}\\
\ds{\leq |u^x_\eps(T_0+\bar{T})-z_x(\varphi_0)(T_0+\bar{T})|_{\rH}+\frac \delta 2.}
\end{array}\]
This implies that
\[
 |u^x_\eps(T_0+\bar{T})-z_x(\varphi_0)(T_0+\bar{T})|_{\rH}<\frac \delta 2 \then |u^x_\eps(T_0+\bar{T})-\bar{x}|_{\rH} < \delta.
\]

Therefore, since  $\nu_\eps$ is an invariant measure for the Markov process $u_x^\eps$,  we  infer that
\[\begin{array}{l}
\ds{\nu_\eps\big( B_{\rH}(\bar{x},\delta) \big)=\nu_\eps\left( |x-\bar{x}|_\rH<\delta\right)=\int_\rH\mathbb{P}\left(|u^x_\eps(T_0+\bar{T})-\bar{x}|_{\rH}<\delta\right)\,\nu_\eps(dx)}\\
{\;}\\
\ds{\geq \int_\rH\mathbb{P}\left(|u^x_\eps(T_0+\bar{T})-z_x(\varphi_0)(T_0+\bar{T})|_{\rH}<\frac\delta2\right)\,\nu_\eps(dx)}\\
{\;}\\
\ds{\geq \int_\rH\mathbb{P}\left(|u^x_\eps-z_x(\varphi_0)|_{C([0,T_0+\bar{T}];\rH)}<\frac\delta2\right)\,\nu_\eps(dx)}\\
{\;}\\
\ds{\geq \int_{B_{\rH}(0,\bar{R})}\mathbb{P}\left(|u^x_\eps-z_x(\varphi_0)|_{C([0,T_0+\bar{T}];\rH)}<\frac\delta2\right)\,\nu_\eps(dx).}
\end{array}\]
Applying \eqref{ineq-ld} we infer
that for $\eps \in (0,\eps_2]$,
\[\nu_\eps\big( B_{\rH}(\bar{x},\delta) \big)\geq \nu_\eps(B_{\rH}\big(0,\bar{R})\big)\ex{-\frac{\rU(\bar{x})+\frac{\gamma}{2}}\eps},\]
To conclude the proof, let us take $\eps_0:=\min\{1,\eps_1,\eps_2\}$. Then, by \eqref{cb7+2}, we infer that  for $\eps \in (0,\eps_0]$,
\[
\nu_\eps\big( B_{\rH}(\bar{x},\delta) \big) \geq  \ex{-\frac{\gamma}{2\eps}} \ex{-\frac{\rU(\bar{x})+\frac{\gamma}{2}}\eps}=\ex{-\frac{\rU(\bar{x})+{\gamma}}\eps}.
\]
This completes the proof of Theorem \ref{thm-lower}.
\end{proof}

\bigskip

\section{Upper bounds}\label{sec-upper1}

Let us recall here that $K_s$ is the level set of the quasipotential $\rU$, as defined in \eqref{eqn-K}, that is
\[K_s:=\{ x \in \rH: \rU(x) \leq s\}.\]

\begin{theorem}
\label{thm-upper}
For all $\delta, \gamma>0$ and   $s\geq 0$, there exists $\eps_0>0$ such that
\begin{equation}
\label{cb42}
\nu_\eps\left(\{ x \in \rH: \text{ dist}_\rH(x, K_s)\geq \delta\}\right)\leq \ex{-\frac{s-\gamma}{\eps}},\ \ \ \ \eps\leq \eps_0.\end{equation}
\end{theorem}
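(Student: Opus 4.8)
The plan is to run the classical Freidlin--Wentzell scheme for invariant measures, feeding in the three ingredients already available: the exponential tightness of $\{\nu_\eps\}$ from Theorem \ref{thm-exponential}, the invariance identity \eqref{eqn-invariant measure}, and the uniform large deviation principle for $\{u^x_\eps\}$ from Theorem \ref{brc.teo.4.2}. Write $D:=\{x\in\rH:\ \mathrm{dist}_\rH(x,K_s)\geq\delta\}$; since $x\mapsto \mathrm{dist}_\rH(x,K_s)$ is continuous, $D$ is closed in $\rH$. For a horizon $T>0$ and a radius $R>0$ to be chosen, the invariance of $\nu_\eps$ gives
\[
\nu_\eps(D)=\int_{\rH}\mathbb{P}\big(u^x_\eps(T)\in D\big)\,\nu_\eps(dx)\leq \nu_\eps\big(B_{\rH}^{\rm c}(0,R)\big)+\sup_{|x|_\rH\leq R}\mathbb{P}\big(u^x_\eps(T)\in D\big).
\]
I would bound the first term by Theorem \ref{thm-exponential}, applied with exponent $s$ and $R=R_s$, so that it does not exceed $\ex{-s/\eps}$. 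For the second term the set $\{\phi\in C([0,T];\rH):\phi(T)\in D\}$ is closed, so the uniform upper bound of Theorem \ref{brc.teo.4.2} yields
\[
\limsup_{\eps\to0}\ \eps\log\sup_{|x|_\rH\leq R}\mathbb{P}\big(u^x_\eps(T)\in D\big)\leq -m_R(T),\qquad m_R(T):=\inf\big\{S_T(\phi):\ |\phi(0)|_\rH\leq R,\ \phi(T)\in D\big\}.
\]

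Everything then reduces to the deterministic lower bound $m_R(T)\geq s-\tfrac{3\gamma}{4}$ for $T$ large, and this is the core of the argument. Let $\phi\in C([0,T];\rH)$ be a competitor with $|\phi(0)|_\rH\leq R$, $\phi(T)\in D$ and, without loss, $S_T(\phi)\leq s$ (otherwise there is nothing to prove); set $f:=\cH(\phi)$, so that by \eqref{ineq-useful} one has $|f|^2_{L^2(0,T;\rH)}\leq 2\Vert Q\Vert^2_{\mathcal L(\rH,\rH)}\,S_T(\phi)\leq 2\Vert Q\Vert^2_{\mathcal L(\rH,\rH)}\,s$. Integrating the energy inequality \eqref{cb23} over $[0,T]$ produces the $T$-independent bound $\int_0^T|\phi(t)|^2_{\rV}\,dt\leq C_0(R,s)$; since $S_T(\phi)<\infty$ forces $f\in L^2(0,T;D(\rA^{\frac\alpha2}))$, the instantaneous smoothing of the equation (Lemma \ref{lem-clar-2} and Proposition \ref{prop-NSEs-global}) places $\phi(t)$ in $D(\rA^{\frac{\alpha+1}2})$ for every $t>0$ and, via the corresponding higher-order a priori estimates, also yields a $T$-independent bound on $\int_{\tau_0}^{T}|\phi(t)|^2_{D(\rA^{\frac{\alpha+1}2})}\,dt$ for any fixed $\tau_0>0$. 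Consequently, once $T$ is large enough there is a time $\tau'\in(\tau_0,T)$ at which $|\phi(\tau')|_{D(\rA^{\frac{\alpha+1}2})}$ is as small as we wish, uniformly over all such $\phi$.

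At this point I would concatenate. Using the representation \eqref{eqn-A1} of the quasi-potential, and noting $\phi(\tau')\in D(\rA^{\frac{\alpha+1}2})$ so that $\rU(\phi(\tau'))<\infty$ by Theorem \ref{teo4.3}(1), pick a near-optimal $v\in\mathcal{X}_{\phi(\tau')}$ with $S_{-\infty}(v)\leq \rU(\phi(\tau'))+\eta$ and glue it to the tail $\phi|_{[\tau',T]}$, rescaling time so that the resulting curve $u$ lives on $(-\infty,0]$, satisfies $u(0)=\phi(T)$ and $u\in\mathcal{X}_{\phi(T)}$; letting $\eta\downarrow0$ this gives
\[
\rU(\phi(T))\leq \rU(\phi(\tau'))+S_{\tau',T}(\phi)\leq \rU(\phi(\tau'))+S_T(\phi).
\]
Because $\rU$ is continuous on $D(\rA^{\frac{\alpha+1}2})$ with $\rU(0)=0$ (Theorem \ref{teo4.3}(4)), the smallness of $\phi(\tau')$ secured above forces $\rU(\phi(\tau'))\leq\tfrac{\gamma}{4}$, while $\phi(T)\in D\subset K_s^{\rm c}$ gives $\rU(\phi(T))>s$. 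Hence $S_T(\phi)>s-\tfrac{\gamma}{4}$, and taking the infimum yields $m_R(T)\geq s-\tfrac{\gamma}{4}$.

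Finally I would assemble the two estimates: for $T$ fixed as above and $\eps$ small the supremum term is at most $\ex{-(s-\frac{\gamma}{2})/\eps}$, while the tail term is at most $\ex{-s/\eps}\leq \ex{-(s-\frac{\gamma}{2})/\eps}$; their sum is then at most $2\ex{-(s-\frac{\gamma}{2})/\eps}\leq \ex{-(s-\gamma)/\eps}$ once $\eps\leq\gamma/(2\log 2)$, which fixes $\eps_0$ and gives \eqref{cb42}. I expect the genuine difficulty to be concentrated in the middle step: establishing the strong, $T$-uniform relaxation estimate that produces a time $\tau'$ at which $\phi$ is small in the $D(\rA^{\frac{\alpha+1}2})$-norm (merely $\rH$- or $\rV$-smallness is not enough, since $\rU$ is finite and continuous only on $D(\rA^{\frac{\alpha+1}2})$), and in checking that the constants in the relaxation and smoothing estimates are uniform over all competitor paths with $|\phi(0)|_\rH\leq R$ and $S_T(\phi)\leq s$.
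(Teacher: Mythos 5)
Your outer architecture (invariance of $\nu_\eps$, splitting the integral over $B_{\rH}(0,R_s)$ and its complement, Theorem \ref{thm-exponential} for the tail, the uniform LDP upper bound on the ball, and the final bookkeeping with $\eps\le\gamma/(2\log 2)$) coincides with the paper's, and the concatenation inequality $\rU(\phi(T))\le\rU(\phi(\tau'))+S_{\tau',T}(\phi)$ together with $\rU(\phi(T))>s$ for $\phi(T)\in D$ is a legitimate way to close the deterministic estimate. Where you genuinely diverge is in how that deterministic estimate is obtained. You want a single lemma: every competitor with $|\phi(0)|_{\rH}\le R$ and $S_T(\phi)\le s$ passes, for $T$ large, through a point that is small in the $D(\rA^{\frac{\alpha+1}2})$-norm, so that continuity of $\rU$ at $0$ applies. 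The paper instead uses two lemmas and the Markov property: Lemma \ref{lem-73} shows that a path starting in $B_{\rH}(0,R_s)$ which avoids the small $\rH$-ball $B_{\rH}(0,\lambda)$ at every integer time up to $\bar n$ must accumulate action $>s$; the process is then restarted at the first integer time it enters $B_{\rH}(0,\lambda)$; and Lemma \ref{Lem-CR-Lem7.1} shows that a path of action $\le s$ over a long window starting in $B_{\rH}(0,\lambda)$ ends within $\delta$ of $K_s$. The crucial point is that Lemma \ref{Lem-CR-Lem7.1} is proved by a \emph{soft} compactness argument: one extracts a limit trajectory on $(-\infty,0]$ with action $\le s$ and $|u(t_n)|_{\rH}\to 0$ along a subsequence, and only for that limit does one invoke the regularity theory (Lemma \ref{lemma4.2} and Appendix \ref{section-negative}), whose hypotheses include decay at $-\infty$. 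The paper therefore never needs a quantitative smoothing estimate for finite-time competitors; it only needs smallness in $\rH$, which is cheap.

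Your route, by contrast, stands or falls on the asserted $T$-independent bound on $\int_{\tau_0}^{T}|\phi(t)|^2_{D(\rA^{\frac{\alpha+1}2})}\,dt$, uniform over all competitors, and this is not proved. It does not follow from Lemma \ref{lem-clar-2} or Proposition \ref{prop-NSEs-global}, whose constants are not claimed to be uniform in the length of the time interval, nor from Appendix \ref{section-negative}, whose estimates are bootstrapped from the hypothesis $\lim_{t\to-\infty}|z(t)|_{\rH}=0$. The bound is in fact provable on the torus: at level zero, \eqref{ineq-useful} and \eqref{cb4} give $\int_0^T|\phi|_{\rV}^2\le C(R,s)$ because the action controls $|f|^2_{L^2(0,T;\rH)}$ globally in $T$; at level one the torus-specific cancellation \eqref{eqn-A-B} removes the nonlinear term so no Gronwall factor appears; and the higher levels require redoing the induction of Propositions \ref{prop_BCF_2013-infty}--\ref{prop_BCF_2013-inftythree} forward in time with rough initial data, selecting a good starting time at each stage by a mean-value argument. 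So the gap is fillable, but it is the analytic heart of the theorem, it is precisely where the restriction to periodic boundary conditions enters, and the paper's two-lemma decomposition is designed to avoid having to prove it.
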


Before proceeding with the proof of Theorem \ref{thm-upper}, we state two auxiliary results, whose proofs are postponed till  next section.

\medskip

\begin{lemma}\label{Lem-CR-Lem7.1}
For all $\delta>0$ and $s>0$,   there exist  $\lambda=\lambda(\delta,s)>0$ and $\bar{T}=\bar{T}(\delta,s)>0$ such that for every $t\geq \bar{T}$ and  $z\in C([-t,0];\rH)$
\begin{equation}
\vert z(-t)\vert_{\rH}<\lambda,\ \ S_{-t}(z)\leq s\Longrightarrow
\text{dist}_{\rH}(z(0),K_s)< \delta.\end{equation}
\end{lemma}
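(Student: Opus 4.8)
The plan is to replace the initial datum of the given profile by the origin, keeping the control unchanged, and then to estimate the displacement of the terminal value by a dissipative stability argument.

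First I would record that, since $S_{-t}(z)\le s<\infty$, the function $f:=\cH(z)=z^\prime+\rA z+\rB(z,z)$ belongs to $L^2(-t,0;\rH)$ and, by \eqref{ineq-useful}, satisfies $|f|^2_{L^2(-t,0;\rH)}\le 2\Vert Q\Vert^2_{\mathcal{L}(\rH,\rH)}\,s$. Let $\tilde z$ be the unique solution (see \cite[Theorems III.3.1/2]{Temam_2001}) of the controlled problem $\tilde z^\prime+\rA\tilde z+\rB(\tilde z,\tilde z)=f$ on $[-t,0]$ with $\tilde z(-t)=0$. By construction $\cH(\tilde z)=f=\cH(z)$, so $S_{-t}(\tilde z)=S_{-t}(z)\le s$; since $\tilde z\in C_{0,\tilde z(0)}([-t,0];\rH)$, the definition \eqref{eqn-B4-QP} of $\rU$ yields $\rU(\tilde z(0))\le S_{-t}(\tilde z)\le s$, that is $\tilde z(0)\in K_s$. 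It therefore suffices to show that $|z(0)-\tilde z(0)|_{\rH}<\delta$ for a suitable choice of $\lambda$.

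Next I would set $\eta:=z-\tilde z$, which solves $\eta^\prime+\rA\eta+\rB(\eta,z)+\rB(\tilde z,\eta)=0$ with $\eta(-t)=z(-t)$. Testing with $\eta$ and using \eqref{eqn:b02} to discard $\lb\rB(\tilde z,\eta),\eta\rb$, together with the last inequality in \eqref{eqn:4.0a} (which gives $|b(\eta,z,\eta)|\le c\,|\eta|_{\rH}|\eta|_{\rV}|z|_{\rV}$) and Young's inequality, I obtain $\frac{d}{dr}|\eta(r)|^2_{\rH}\le c\,|z(r)|^2_{\rV}\,|\eta(r)|^2_{\rH}$. Gronwall's lemma then gives
\[
|z(0)-\tilde z(0)|^2_{\rH}=|\eta(0)|^2_{\rH}\le |z(-t)|^2_{\rH}\,\ex{c\int_{-t}^0|z(r)|^2_{\rV}\,dr}.
\]
The crucial point is that the exponent is bounded uniformly in $t$: applying the energy estimate \eqref{cb4} to $z$ on $[-t,0]$ together with the bound on $|f|_{L^2(-t,0;\rH)}$ above, one gets $\int_{-t}^0|z(r)|^2_{\rV}\,dr\le |z(-t)|^2_{\rH}+\lambda_1^{-1}|f|^2_{L^2(-t,0;\rH)}\le \lambda^2+2\lambda_1^{-1}\Vert Q\Vert^2_{\mathcal{L}(\rH,\rH)}\,s=:M(s)$. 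Hence $|z(0)-\tilde z(0)|^2_{\rH}\le \lambda^2\,\ex{cM(s)}$, and it is enough to pick $\lambda=\lambda(\delta,s)$ so small that $\lambda\,\ex{cM(s)/2}<\delta$; then $\text{dist}_{\rH}(z(0),K_s)\le|z(0)-\tilde z(0)|_{\rH}<\delta$.

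This direct argument is in fact uniform in $t$, so any $\bar T>0$ works. Should one prefer an approach that exploits the smallness of the starting value through the long-time behaviour, the same conclusion follows by contradiction: given sequences with $|z_n(-t_n)|_{\rH}\to0$, $t_n\to\infty$ and $S_{-t_n}(z_n)\le s$ but $\text{dist}_{\rH}(z_n(0),K_s)\ge\delta$, one extends each $z_n$ to an element of $\mathcal{X}$ by prepending a cheap path from $0$ to $z_n(-t_n)$ and setting it equal to $0$ further back, so that $S_{-\infty}$ of the extension is at most $s+o(1)$; compactness of the level sets and lower semicontinuity of $S_{-\infty}$ (Proposition \ref{prop-action-lsc+compact}), together with the representation \eqref{eqn-A1}, produce a limit $z\in\mathcal{X}$ with $\rU(z(0))\le S_{-\infty}(z)\le s$ and $z_n(0)\to z(0)\in K_s$, a contradiction. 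I expect the main technical obstacle to be precisely the uniformity in $t$ of the Gronwall exponent in the direct proof, which is exactly what the dissipativity encoded in \eqref{cb4} supplies; in the alternative proof the corresponding difficulty is the construction of the extensions with vanishing extra cost.
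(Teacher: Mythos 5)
Your main, direct argument is correct, but it follows a genuinely different route from the paper's. The paper argues by contradiction and compactness: assuming sequences $z_n$ on $[-n,0]$ with $S_{-n}(z_n)\leq s$, $|z_n(-n)|_{\rH}\to 0$ and $\mbox{dist}_{\rH}(z_n(0),K_s)\geq\delta$, it derives uniform a priori bounds on each window $[-k,0]$, extracts by a diagonal/compactness argument a limiting trajectory $u$ on $(-\infty,0]$ with $S_{-\infty}(u)\leq s$ vanishing along a sequence $t_n\downarrow-\infty$, and then invokes Lemma \ref{lemma4.2} together with the representation \eqref{eqn-A1} of $\rU$ to conclude $u(0)\in K_s$, contradicting $z_n(0)\to u(0)$. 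You instead compare $z$ with the trajectory $\tilde z$ driven by the \emph{same} control but started from $0$, whose endpoint lies in $K_s$ directly by the definition \eqref{eqn-B4-QP} of $\rU$, and you control $|z(0)-\tilde z(0)|_{\rH}$ by the classical 2D Navier--Stokes continuous-dependence estimate (fourth line of \eqref{eqn:4.0a} plus Gronwall), with the Gronwall exponent bounded uniformly in $t$ via the energy inequality \eqref{cb4}; the identification of $z$ with the variational solution needed to run these energy identities is supplied by Proposition \ref{prop-very weak}, exactly as the paper itself tacitly uses for \eqref{bc2}. Your approach buys an explicit, quantitative $\lambda(\delta,s)$ and uniformity in $t$ (so any $\bar T$ works), and it bypasses the compactness machinery, Lemma \ref{lemma4.2}, and the characterization \eqref{eqn-A1} entirely; the paper's softer argument, on the other hand, is the template reused for Lemma \ref{lem-73} and does not require writing down the stability estimate. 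One caveat: your sketched ``alternative'' contradiction proof does not work as stated, because prepending a ``cheap path'' from $0$ to $z_n(-t_n)$ costs at least $\rU(z_n(-t_n))$, and smallness of $|z_n(-t_n)|_{\rH}$ does not make this small (indeed $\rU$ is finite only on $D(\rA^{\frac{\alpha+1}2})$ and is only known to be continuous in that stronger topology); this is precisely the difficulty the paper's limit-extraction argument is designed to avoid. Since you present this only as an aside, the main proof stands.
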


\begin{lemma}\label{lem-73}
\label{lemma7.3}
For all $s,\delta, r>0$, there exists $\bar{n} \in\,\mathbb{N}$ such that
\[\beta_{\bar{n}}:=\inf\,\left\{S_{\bar{n}}(u)\ :\ u \in\,H_{r, s,\delta}(\bar{n})\right\}>s,\]
where
for each $n \in\,\mathbb{N}$, $s>0$, $\delta>0$ and $ r>0$, the set $H_{r, s,\delta}(n)$ is defined by
\begin{equation}\label{eqn-H_r s delta}
\ds{H_{r, s,\delta}(n):=\left\{ u \in\,C([0,n];\rH),\ |u(0)|_{\rH}\leq r,\ |u(j)|_{\rH}\geq \lambda,\ \ j=1,\ldots,n\right\},}
\end{equation}
and $\lambda$ is the constant depending on $s$ and $\delta$, obtained in Lemma \ref{Lem-CR-Lem7.1}.

\end{lemma}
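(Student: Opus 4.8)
The plan is to exploit the fact that a finite action budget yields a bound on the forcing $f:=\cH(u)=u^\prime+\rA u+\rB(u,u)$ in $L^2(0,\bar n;\rH)$ that is \emph{uniform in the length $\bar n$}, and then to use the dissipativity of the Stokes operator (the Poincar\'e constant $\lambda_1$) to conclude that the trajectory must decay toward the origin and therefore cannot stay at height $\lambda$ at all $\bar n$ integer times once $\bar n$ is large. Throughout, fix $s,\delta,r>0$ and let $\lambda=\lambda(\delta,s)>0$ be the constant from Lemma~\ref{Lem-CR-Lem7.1}. Let $u\in H_{r,s,\delta}(\bar n)$; I may assume $S_{\bar n}(u)<\infty$, since otherwise $S_{\bar n}(u)>s$ trivially. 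Then $f\in L^2(0,\bar n;\rH)$ and inequality \eqref{ineq-useful} supplies the key bound
\[\int_0^{\bar n}|f(t)|^2_{\rH}\,dt\leq 2\Vert Q\Vert^2_{\mathcal{L}(\rH,\rH)}\,S_{\bar n}(u),\]
whose right-hand side depends on $\bar n$ only through $S_{\bar n}(u)$.

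First I would record the energy estimate. Exactly as in the derivation of \eqref{cb24}, the differential inequality \eqref{cb23} together with the Poincar\'e inequality $|u|^2_{\rV}\geq\lambda_1|u|^2_{\rH}$ and Gronwall's lemma gives, for every $t\in[0,\bar n]$,
\[|u(t)|^2_{\rH}\leq |u(0)|^2_{\rH}\,\ex{-\lambda_1 t}+\frac1{\lambda_1}\int_0^{t}\ex{-\lambda_1(t-\sigma)}|f(\sigma)|^2_{\rH}\,d\sigma.\]
(The computation of $\tfrac{d}{dt}|u|^2_{\rH}$ is legitimate because $f\in L^2(0,\bar n;\rH)$ and Lemma~\ref{lem-clar-2} provides the regularity of $u$ needed to justify it.) Applying this at each integer time $t=j$, $j=1,\dots,\bar n$, using $|u(0)|_{\rH}\leq r$, summing over $j$, interchanging the sum and the integral by Tonelli, and bounding the two geometric sums by $C_1:=(\ex{\lambda_1}-1)^{-1}$ and $C_2:=(1-\ex{-\lambda_1})^{-1}$, I obtain
\[\sum_{j=1}^{\bar n}|u(j)|^2_{\rH}\leq r^2C_1+\frac{C_2}{\lambda_1}\int_0^{\bar n}|f(\sigma)|^2_{\rH}\,d\sigma.\]

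On the other hand, membership of $u$ in $H_{r,s,\delta}(\bar n)$ forces $|u(j)|_{\rH}\geq\lambda$ for $j=1,\dots,\bar n$, so the left-hand side is at least $\bar n\lambda^2$. Combining this with the two displays above yields
\[\bar n\,\lambda^2\leq r^2C_1+\frac{2C_2\Vert Q\Vert^2_{\mathcal{L}(\rH,\rH)}}{\lambda_1}\,S_{\bar n}(u),\]
and hence the \emph{uniform} lower bound
\[S_{\bar n}(u)\geq \frac{\lambda_1}{2C_2\Vert Q\Vert^2_{\mathcal{L}(\rH,\rH)}}\Big(\bar n\,\lambda^2-r^2C_1\Big).\]
Since the right-hand side is independent of $u$ and tends to $+\infty$ as $\bar n\to\infty$, I would choose $\bar n$ so large that it strictly exceeds $s$; then $S_{\bar n}(u)>s$ for every $u\in H_{r,s,\delta}(\bar n)$, and the bound being uniform gives $\beta_{\bar n}>s$ as claimed.

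I expect the only genuinely delicate point to be conceptual rather than computational: recognizing that the finiteness of the action controls the forcing $f$ in $L^2(0,\bar n;\rH)$ \emph{uniformly in $\bar n$}, so that dissipation drives the trajectory into a neighborhood of the origin except on a time-set of bounded total length, which is incompatible with $|u(j)|_{\rH}\geq\lambda$ at every integer time when $\bar n$ is large. The bookkeeping of the constants $C_1,C_2$, the Fubini/Tonelli interchange, and the passage from ``every $u$ satisfies $S_{\bar n}(u)>s$'' to ``the infimum is $>s$'' (which is safe precisely because the lower bound is uniform) are all routine.
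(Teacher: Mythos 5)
Your proposal is correct, but it follows a genuinely different route from the paper's. The paper argues by contradiction: assuming there exist $u_n \in H_{r,s,\delta}(n)$ with $S_n(u_n)\le s+1$ for every $n$, it uses the a priori bound \eqref{cb3} and the additivity of the action over consecutive time blocks of length $\bar k$ to reduce the claim to showing that a single-block quantity $\gamma_{\bar k}:=\inf\bigl\{S_{\bar k}(u): |u(0)|_{\rH}\le c_{s,r}\wedge r,\ |u(\bar k)|_{\rH}\ge\lambda\bigr\}$ is strictly positive; that last step is a compactness argument (if $S_{\bar k}(v_n)\to 0$, the forcings tend to $0$ in $L^2$, the trajectories converge to the uncontrolled flow $z_{\bar x}$, and \eqref{cb24} forces $|z_{\bar x}(\bar k)|_{\rH}\le\lambda/2<\lambda$, a contradiction). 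You instead prove a direct quantitative lower bound: summing the Gronwall estimate behind \eqref{cb24} over the integer times $j=1,\dots,\bar n$ and invoking \eqref{ineq-useful}, you obtain
\[
\bar n\,\lambda^2\ \le\ \sum_{j=1}^{\bar n}|u(j)|^2_{\rH}\ \le\ r^2C_1+\frac{2C_2\Vert Q\Vert^2_{\mathcal{L}(\rH,\rH)}}{\lambda_1}\,S_{\bar n}(u),
\]
so $S_{\bar n}(u)$ grows at least linearly in $\bar n$, uniformly over $H_{r,s,\delta}(\bar n)$, and an explicit admissible $\bar n$ can be read off. The interchange of sum and integral and the geometric-series bounds are as routine as you say, and the justification of the energy estimate from time $0$ (via Lemma \ref{lem-clar-2}, uniqueness of very weak solutions, and continuity of $|u(\cdot)|_{\rH}$ at $t=0$) is exactly the same one the paper itself relies on for \eqref{cb3} and \eqref{cb22}, so no new gap is introduced. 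What the paper's softer argument buys is robustness: the contradiction-plus-compactness scheme (inherited from \cite{sowers} and \cite{Cerrai+Roeckner_2005}) works whenever the origin is merely asymptotically stable for the uncontrolled flow, without an explicit exponential rate; for the 2-D Navier--Stokes equations, where the Poincar\'e constant $\lambda_1$ gives that rate for free, your computation is shorter, avoids compactness and the block decomposition entirely, and is fully quantitative.
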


 Assuming Lemmata \ref{Lem-CR-Lem7.1} and  \ref{lemma7.3}, the proof of Theorem \ref{thm-upper} follows the same
line of the proofs of \cite[C.3]{sowers} and  \cite[Theorem 7.3]{Cerrai+Roeckner_2005}.
We give here the proof, with some additional details, for the reader's convenience.

\begin{proof}[Proof of Theorem \ref{thm-upper}]

Let us  fix $\delta>0$, $\gamma>0$ and $s \geq 0$ and let us choose
positive constants $R_s$ and $\eps_s$,
as in Theorem \ref{thm-exponential}.
\medskip

Because  $\nu_\eps$ is an invariant measure  for the Markov process generated by equation \eqref{eqn-SNSE-eps} and
the set $\big\{ x \in\,\rH\ :\ \text{ dist}(x, K_s)\geq \delta\}$ is closed and hence a
Borel subset of $\rH$, we infer that
\begin{eqnarray}\label{eqn-zb-8.1}
   \nu_\eps\left( \big\{ x \in\,\rH\ :\ \text{ dist}(x, K_s)\geq \delta\}\right)&=&
   \int_{H}\mathbb{P}\left( \mbox{dist}\left(u^y_\eps(t),K_s\right)\geq \delta\right)\,\nu_\eps(dy) \\
&&\hspace{-4truecm}\lefteqn{=\int_{B_H^{\rm c}(0,R_s)}\mathbb{P}\left(\mbox{dist}\left(u^y_\eps(t),K_s\right)\geq \delta\right)\,\nu_\eps(dy)
}\nonumber\\
&&\hspace{-2truecm}\lefteqn{
+\int_{B_H(0,R_s)}\mathbb{P}\left(\mbox{dist}\left(u^y_\eps(t),K_s\right)\geq \delta\right)\,\nu_\eps(dy).}
\nonumber
\end{eqnarray}
Thanks to Theorem \ref{thm-exponential}, for any $\eps\leq \eps_s$ we have
\begin{eqnarray}\label{ineq-zb-8.2}
\int_{B_H^{\rm c}(0,R_s)}\mathbb{P}\left(\mbox{dist}\left(u^y_\eps(t),K_s\right)\geq \delta\right)\,\nu_\eps(dy)
&\leq & \ex{-\frac s\eps} .
\end{eqnarray}
Now, in view of Lemma \ref{lemma7.3}, we can pick  $\bar{n}\in \mathbb{N}$ such that
\[
u\in H_{R_s,s,\delta}(\bar{n})\Longrightarrow S_{\bar{n}}(u)  \geq s.
\]

Since the set $H_{R_s,s,\delta}(\bar{n})$ is  closed in  $C([0,n];\rH)$, and since by
Theorem \ref{brc.teo.4.2} the family $\{u^y_{\eps}\}_{\eps >0}$ satisfies the  large deviation principle in
$C([0,n];\rH)$ uniformly with respect to $y \in B_{\rH}(0,R_s)$, we infer that  there exists $\eps_1>0$ such that
\begin{eqnarray}\label{ineq-zb-8.3}
\sup_{y \in B_{\rH}(0,R_s)} \mathbb{P}\left(u^y_\eps \in\,H_{R_s,s,\delta}(\bar{n})\right)\leq \ex{-\frac{s-\gamma/2}\eps},\ \ \ \ \eps\leq \eps_1.
\end{eqnarray}
This implies that for $\eps \in (0,\eps_1)$,
\[
\int_{B_H(0,R_s)} \mathbb{P}\left(\mbox{dist}\left(u^y_\eps(t),K_s\right)\geq \delta,\
u^y_\eps \in H_{R_s,s,\delta}(\bar{n})\right) \,\nu_\eps(dy) \leq \ex{-\frac{s-\gamma/2}\eps}.
\]
Thus, for $\eps \in (0,\eps_1)$,
\begin{eqnarray}\label{ineq-zb-8.4}
\int_{B_H(0,R_s)} \mathbb{P}\left(\mbox{dist}\left(u^y_\eps(t),K_s\right)\geq \delta\right)
&\leq&    \ex{-\frac{s-\gamma/2}\eps}
\\
&&\hspace{-4truecm}{+\int_{B_H(0,R_s)} \mathbb{P}\left(\mbox{dist}\left(u^y_\eps(t),K_s\right)\geq
\delta,\ u^y_\eps \notin H_{R_s,s,\delta}(\bar{n})\right)\,\nu_\eps(dy)
.}
\nonumber
\end{eqnarray}
Thus we only have to deal with the second integral on the RHS of \eqref{ineq-zb-8.4}.

\delc{ \[\begin{array}{l}
\ds{\nu_\eps\left( x \in\,\rH\ :\ \text{ dist}(x, K_s)\geq \delta\right)}\\
{\;}\\
\ds{=\int_{B_H^{\rm c}(0,R_s)}\mathbb{P}\left(\mbox{dist}\left(u^y_\eps(t),K_s\right)\geq \delta\right)\,\nu_\eps(dy)
+\int_{B_H(0,R_s)}\mathbb{P}\left(\mbox{dist}\left(u^y_\eps(t),K_s\right)\geq \delta\right)\,\nu_\eps(dy)}\\
{\;}\\
\ds{\leq \ex{-\frac s\eps}+\int_{B_H(0,R_s)}\mathbb{P}\left(\mbox{dist}\left(u^y_\eps(t),K_s\right)\geq \delta\right)\,\nu_\eps(dy),\ \ \ \ \eps\leq \eps_s.}
\end{array}\]
\[
 \nu\big( B_H(0,R_s)\big) \leq  \ex{-\frac s\eps} \mbox { for all } \eps \in (0,\eps_s).
 \]

Since by Theorem \ref{brc.teo.4.2} the family $\{u^y_{\eps}\}_{\eps >0}$ satisfies a large deviation principle in $C([0,T];\rH)$,
for any $T>0$, with action functional $S_T$, uniformly with respect to $y$ in  bounded subsets of $\rH$, due
to Lemma \ref{lemma7.3} and to the fact that $H_{R_s,s,\delta}(n)$ is a closed subset of $C([0,n];\rH)$, there exist $n \in\,\mathbb{N}$
and $\eps_1>0$ such that\comr{Dear Sandra, can you please be more detailed in explaining the following inequality?}
\[\mathbb{P}\left(u^y_\eps \in\,H_{R_s,s,\delta}(n)\right)\leq \ex{-\frac{s-\gamma/2}\eps},\ \ \ \ \eps\leq \eps_1.\]
This implies that for any $\eps\leq \eps_s\wedge \eps_1$ we have
\begin{equation}
\label{cb40}
\begin{array}{l}
\ds{\nu_\eps\left( x \in\,\rH\ :\ \text{ dist}(x, K(s))\geq \delta\right)}\\
{\;}\\
\ds{\leq \ex{-\frac s\eps}+\ex{-\frac{s-\gamma/2}\eps}
+\int_{|y|_{\rH}\leq R_s}
\mathbb{P}\left(\mbox{dist}\left(u^y_\eps(t),K_s\right)\geq \delta,\ u^y_\eps \notin H_{R_s,s,\delta}(n)\right)\,\nu_\eps(dy).}
\end{array}\end{equation}
}

Let now fix $t \geq  \bar{n}$, $\eps \in (0, \eps_s \wedge \eps_1)$  and $y \in B_{\rH}(0,R_s)$.
In view of the definition of $H_{r,s,\delta}(\bar{n})$, we have that
\begin{equation}\label{eqn-H_r s delta2}
\left\{ u \in\,C([0,n];\rH),|u(0)|_{\rH} \leq  r\right\} \setminus H_{r, s,\delta}(n)=   \bigcup_{j=1}^n \left\{ u \in\,C([0,n];\rH),|u(j)|_{\rH}<  \lambda\right\}.
\end{equation}
Therefore,
because $\vert u^y_\eps(0)\vert_{\rH}= \vert y\vert_{\rH}\leq R_s$, we infer that
\begin{eqnarray*}
&&\hspace{-8truecm}\lefteqn{\Bigl\{\omega \in \Omega: \mbox{dist}\bigl(u^y_\eps(t),K_s\bigr)\geq \delta,\ u^y_\eps \notin H_{R_s,s,\delta}(\bar{n})\Bigr\}}
\\
&&\hspace{-5truecm}
\lefteqn{=\bigcup_{j=1}^n \Bigl\{\omega \in \Omega: \mbox{dist}\bigl(u^y_\eps(t),K_s\bigr)\geq \delta, |u^y_\eps(j)|_{\rH}< \lambda \Bigr\}.}
\end{eqnarray*}

Moreover, by the Markov property of the process $u^y_\eps$, we infer that if $P(\tau,t,dz)$, $0\leq \tau\leq t$, is the transition probability function corresponding to the Markov process
$u^y_\eps(t)$, $t\geq 0$ and $y \in \rH$, then
\begin{eqnarray}\label{ineq-zb-8.5}
\nonumber
&&\hspace{-4truecm}
\lefteqn{\mathbb{P}
\Bigl(\mbox{dist}\bigl(u^y_\eps(t),K_s\bigr)\geq \delta, |u^y_\eps(j)|_{\rH}< \lambda  \Bigr)}
\\
&&\hspace{-2truecm}
\lefteqn{= \int_{ \{|u^y_\eps(j)|_{\rH}< \lambda \} } P(j, y, dz) \mathbb{P}\bigl( \dist( u^z_\eps(t-j),K_s)\geq \delta \bigr)}
\\
&\leq& \sup_{|z|_{\rH}< \lambda} \mathbb{P}\bigl( \dist( u^z_\eps(t-j),K_s)\geq \delta \bigr).
\end{eqnarray}

Therefore,

\delc{Now, due to the Markov property, for any $t\geq \bar{n}$}
\begin{equation}
\label{cb41}
\begin{array}{l}
\hspace{-2truecm}\ds{\int_{\{|y|_{\rH}\leq R_s\}}\mathbb{P}\left(\mbox{dist}\left(u^y_\eps(t),K_s\right)\geq \delta,\ u^y_\eps \notin H_{R_s,s,\delta}(\bar{n})
\right)\,\nu_\eps(dy)}\\
\delc{{\;}\\
\hspace{3truecm}\ds{\leq \int_{|y|_{\rH}\leq R_s}\mathbb{P}\left(\bigcup_{j=1}^{\bar{n}}\left\{\mbox{dist}\left(u^y_\eps(t),K_s\right)\geq \delta,\
|u^y_\eps(j)|_{\rH}\leq \lambda\right\}\right)\,\nu_\eps(dy)}\\
{\;}\\}
\ds{\leq \sum_{j=1}^{\bar{n}}\sup_{z\in B_{\rH}(0,\lambda)}\mathbb{P}\left(\mbox{dist}\left(u^z_\eps(t-j),K_s\right)\geq \delta\right).}
\end{array}\end{equation}

Next, in order to estimate the RHS of the last equality, we fix $z\in B_{\rH}(0,\lambda)$ and  define two auxiliary
sets $ K_s(\lambda,t)$ and $\tilde{K}_s(z,t)$ by

\[K_s(\lambda,t):=\left\{u \in\,C([0,t];\rH)\ :\ S_t(u)\leq s,\ |u(0)|_{\rH}\leq \lambda\right\},\]
and
\[\tilde{K}_s(z,t):=\left\{u \in\,C([0,t];\rH)\ :\ S_t(u)\leq s,\ u(0)=z\right\}.\]	
Since, $\vert z\vert_{\rH} \leq \lambda$, we observe that
\[
\tilde{K}_s(z,t) \subset K_s(\lambda,t).
\]

Moreover, according to  Lemma \ref{Lem-CR-Lem7.1},  there exists $\bar{T}>0$ such that for any $T\geq \bar{T}$
\[\varphi \in\,K_s(\lambda,T)\Longrightarrow \mbox{dist}(\varphi(T),K_s)\leq \frac \delta 2.\]
In what follows we fix $t\geq  \max\{\bar{T},\bar{n}\}$, and we prove that for any $u \in\,C([0,t];\rH)$  such that
\begin{equation}
\label{uff1}
\dist_{C([0,t];\rH)}(u, K_s(\lambda,t))<\frac\delta2,
\end{equation}
we have
 \[\dist(u(t),K_s) <\delta.\]
Indeed,  if
\eqref{uff1} holds, then there exists $\varphi \in\,K_s(\lambda,t)$
such that
\[\dist_{C([0,t];\rH)}(u, \varphi) <\frac\delta2,\]
so that  $\vert u(t)-\varphi(t)\vert_{\rH} <\frac\delta2$.
Hence, by the triangle inequality, we infer that
\[
\dist(u(t),K_s) \leq \vert u(t)-\varphi(t)\vert_{\rH}+\dist(u(t),K_s)< \frac\delta2+\frac\delta2=\delta.
\]

Since, $\tilde{K}_s(z,t) \subset K_s(\lambda,t)$  we deduce that
\[\begin{array}{l}
\ds{\mathbb{P}\left(\mbox{dist}\left(u^z_\eps(t),K_s\right)\geq \delta\right)
\leq \mathbb{P}\left(\mbox{dist}_{C([0,t];\rH)}\left(u^y_\eps,K_s(\lambda,t)\right)>\frac \delta 2\right)}
\\\ds{\leq
\mathbb{P}\left(\mbox{dist}_{C([0,t];\rH)}\left(u^z_\eps,\tilde{K}_s(z,t)\right)>\frac \delta 2\right).}
\end{array}\]

\delc{
Therefore, by the triangular inequality
\[\mbox{dist}\left(u^y_\eps(t),K_s\right)\geq \delta\Longrightarrow
\mbox{dist}\left(u^y_\eps(t),\varphi(t)\right)\geq \frac \delta 2\Longrightarrow
\mbox{dist}_{C([0,t];\rH)}\left(u^y_\eps,K_s(\lambda,t)\right)\geq \frac \delta 2.\]
As we are assuming $|y|_{\rH}\leq \lambda$, this implies

\[\begin{array}{l}
\ds{\mathbb{P}\left(\mbox{dist}\left(u^y_\eps(t),K_s\right)\geq \delta\right)
\leq \mathbb{P}\left(\mbox{dist}_{C([0,t];\rH)}\left(u^y_\eps,K_s(\lambda,t)\right)>\frac \delta 2\right)}\\
{\;}\\
\ds{\leq
\mathbb{P}\left(\mbox{dist}_{C([0,t];\rH)}\left(u^y_\eps,K_s(y,t)\right)>\frac \delta 2\right).}
\end{array}\]
}

Next, as  the set \\
\[\Bigl\{u \in  C([0,t];\rH) : \dist_{C([0,t];\rH)}\left(u_\eps,K_s(y,t)\right)\geq \frac \delta 2\Bigr\}\]
is  closed in  $C([0,t];\rH)$, and,  by Theorem \ref{brc.teo.4.2}, the family $\{u^z_{\eps}\}_{\eps >0}$ satisfies
the  large deviation principle in $C([0,t];\rH)$ uniformly with respect to $z \in B_{\rH}(0,\lambda)$, we infer that  there exists $\eps_2(t)>0$
such that
\delc{As the family $\{u^y_\eps\}_{\eps>0}$ satisfies the  large deviation principle in $C([0,t];\rH)$,
uniformly with respect to $|y|_{\rH}\leq \lambda$,  see Theorem \ref{brc.teo.4.2},  there exists $\eps(t)>0$ such that}
\[\sup_{z \in B_{\rH}(0,\lambda)}\mathbb{P}\left(\mbox{dist}_{C([0,t];\rH)}\left(u^z_\eps,K_s(y,t)\right)>\frac \delta 2\right)\leq \ex{-\frac{s-\gamma/2}{\eps}},\ \ \ \ \eps\leq \eps(t).\]
Therefore, if we define
\[\eps_3:=\min\{\eps_2(t-1),\ldots,\eps_2(t-\bar{n}), \eps_s,\eps_1\},\]
due to \delc{\eqref{cb40}} \eqref{eqn-zb-8.1}, \eqref{ineq-zb-8.2}, \eqref{ineq-zb-8.4}  and \eqref{cb41}, we deduce that  for $\eps\leq \eps_3$,
\[\begin{array}{l}
\ds{\nu_\eps\left( x \in\,\rH\ :\ \text{ dist}(x, K(s))\geq \delta\right)}\\
\vs\\
\ds{\leq
\ex{-\frac s\eps}+(1+\bar{n})\ex{-\frac{s-\gamma/2}\eps}=\ex{-\frac{s}\eps}\bigl( 1+(1+\bar{n})\ex{-\frac{\gamma/2}\eps} \bigr).}
\end{array}\]
This clearly implies \eqref{cb42}, if we take $\eps_0$ sufficiently small.

\end{proof}

\section{Proof of Lemmata \ref{Lem-CR-Lem7.1} and \ref{lem-73}}\label{sec-upper2}

\begin{proof}[Proof of Lemma \ref{Lem-CR-Lem7.1}.] Suppose  that there exist
 $\delta>0$ and $ s>0$   such that for every $n \in \mathbb{N}$ there exists a function
$z_n\in C([-n,0];H)$
with
\begin{equation}
\label{cb461}
S_{-n}(z_n)\leq s,\ \ \ \ \text{dist}_{\rH}(z_n(0),K_s)\geq  \delta,\end{equation}
and
\begin{equation}
\label{cf460}
\beta_n:=\vert z_n(-n)\vert_H^2 \todown 0,\ \ \ \text{as}\ n\to\infty.\end{equation}
We will show that this leads to a contradiction.

Note that for every $n\in \mathbb{N}$, the function $z_n$ satisfies the following a priori inequality
\begin{equation}
\label{bc2}
\sup_{s \in [-n,0]} \vert z_n(s)\vert_H^2 +\int_{-n}^0 \vert z_n(s) \vert_V^2\,ds  \leq \vert z_n(-n)\vert_H^2 + \int_{-n}^0 \vert f_n(s)\vert^2_{V^\prime}\,ds,
\end{equation}
where
\[f_n(s):=  {\mathcal H}(z_n)(s)=z_n^\prime(s)+\rA z_n(s) +\rB(z_n(s),z_n(s)),\;\; s\in (-n,0). \]

Therefore, in view of inequality \eqref{ineq-useful}, by  conditions \eqref{cb461} and \eqref{cf460}, we infer that there exists $c>0$ such that
\[
\sup_{s \in [-n,0]} \vert z_n(s)\vert_H^2 +\int_{-n}^0 \vert z_n(s) \vert_V^2\,ds  \leq \beta_n+c\,s\delr{\frac{2s}{\lambda_1^{1+\beta}}}.
\]
Moreover, by inequality \eqref{brc82}, there exists a constant $c>0$ such that
\begin{equation}
\label{bc1}
\begin{array}{l}
\ds{|\sqrt{\cdot+n}\,z_n(\cdot)|^2_{L^\infty(-n,0;\rV)}+|\sqrt{\cdot+n}\,z_n(\cdot)|^2_{L^2(-n,0;D(\rA))}}\\
{\;}\\
\ds{\leq c\exp\,\left[c\big( \vert z_n(-n)\vert_{\rH}^4+ \vert f_n\vert^4_{L^2(-n,0;\rV^\prime)} \big)\right]}\\
\vs\\
\ds{\times \Big( \vert z_n(-n)\vert_{\rH}^2+ \vert f_n\vert^2_{L^2(-n,0;\rV^\prime)}+n\vert f_n\vert^2_{L^2(-n,0;\rH)} \Big).}
\end{array}
\end{equation}

If $s\in [-\frac{n}2,0]$, we have $s+n \geq \frac{n}2$ and therefore, from \eqref{bc1}, we get
\begin{equation*}
\begin{array}{l}
\ds{\frac{n}2\Big( |z_n|^2_{L^\infty(-\frac{n}2,0;\rV)}+|\,z_n|^2_{L^2(-\frac{n}2,0;D(\rA))}\Big)}\\
{\;}\\
\ds{\leq c\exp\,\left[c\big( \vert z_n(-n)\vert_{\rH}^4+ \vert f_n\vert^4_{L^2(-n,0;\rV^\prime)} \big)\right]}\\
\vs\\
\ds{\times \Big( \vert z_n(-n)\vert_{\rH}^2+ \vert f_n\vert^2_{L^2(-n,0;\rV^\prime)}+n\vert f_n\vert^2_{L^2(-n,0;\rH)} \Big).}
\end{array}
\end{equation*}
This implies that there exists a constant $c_2=c_2(s,\vert z_1(-1)\vert_H^2)$ such that
\begin{equation}
\label{brc82bis}
 |z_n|_{L^\infty(-\frac{n}2,0;\rV)}+|\,z_n|_{L^2(-\frac{n}2,0;D(\rA))} \leq
c_2, \;\; n\in \mathbb{N}.
\end{equation}

Moreover, this implies that there exists a  constant $c_3>0$ such that
\begin{equation}
\label{cb83}
\vert z_n^\prime(\cdot)\vert_{L^2(-\frac n2,0;\rH)}\leq c_3,
\end{equation}
Indeed, since for $\;\; n\in \mathbb{N}$,
\[z_n^\prime(t)=f_n(t)-\rA z_n(t)-B(z_n(t),z_n(t)),\ \ \ \ t \in\,(-\frac n2,0),\]
we infer  that
\begin{equation}
\label{cb6}
\begin{array}{l}
\ds{|z_n^\prime|_{L^2(-\frac n2,0;\rH)}}\\
{\;}\\
\ds{\leq |f_n|_{L^2(-\frac n2,0;\rH)}+|\rA z_n|_{L^2(-\frac n2,0;\rH)}+|B(z_n,z_n)|_{L^2(-\frac n2,0;\rH)}}\\
{\;}\\
\ds{\leq c\,f_n\vert^2_{L^2(-\frac n2,0;\rH)} +c\,|z_n|^2_{L^2(-\frac n2,0;D(\rA))}+|B(z_n,z_n)|_{L^2(-\frac n2,0;\rH)}+1.}
\end{array}\end{equation}
Next, from inequality  \eqref{ineq-B01},  we  deduce that
\[\ds{\vert\rB(z_n,z_n)\vert_{L^2(-\frac n2,0;\rH)}^2\leq \vert z_n\vert_{L^\infty(-\frac n2,0;\rH)}\vert z_n\vert_{L^\infty(-\frac n2,0;\rV)}\vert z_n\vert _{L^2(-\frac n2,0;\rV)}\vert z_n\vert_{L^2(a,b;D(\rA))}.}\]
Thanks to \eqref{bc2} and \eqref{brc82bis}, this implies that for some constant $c>0$
\[\begin{array}{l}
\ds{\vert \rB(z_n,z_n)\vert_{L^2(-\frac n2,0;\rH)}\leq c,\;\;\; \;\; n\in \mathbb{N}.}
\end{array}\]
Hence inequality \eqref{cb83} follows, due to \eqref{cb6} and \eqref{brc82bis}.

Now, let us fix $k\in\mathbb{N}$. Notice that if $n\geq 2k$ then $[-k,0]\subset [-\frac{n}2,0]$.  We can consider the sequence $\{z_n\}_{n=2k}^\infty$, or more precisely, the sequence of restrictions of that sequence to the time interval $[-k,0]$. According to  \eqref{bc2}, \eqref{brc82bis} and \eqref{cb83}, this sequence satisfies
\[\sup_{r \in [-k,0]}\vert z_n(r)\vert_H^2 +\int_{-k}^0 \vert z_n(r) \vert_V^2\,dr  \leq \beta_n+c\,s\delr{\frac{2c}{\lambda_1^{1+\beta}}}\]
and
\[ \vert z_n\vert^2_{L^\infty(-k,0;\rV)}+|\,z_n|^2_{L^2(-k,0;D(\rA))} +\vert z_n^\prime\vert_{L^2(-k,0;\rH)}
 \leq c.\]

 Moreover, as $S_{-n}(z_n)\leq s$, for any $n \in\,\mathbb{N}$ we have
 \[|f_n|_{L^2(-k,0;D(\rA^{\frac{\alpha}{2}}))}\leq \sqrt{2s}.\]
Hence, by a standard compactness argument, (compare with the first method of proof of Theorem 6.1 in \cite[page 71 onwards]{Lions_1969} and the proof of Theorem III.3.10 from \cite{Temam_2001}),  for each fixed $k \in\,\mathbb{N}$ there exist two subsequences
\[ \{z^{k}_{n_j}\}_{j=1}^\infty\subseteq \{z_n\}_{n=2k}^\infty \mbox{ and } \{f^{k}_{n_j}\}_{j=1}^\infty\subseteq \{f_n\}_{n=2k}^\infty,\]
 and  two functions  $f^k \in L^2(-k,0,\rH)$ and $u^k \in  C([-k,0];\rV) \cap L^2(-k,0;D(\rA))$, with
$ D_t u^k \in L^2(-k,0;\rH) $, such that, as $j\to\infty$,
\[
z^k_{n_j} \to u^k, \mbox{ weakly in } L^2(-k,0;D(\rA))\ \mbox{and strongly in } L^2(-k,0;\rV)\cap C([-k,0];\rH),\]
and
\[f^k_{n_j} \to f^k, \mbox{ weakly in } L^2(-k,0;D(\rA^{\frac{\alpha}{2}})),\]
 $u^k$ satisfies
\[D_t{u^k}+\rA u^k +\rB(u^k,u^k)=f_k \mbox{ on } (-k,0),
\]
and
\[
\vert u^k\vert_{C([-k,0];\rH)}^2 \leq c\,s\delr{\frac{2s}{\lambda_1^{1+\beta}}},\ \ \ \ \ \ \vert f^k\vert^2_{L^2(-k,0;D(\rA^\frac{\alpha}{2}))} \leq 2 s.
\]
Moreover, by an inductive argument, the sequences  $\{z^{k}_{n_j}\}_{j=1}^\infty$ and $\{f^{k}_{n_j}\}_{j=1}^\infty$ can be chosen in such a way  that, for any $k \in\,\mathbb{N}$, the restrictions of $u^{k+1}$ and $f^{k+1}$ to $(-k,0)$ are equal to $u^k$ and  $f^k$, respectively.

This allows us  to define two functions $u$ and $f$ on the interval $(-\infty,0)$ such that for every $k$, the restrictions of $u$ and $f$ to $(-k,0)$ are equal to $u^k$ and $f^k$, respectively. These functions $u$ and $f$ satisfy, for every $k \in\,\mathbb{N}$,
\[D_t{u}+\rA u +\rB(u,u)=f \mbox{ on } (-k,0),
\]
and
\[
\vert u\vert_{C([-k,0];\rH)}^2 \leq c\,s\delr{\frac{2s}{\lambda_1}},\ \ \ \ \  \vert f\vert^2_{L^2(-k,0;D(\rA^{\frac{\alpha}{2}}))} \leq 2 s.
\]
The last of these properties implies that  $f\in L^2(-\infty,0;D(\rA^{\frac\beta 2}))$ and
\[
\vert f\vert^2_{L^2(-\infty,0;D(\rA^{\frac\beta 2}))} \leq 2s.
\]
Moreover
\[D_t{u}+\rA u +\rB(u,u)=f \mbox{ on } (-\infty,0),
\]
so that
 $S_{-\infty}(u)\leq s$. Finally, due to \eqref{cf460},  there exists a sequence $\{t_n\}\downarrow -\infty$ such that
 \[\lim_{n\to\infty}|u(t_n)|_{\rH}=0.\]
 Therefore, by Lemma \ref{lemma4.2} we infer that $u \in\,\mathcal{X}$. Thus, thanks to the characterization of $\rU$ given in part (2) of Theorem \ref{teo4.3}, we can conclude that $\rU(u(0))\leq s$,  so that $u(0)\in K_s$.

On the other hand,
\[\lim_{j\to\infty}z^1_{n_j}=u,\ \ \ \ \ \mbox{in}\ C([-1,0];\rH),\] and, by our assumptions, $\text{dist}_{\rH}(z_n(0),K_s)\geq \delta$. Hence
 \[\text{dist}_{\rH}(u(0),K_s)\geq \delta\]
which contradicts the  fact that $u(0) \in\,K_s$.
 \end{proof}

\begin{proof}[Proof of Lemma \ref{lem-73}.]
Let us assume that there exist $s>0$, $\delta>0$ and $r>0$ such that for every $n \in\,\mathbb{N}$ there exists  $u_n \in\,H_{r, s,\delta}(n)$ such that
\[S_n(u_n)\leq s+1.\]
In particular, due to \eqref{cb3}, we have
\begin{equation}
\label{cb22}
|u_n|_{C([0,n];\rH)}\leq c_{\sqrt{2(s+1)}}\left(1+r\right)=:c_{s,r},\ \ \ n \in\,\mathbb{N}.\end{equation}
Now, for any $k \in\,\mathbb{N}$, we define
\[\gamma_k:=\inf\left\{S_k(u)\,;\,u \in\,C([0,k];\rH),\ |u(0)|_{\rH}\leq c_{s,r}\wedge r,\ |u(k)|_{\rH}\geq \lambda\right\}.\]
If we show that there exists $\bar{k} \in\,\mathbb{N}$ such that $\gamma_{\bar{k}}>0$, then, due to  \eqref{cb22}, we have
\[S_{n\bar{k}}(u_{n\bar{k}}) \geq n\,\gamma_{\bar{k}},\ \ \ \ n \in\,\mathbb{N},\]
which contradicts the fact that $S_{n\bar{k}}(u_{n\bar{k}})\leq s+1$.
Therefore, in order to conclude our proof, we show that  there exists some $\bar{k} \in\,\mathbb{N}$ such that $\gamma_{\bar{k}}>0$.

For any $x \in\,\rH$, we denote by $z_x(t)$ the solution of the problem
\[z_x^\prime(t)+\rA z_x(t)+B(z_x(t),z_x(t))=0,\ \ \ \ z_x(1)=x.\]
According to \eqref{cb24},  there exists some integer $\bar{k}\geq 1$ such that
\begin{equation}
\label{cb27}
|x|_{\rH}\leq c_{s,r}\Longrightarrow |z_x(t)|_{\rH}\leq \frac \lambda 2,\ \ \ \ t\geq \bar{k}.\end{equation}
We show that, for such $\bar{k}$, it holds $\gamma_{\bar{k}}>0$. Actually, if $\gamma_{\bar{k}}=0$, then there exists a sequence
\[\{v_n\}_{n \in\,\mathbb{N}}\subset \left\{ u \in\,C([0,\bar{k}];\rH)\,;\,|u(0)|_{\rH}\leq c_{s,r}\wedge r,\ |u(k)|_{\rH}\geq \lambda\right\},\]
such that
\begin{equation}
\label{cb33}
\lim_{n\to\infty} S_{\bar{k}}(v_n)=0.
\end{equation}
Thus, there exists $\bar{n} \in\,\mathbb{N}$ such that
$ S_{\bar{k}}(v_n)\leq s+1$, for any $n\geq \bar{n}$ and hence, according to \eqref{cb22}, $|v_n(1)|_{\rH}\leq c_{s,r}$, for any $n\geq \bar{n}$.
Moreover, thanks to \eqref{brc82}, there exists a constant $\tilde{c}_{s,r,\bar{k}}$ such that \begin{equation}
\label{cb20}
|v_n|_{L^\infty(1,\bar{k};\rV)}\leq \tilde{c}_{s,r,\bar{k}},\ \ \ \ n\geq \bar{n}.
\end{equation}
This means, in particular,  that there exists a subsequence $\{v_{n_j}\}_{j \in\,\mathbb{N}}\subset \{v_n\}_{n \in\,\mathbb{N}}$ and $\bar{x} \in\,\rH$ such that
\begin{equation}
\label{cb34}
\lim_{j\to\infty }|v_{n_j}(1)-\bar{x}|_{\rH}=0.\end{equation}
Since $|v_n(1)|_{\rH}\leq c_{s,r}$, it follows that $|\bar{x}|_{\rH}\leq c_{s,r}$, and then, due to \eqref{cb27}, it follows that
\begin{equation}
\label{cb30}
|z_{\bar{x}}(\bar{k})|_{\rH}\leq \frac \lambda 2.
\end{equation}
Now, as a consequence of \eqref{cb33}, for every $n \in\,\mathbb{N}$ there exists  $f_n \in\,L^2(0,\bar{k};\rH)$ such that
\[v_n^\prime(t)+\rA v_n(t)+\rB(v_n(t),v_n(t))=f_n(t),\]
and \[\lim_{n\to\infty}|f_n|_{L^2(0,\bar{k};\rH)}=0.\]
According to \eqref{cb34}, this implies that
\[\lim_{j\to\infty}|v_{n_j}-z_{\bar{x}}|_{C([1,\bar{k}];\rH)}=0,\]
so that $|z_{\bar{x}}(\bar{k})|_{\rH}\geq \lambda$, which contradicts \eqref{cb30}.

\end{proof}

\bigskip


\appendix

\section{Exponential estimates}\label{app-exponential estimates}

Let  us first present a small generalization of the exponential estimates from \cite{Brz+Peszat_2000}.
We adopt here the same notations and assumptions from that paper, i.e.  we assume  that $X$ is a real, separable Banach space satisfying the
following condition.
\begin{trivlist}
\item[$(\mathbf{H})$] There exist  $p\ge 2$ such that
the  function \[\varphi :X\ni x \mapsto\vert  x\vert_X ^p\in \mathbb{R}\] is of
$C^2$ class and there are $k_1,k_2>0$ such that
\[\vert \varphi^\prime(x)\vert_{X^\star} \le k_1\vert  x\vert _X^{p-1} \mbox{ and }
\vert \varphi^{\prime\prime}(x)\vert_{L(X\times X)}\le k_2\vert  x\vert_X ^{p-2} \mbox{  for every } x\in X.\]
\end{trivlist}
Note that $L^q$-spaces, with $q\ge 2$,  satisfy the condition $(\mathbf{H})$, and the "good" values of $p$ are all $p \geq q$.

The next assumption is strengthened, compared to \cite{Brz+Peszat_2000}.

\begin{trivlist}
\item[$(\mathbf{A})$]
$\{S(t)\}_{t\ge 0}$
is a $C_0$-semigroup  on $X$ such that for a certain $\mu >0$,
\[
\Vert S(t)\Vert_{L(X)} \leq e^{-\mu t} , \;\;\; t\geq 0.
\]
\end{trivlist}
The generator of the semigroup will be denoted by $-A$.

Note that in \cite{Brz+Peszat_2000} the generator of the semigroup in question was denoted by $A$.
Note also that if the semigroup $\{S(t)\}_{t\ge 0}$ satisfies the above assumption $(\mathbf{A})$, then the semigroup $\{e^{\mu t}S(t)\}_{t\ge 0}$ (whose generator is $\mu I-A$) is a contraction semigroup and hence satisfies the assumptions from \cite{Brz+Peszat_2000}.

\begin{trivlist}
\item[$(\mathbf{P})$]
A system $\mathfrak{U}=(\Omega, \mathcal{ F},\mathbb{F},
\mathbb{P})$, where $\mathbb{F}=(\mathcal{ F}_t)_{t\ge 0}$, is  a filtered
probability space. \\
A family $W(t)$, $t\ge 0$, of bounded linear
operators from a real separable Hilbert
space  $\rK$ into $L^2(\Omega ,\mathcal{ F},\mathbb{P})$, defines
a cylindrical Wiener process on $\rK$, i.e.
\begin{trivlist}
\item[(i)] for all $t\ge 0$ and $h_1,h_2\in \rK$, it holds
$\mathbb{E}\,[W(t)h_1 W(t)h_2] = t\, \langle h_1 ,h_2 \rangle$,
\item[(ii)]
for any $h\in \rK$, $(W(t)h)_{t\ge 0}$ is a real-valued
$(\mathcal{ F}_t)$-adapted  Wiener process.
\end{trivlist}
\end{trivlist}
Before we  give the last  assumptions, let us recall that  we denote  by $M(\rK,X)$ the Banach space of all
$\gamma$-radonifying linear mappings $L$ acting from $\rK$ into $X$, see
\cite{Brz_1995} or \cite{Brz_1997-sc} for more details. The norm in the space $M(\rK,X)$
will be denoted by $\Vert \cdot \Vert$.

\begin{trivlist}
\item[$(\mathbf{S})$]
The process  $\xi=\big(\xi(t), t \ge 0\big)$ is an $M(\rK,X)$-valued $\mathbb{F}$-progressively measurable
process such that
\[
\Vert \xi\Vert_{\infty}:= \esssup_{t \geq 0}
\Vert \xi(t)\Vert<\infty.\]
\end{trivlist}

Now, let us define
\begin{equation}
\label{E11}
u(t):=\int^t_0 S(t-r)\xi (r) \d W(r),\qquad t\ge 0.
\end{equation}
This means that $u$ is a mild solution to

\begin{equation}\label{E121}
\d u(t)+Au(t)\d t=\xi (t)\d W(t), \qquad u(0)=0.
\end{equation}

If we denote
\[
y(t)=\ex{\mu t} u(t), \;\; t \geq 0,
\]
by an easy application of an  It\^o formula, we have
\begin{eqnarray*}
dy(t)&=&\mu \ex{\mu t} u(t) \, dt+ \ex{\mu t} \, d u(t)
\\
&=&\mu  y(t) \, dt+ \ex{\mu t} \, \big[ -Au(t)\d t+\xi (t)\d W(t)  \big] \\
&=& \mu  y(t) \, dt -A [ \ex{\mu t} \,  u(t) ] \d t+\ex{\mu t} \xi (t)\d W(t),
\end{eqnarray*}
so that
\[
dy(t)+ (A(t)-\mu I) y(t) \d t=  \ex{\mu t} \xi(t)\d W(t),
\]
Next, if we introduce an auxiliary process $\eta$ defined by
\[
\eta (t) = \ex{\mu t} \xi(t), \;\; t\ge 0,
\]
we have
\begin{equation}
\label{E11'}
y(t):=\int^t_0 e^{\mu (t-s)}S(t-r)\eta (r) \d W(r),\qquad t\ge 0.
\end{equation}

Since the semigroup $\{e^{\mu t}S(t)\}_{t\ge 0}$ (whose generator is $\mu I-A$) is a contraction semigroup,   by Theorem 1.2 
from  \cite{Brz+Peszat_2000} we infer that
there exists a universal constant $c>0$ such that for all $T>0$ and $R>0$
\begin{equation}\label{E114}
\mathbb{P}\Bigl(\, \sup_{0\le t\le T}\vert  y(t)\vert_X \ge R\Bigr)
\le 3 \exp\Bigl\{-\frac {R^2}{4cM_T}\Bigr\}
\end{equation}
where
\[
M_T:= \esssup_{\Omega} \int^T_0\Vert \eta (t)\Vert ^2\d t.
\]

By assumption (S) we have
\[
\int^T_0\Vert \eta (t)\Vert ^2\d t=\int^T_0 \ex{2 \mu t}\Vert  \xi(t) \Vert ^2 \d t   \leq  \frac{\Vert  \xi \Vert_{\infty} ^2}{2\mu T} \big (\ex{2 \mu T} -1\big)
\leq  \frac{\Vert  \xi \Vert_{\infty} ^2}{2\mu T}  \ex{2 \mu T}.
\]
Moreover, for every $r>0$, we have
\[
\mathbb{P}\Bigl(\, \vert  u(T)\vert_X \ge r\Bigr)=
\mathbb{P}\Bigl(\, \vert  y(T)\vert_X \ge \ex{\mu T} r\Bigr)
\leq
\mathbb{P}\Bigl(\, \sup_{0\le t\le T}\vert  y(t)\vert_X \ge  \ex{\mu T} r\Bigr)
\]
Thus, by applying inequality \eqref{E114} with
\[
M_T=\frac{\|\xi\|_\infty^2}{2\mu T}  \ex{2 \mu T}  \mbox{ and } R= \ex{\mu T} r
\]
we infer that
\begin{eqnarray*}
\mathbb{P}\Bigl(\, \vert  u(T)\vert_X \ge r\Bigr)
&\leq& 3 \exp\Bigl\{-\frac {R^2}{4c M_T}\Bigr\}=3  \exp\Bigl\{-\frac {2 \mu T  \ex{2 \mu T}  r^2}{4c \Vert  \xi \Vert_{\infty} ^2 \ex{2 \mu T}}\Bigr\}
\\
&=& 3  \exp\Bigl\{-\frac { \mu T    r^2}{2C \Vert  \xi \Vert_{\infty} ^2 }\Bigr\}.
\end{eqnarray*}
Hence we have proved the following result.

\begin{theorem}\label{thm-exp-1}
There exists a  constant $c>0$ such that  for every process $\xi$ satisfying assumption $(\mathbf{S})$,
the following inequality holds
\begin{equation}\label{E122}
\ds{
\mathbb{P}\Bigl(\, \vert  u(t)\vert_X \ge r\Bigr)
\leq  3  \ex{-\frac { \mu t    r^2}{2c \Vert  \xi \Vert_{\infty} ^2 }}, \;\;r>0, \;\; t\geq 0.}
\end{equation}
\end{theorem}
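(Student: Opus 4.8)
The plan is to reduce the estimate to the contraction-semigroup setting of \cite{Brz+Peszat_2000}, exploiting the exponential stability in assumption $(\mathbf{A})$. Although $\{S(t)\}_{t\ge 0}$ need not be a contraction semigroup, the rescaled family $\{\ex{\mu t}S(t)\}_{t\ge 0}$ is one, since $\Vert \ex{\mu t}S(t)\Vert_{L(X)}\le 1$. Accordingly, I would first introduce the rescaled process $y(t):=\ex{\mu t}u(t)$ and, by applying the It\^o formula to $t\mapsto \ex{\mu t}u(t)$, verify that $y$ is the mild solution associated with the generator $\mu I-A$ of that contraction semigroup, namely $y(t)=\int_0^t \ex{\mu(t-r)}S(t-r)\eta(r)\,\d W(r)$ with $\eta(t):=\ex{\mu t}\xi(t)$.

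Next I would apply Theorem~1.2 of \cite{Brz+Peszat_2000} to the process $y$, which produces a universal constant $c>0$ (in particular independent of $\xi$, $T$ and $r$) such that
\[
\mathbb{P}\Bigl(\,\sup_{0\le t\le T}\vert y(t)\vert_X\ge R\Bigr)\le 3\exp\Bigl\{-\frac{R^2}{4cM_T}\Bigr\},\qquad M_T:=\esssup_\Omega\int_0^T\Vert\eta(t)\Vert^2\,\d t.
\]
The quantity $M_T$ is then controlled using assumption $(\mathbf{S})$: since $\Vert\eta(t)\Vert^2=\ex{2\mu t}\Vert\xi(t)\Vert^2\le \ex{2\mu t}\Vert\xi\Vert_\infty^2$, integrating in time shows that $M_T$ is bounded by a constant multiple of $\Vert\xi\Vert_\infty^2\,\ex{2\mu T}$, so that $M_T$ carries exactly the exponential factor $\ex{2\mu T}$.

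Finally I would transfer the bound from $y$ back to $u$. Since $\vert u(T)\vert_X\ge r$ is equivalent to $\vert y(T)\vert_X\ge \ex{\mu T}r$, and this event is contained in $\{\sup_{0\le t\le T}\vert y(t)\vert_X\ge \ex{\mu T}r\}$, I would insert $R=\ex{\mu T}r$ into the displayed inequality. The point of the rescaling is that $R^2=\ex{2\mu T}r^2$ cancels the factor $\ex{2\mu T}$ hidden in $M_T$, leaving precisely the exponent appearing in \eqref{E122}. I do not expect any genuine obstacle: the real work is done by the borrowed Theorem~1.2, and the only care required is the bookkeeping of the exponential factors so that they cancel, together with checking that the constant $c$ is universal (independent of $\xi$, $T$ and $r$), which it is.
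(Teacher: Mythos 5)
Your strategy is exactly the one the paper uses: rescale by $\ex{\mu t}$ to reduce to the contraction semigroup $\{\ex{\mu t}S(t)\}_{t\ge 0}$, apply Theorem~1.2 of \cite{Brz+Peszat_2000} to the rescaled convolution $y$, bound $M_T$ via assumption $(\mathbf{S})$, and undo the rescaling with $R=\ex{\mu T}r$. However, the final bookkeeping does not produce the exponent in \eqref{E122}, and this is a genuine gap rather than a detail. With your (correct) estimate $M_T\le C\,\Vert\xi\Vert_\infty^2\,\ex{2\mu T}$ --- indeed $\int_0^T\ex{2\mu t}\,dt=\frac{\ex{2\mu T}-1}{2\mu}\le \frac{1}{2\mu}\ex{2\mu T}$ --- substituting $R=\ex{\mu T}r$ gives
\[
\frac{R^2}{4cM_T}\;\ge\;\frac{\ex{2\mu T}\,r^2}{4cC\,\Vert\xi\Vert_\infty^2\,\ex{2\mu T}}\;=\;\frac{r^2}{4cC\,\Vert\xi\Vert_\infty^2},
\]
which is independent of $T$. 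You therefore obtain a uniform-in-time bound of the form $\mathbb{P}(\vert u(T)\vert_X\ge r)\le 3\,\ex{-r^2/(2c'\Vert\xi\Vert_\infty^2)}$, not the bound \eqref{E122}, whose exponent grows linearly in $t$. To recover the factor $\mu T$ you would need $M_T\le \frac{C}{\mu T}\Vert\xi\Vert_\infty^2\ex{2\mu T}$, i.e. an extra $1/T$ in the bound on $\int_0^T\ex{2\mu t}\,dt$; this is in fact what the paper writes at the corresponding step, but it is not a valid upper bound once $T>1$, which is precisely the regime of interest.

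Moreover, no rearrangement of the same ingredients can rescue the exponent as stated: already for $X=\mathbb{R}$, $A=\mu$, $\xi\equiv 1$, the process $u$ is Ornstein--Uhlenbeck with variance increasing to $\frac{1}{2\mu}>0$, so $\mathbb{P}(\vert u(t)\vert\ge r)$ tends to a positive constant as $t\to\infty$ while the right-hand side of \eqref{E122} tends to $0$. So your argument is sound up to, but not including, the claim that the exponential factors cancel ``leaving precisely the exponent appearing in \eqref{E122}''; what it actually proves is the weaker, $t$-uniform Gaussian tail estimate, and that is also all that can be true.
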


The above theorem is also true for the stochastic Navier-Stokes equations with bounded multiplicative noise
 \begin{equation}
\label{eqn-SNSE}
du(t)+\rA u(t)+\rB(u(t),u(t))=g(u)\,dW(t),\ \ \ \ u(0)=u_0.
\end{equation}
Indeed,  the argument leading to Theorem 1.1 in \cite{Brz+Peszat_2000} remains unchanged in almost all steps. Here the Banach space $X$ is simply the Hilbert space $\rH$ and we assume that
\[
g:\rH \to M(\rK,\rH)
\]
is a continuous and bounded map.

Indeed,  these equations can be written in the form \eqref{E121} with time dependent operator $A(t)$ being dissipative for each $t$, i.e.

\begin{equation}\label{E127}
\d u+A(t)u(t)\d t=\xi (t)\d W(t), \qquad u(0)=0.
\end{equation}
with
\begin{eqnarray*}
A(t)v&=&Av+B(u(t),v)=Av+ \pi \big( u(t)\cdot \nabla v\big),\;\; t\geq 0\\
\xi(t)&=& g(u(t)) ,\;\; t\geq 0.
\end{eqnarray*}
Moreover, our proof is de facto simpler, as the solution $u$ is now a strong solution and one doesn't need to perform the approximation alluded to in \cite{Brz+Peszat_2000} (which could be done as in \cite{Brz+M+S_2005}).
Let $\mu>0$ be such that $-A+\mu I$ is dissipative on $\rH$. Denote, as before
\[
y(t)=
\ex{\mu t} u(t), \;\; t \geq 0.
\]

By an easy application of an  It\^o formula, we have
\begin{eqnarray*}
dy(t)&=&\mu \ex{\mu t} u(t) \, dt+ \ex{\mu t} \, d u(t)
\\
&=&\mu  y(t) \, dt+ \ex{\mu t} \, \big[ -A(t)u(t)\d t+\xi (t)\d W(t)  \big] \\
&=& \mu  y(t) \, dt -A(t) [ \ex{\mu t} \,  u(t) ] \d t+\ex{\mu t} \xi (t)\d W(t).
\end{eqnarray*}
Hence,
\[
dy(t)+ (A-\mu I) y(t) \d t= \eta(t)\d W(t).
\]
where
\[
\eta(t)=\ex{\mu t} \xi (t), \;\; t\geq 0.
\]

The crucial part of the argument   in this new case is the following one. If
\[
f_{\lambda}:X\ni x\mapsto\left(1+\lambda\vert
x\vert ^p\right)^{1/p}\in \mathbb{R},\]
we apply It\^o's formula to the function
$f_{\lambda}$ and process $y$ and we obtain
\begin{eqnarray}\label{E132}
&&f_{\lambda}\left(y(t)\right)
=f_{\lambda}\left(y(0)\right)+
\int^t_0f_{\lambda}^\prime\left(y(s)\right)\big((\mu I-A(s))y(s)\big)\d s
\\ \nonumber
&&\qquad +\int^t_0 f_{\lambda}^\prime \left(y(s)\right)
\left(\xi (s)\right)\d W(s)+\frac 12\int^t_0
{\rm tr}\,f_{\lambda}^{\prime\prime}\left(y(s)\right)
\left(\xi (s),\xi (s)\right)\d s.
\end{eqnarray}
Since $-A(s)$ is dissipative, the second term on the RHS of
\eqref{E132} is nonpositive.  This can also be seen from the formula
\[
f_{\lambda}^\prime(x)(y)= \lambda \big( 1 +\lambda \vert x \vert^p\big)^{\frac1p-1} \vert x \vert^{p-1} \langle x,y \rangle
\]
Therefore, using the properties of $B$, i.e. $\langle y(s), B(y(s),y(s)) \rangle=0$,  we get
\begin{eqnarray*}
f_{\lambda}^\prime\left(y\right)(\mu y -A(s)y)&=&f_{\lambda}^\prime\left(y\right)(\mu y -Ay +B(y,y))
\\
&=&
\lambda \big( 1 +\lambda \vert y \vert^p\big)^{\frac1p-1} \vert y \vert^{p-1} \langle y, -Ay +B(y,y) \rangle
\\
&=&
\lambda \big( 1 +\lambda \vert y \vert^p\big)^{\frac1p-1} \vert y \vert^{p-1} \Big[ \langle y, \mu y -Ay  \rangle +\langle y, B(y,y) \rangle \Big]
\\
&=&
\lambda \big( 1 +\lambda \vert y \vert^p\big)^{\frac1p-1} \vert y \vert^{p-1} \langle y, \mu y - Ay  \rangle \leq 0.
\end{eqnarray*}
Since also $f_{\lambda}(0)=1$,  we have
$$
f_{\lambda}\left(y(t)\right)
\le 1+\int^t_
0f_{\lambda}^\prime \left(y(s)\right) \left(\xi (s)\right) \d W(s)
+\frac{1}{2}
\int^t_0 {\rm tr}\, f_{\lambda}^{\prime\prime}\left(y(s)\right
)\left(\xi (s),\xi (s)\right)\d s.
$$
with,  because of Assumption $(\mathbf{H})$, $f_{\lambda}^\prime$ and $f_{\lambda}^{\prime\prime}$ satisfying
\begin{equation}\label{E31}
\vert f_{\lambda}^\prime(x)\vert
\le C_1\lambda^{1/p}
\qquad {\rm and}\qquad
\vert f_{\lambda}^{\prime\prime}(x)\vert \le C_2
\lambda^{2/p}.
\end{equation}
Arguing as in the proof of Theorem 1.2 in  \cite{Brz+Peszat_2000} we infer that
there exists an universal constant $c>0$ such that for all $T>0$ and $R>0$
\[\mathbb{P}\Bigl(\, \sup_{0\le t\le T}\vert  y(t)\vert_X \ge R\Bigr)
\le 3 \exp\Bigl\{-\frac {R^2}{4cM_T}\Bigr\}
\]
where
\[
M_T:= \esssup_{\Omega} \int^T_0\Vert \eta (t)\Vert ^2\d t.\]
Then, by repeating the proof of Theorem \ref{thm-exp-1}, we arrive at
 \begin{theorem}\label{thm-exp-NSEs}
There exists a  generic constant $c>0$ such that  the solution $u$ of the stochastic NSEs  \eqref{eqn-SNSE} the following inequality holds
\begin{equation}\label{eqn-exp-NSEs}
\mathbb{P}\Bigl(\, \vert  u(t)\vert_X \ge r\Bigr)
\leq  3  \ex{-\frac { \mu t    r^2}{2c \Vert  \xi \Vert_{\infty} ^2 }},  \;\;\; r>0,  t\geq 0.
\end{equation}
\end{theorem}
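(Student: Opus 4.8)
The plan is to reduce equation \eqref{eqn-SNSE} to the abstract form \eqref{E121}, now with a time-dependent but still dissipative generator, and then to reproduce essentially verbatim the martingale tail estimate underlying Theorem \ref{thm-exp-1}. Concretely, writing $A(t)v=\rA v+\rB(u(t),v)$ and $\xi(t)=g(u(t))$, the strong solution $u$ of \eqref{eqn-SNSE} solves $du+A(t)u\,dt=\xi(t)\,dW(t)$ as in \eqref{E127}. Since $g$ is bounded, $\Vert\xi\Vert_\infty=\esssup_{t}\Vert g(u(t))\Vert<\infty$, so assumption $(\mathbf S)$ holds for this $\xi$. First I would fix $\mu>0$ with $-\rA+\mu I$ dissipative on $\rH$ (one may take $\mu=\lambda_1$, since $\langle\rA v,v\rangle\geq\lambda_1|v|_{\rH}^2$) and pass to $y(t)=\ex{\mu t}u(t)$, which satisfies $dy+(A(t)-\mu I)y\,dt=\eta(t)\,dW(t)$ with $\eta(t)=\ex{\mu t}\xi(t)$.

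The heart of the argument, carried out next, is the It\^o expansion of $f_\lambda(y)$ with $f_\lambda(x)=(1+\lambda|x|^p)^{1/p}$, exactly as in the identity \eqref{E132}. The only place where the nonlinearity could obstruct the linear proof is the drift term $f_\lambda^\prime(y)\big((\mu I-A(s))y\big)$; I would show it is nonpositive by combining the dissipativity of $-\rA+\mu I$ with the fundamental cancellation $\langle\rB(y,y),y\rangle=0$ from \eqref{eqn:b02}. Because $f_\lambda^\prime(x)(v)=\lambda(1+\lambda|x|^p)^{1/p-1}|x|^{p-1}\langle x,v\rangle$ is a nonnegative multiple of $\langle x,v\rangle$, the $\rB$-contribution drops out identically and what remains is a nonnegative multiple of $\langle y,\mu y-\rA y\rangle\leq 0$. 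This collapses the It\^o identity to $f_\lambda(y(t))\leq 1+\int_0^t f_\lambda^\prime(y)(\xi)\,dW+\tfrac12\int_0^t\mathrm{tr}\,f_\lambda^{\prime\prime}(y)(\xi,\xi)\,ds$, and the bounds $|f_\lambda^\prime|\leq C_1\lambda^{1/p}$, $|f_\lambda^{\prime\prime}|\leq C_2\lambda^{2/p}$ furnished by $(\mathbf H)$ make the right-hand side amenable to the exponential martingale estimates.

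With this in hand I would simply invoke the tail bound established for the contraction-semigroup case in \cite[Theorem 1.2]{Brz+Peszat_2000}: there is a universal $c>0$ with $\mathbb{P}\big(\sup_{0\le t\le T}|y(t)|_X\geq R\big)\leq 3\exp(-R^2/(4cM_T))$, where $M_T=\esssup_\Omega\int_0^T\Vert\eta(t)\Vert^2\,dt\leq\frac{\Vert\xi\Vert_\infty^2}{2\mu T}\ex{2\mu T}$. Finally I would transfer the bound back to $u$ via $\mathbb{P}(|u(T)|_X\geq r)=\mathbb{P}(|y(T)|_X\geq\ex{\mu T}r)\leq\mathbb{P}\big(\sup_{t\le T}|y(t)|_X\geq\ex{\mu T}r\big)$ and apply the above with $R=\ex{\mu T}r$; the $\ex{2\mu T}$ factors cancel and yield \eqref{eqn-exp-NSEs}.

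I expect the genuine obstacle to be purely the drift-sign step: verifying that introducing the state-dependent nonlinear operator $A(t)$ does not destroy the monotonicity that drives the entire estimate. This is resolved by the orthogonality \eqref{eqn:b02}, but one must justify it at the level of the process $y$ rather than merely formally, which requires $y(t)\in\rV$ for a.e.\ $t$ so that $\langle\rB(y,y),y\rangle$ is well defined. Here this is automatic because $u$ is a strong solution, so no approximation procedure (such as the one alluded to in \cite{Brz+Peszat_2000}) is needed; everything else is a faithful repetition of the proof of Theorem \ref{thm-exp-1}.
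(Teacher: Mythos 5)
Your proposal is correct and follows essentially the same route as the paper's own argument: the same reduction to the abstract form \eqref{E127} with the time-dependent dissipative operator $A(t)v=\rA v+\rB(u(t),v)$, the same exponential rescaling $y(t)=\ex{\mu t}u(t)$, the same It\^o expansion of $f_\lambda(y)$ with the drift sign controlled by the cancellation $\langle \rB(y,y),y\rangle=0$, and the same transfer of the tail bound from \cite[Theorem 1.2]{Brz+Peszat_2000} back to $u$ via $R=\ex{\mu T}r$. You also correctly identify, as the paper does, that the strong-solution property removes the need for the approximation procedure of \cite{Brz+Peszat_2000}.
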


\section{Behavior of the solutions of the Navier-Stokes equations for large negative times}\label{section-negative}

In our paper with Mark Freidlin \cite{BCF_2013} we proved the following two results, see Propositions A.1 and A.2, for the general 2-D Navier Stokes Equations. We formulate them in way that does not need to use special notation used by us.

\begin{proposition}\label{prop_BCF_2013-infty}
Assume that $z \in C((-\infty,0];\rH)$ is such that
\begin{equation}
\label{eqn-B01} \lim_{t\to-\infty}\vert z(t)\vert_{\rH}=0
\end{equation}
and $S_{-\infty}(z)<\infty$, i.e.
\begin{equation}
\label{eqn-B02}
\int_{-\infty}^0\vert  z^\prime(t)+\rA z(t)+\rB(z(t),z(t)) \,\vert_{\rH}^2\, dt <\infty.
  \end{equation}
  Then, we have $z(0) \in\,V$,
 \begin{equation}
\label{eqn-A04}
\lim_{t\to-\infty} \vert z(t)\vert_{\rV}=0,
\end{equation}
and
\begin{equation}
\label{eqn-A10}
\int_{-\infty}^0 \vert Az(t)\vert_\rH^2\, dt  + \int_{-\infty}^0 \vert z^\prime(t)\vert_\rH^2\, dt<\infty.
\end{equation}
Moreover, there exists a continuous and strictly increasing function $\varphi: [0,\infty) \to [0,\infty)$ such that
$\varphi(0)=0$ and, if $z$ satisfying condition \eqref{eqn-B01}  is a solution to the problem
\begin{equation}
\label{eqn-B05}
z^\prime(t)+\rA z(t)+\rB(z(t),z(t))=f(t), \ \ \  t\leq 0,
\end{equation}
with  $f$ being an element of $L^2(-\infty,0;\rH)$, then
 \begin{equation}
\label{eqn-B06} \vert z(0) \vert_{\rV}^2+ \int_{-\infty}^0 \vert Az(t)\vert_\rH^2\, dt  + \int_{-\infty}^0 \vert z^\prime(t)\vert_\rH^2\, dt \leq
\varphi( \int_{-\infty}^0 \vert f(t)\vert_\rH^2\, dt).
\end{equation}
\end{proposition}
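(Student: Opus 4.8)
The plan is to set $f:=\cH(z)=z'+\rA z+\rB(z,z)$, which by hypothesis \eqref{eqn-B02} lies in $L^2(-\infty,0;\rH)$, and then run two successive energy estimates on the truncated intervals $(-n,0)$, passing to the limit $n\to\infty$ with the help of the decay \eqref{eqn-B01}. First I would record the basic energy bound: testing \eqref{eqn-B05} with $z$ and using $\langle \rB(z,z),z\rangle=0$ from \eqref{eqn:b02} gives, on each $(-n,0)$, the estimate
\[
\sup_{t\in[-n,0]}|z(t)|_{\rH}^2+\int_{-n}^0|z(t)|_{\rV}^2\,dt\le |z(-n)|_{\rH}^2+\frac1{\lambda_1}\int_{-n}^0|f(t)|_{\rH}^2\,dt,
\]
exactly as in \eqref{cb4}. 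Letting $n\to\infty$ and invoking \eqref{eqn-B01} yields a uniform bound
\[
M_0:=\sup_{(-\infty,0]}|z|_{\rH}^2+\int_{-\infty}^0|z|_{\rV}^2\,dt<\infty,
\]
controlled by $\int_{-\infty}^0|f|_{\rH}^2\,dt$. Next, for any $a<0$ the datum $z(a)\in\rH$ together with $f\in L^2(a,0;\rH)$ lets me apply Temam's weighted regularization \eqref{brc82}; since $\sqrt{\cdot-a}\,z\in L^\infty(a,0;\rV)\cap L^2(a,0;D(\rA))$ with the corresponding control on $z'$, a standard interpolation/trace argument gives $z\in C((a,0];\rV)$, so in particular $z(0)\in\rV$ and $z(t)\in\rV$ for every $t<0$.

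The heart of the argument is the $\rV$-to-$D(\rA)$ estimate. Testing \eqref{eqn-B05} with $\rA z$ gives
\[
\tfrac12\tfrac{d}{dt}|z|_{\rV}^2+|\rA z|_{\rH}^2=\langle f,\rA z\rangle-b(z,z,\rA z).
\]
Here the nonlinear term is handled by the first inequality of \eqref{eqn:4.0a} with $u=v=z$, $w=\rA z$, namely $|b(z,z,\rA z)|\le c\,|z|_{\rH}^{1/2}|z|_{\rV}|\rA z|_{\rH}^{3/2}$, and Young's inequality absorbs the $|\rA z|_{\rH}^{3/2}$ factor into $|\rA z|_{\rH}^2$. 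After also absorbing $\langle f,\rA z\rangle$, this produces the differential inequality
\[
\tfrac{d}{dt}|z|_{\rV}^2+\tfrac12|\rA z|_{\rH}^2\le 2|f|_{\rH}^2+h(t)\,|z|_{\rV}^2,\qquad h(t):=C\,|z(t)|_{\rH}^2|z(t)|_{\rV}^2.
\]
Crucially $h\in L^1(-\infty,0)$, since $\int h\le C\,M_0^2$ by the bound from the first paragraph.

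With this in hand I would run a Gronwall argument on the half-line. Writing $y:=|z|_{\rV}^2$, $g:=2|f|_{\rH}^2$, we have $y'\le g+hy$, so for $s\le t<0$, $y(t)\le e^{\|h\|_{L^1}}\big(y(s)+\|g\|_{L^1}\big)$. Because $\int_{-\infty}^0 y\,dt\le M_0<\infty$ there is a sequence $s_k\to-\infty$ with $y(s_k)\to0$; sending $s=s_k\to-\infty$ gives the uniform bound $\sup_{(-\infty,0]}|z|_{\rV}^2\le e^{CM_0^2}\cdot 2\int_{-\infty}^0|f|_{\rH}^2$, whence $z(0)\in\rV$ quantitatively and, reintegrating the differential inequality, $\int_{-\infty}^0|\rA z|_{\rH}^2\,dt<\infty$ with the same kind of bound. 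Feeding these into $z'=f-\rA z-\rB(z,z)$ and estimating $\rB(z,z)$ by \eqref{ineq-B01} (or Lemma \ref{lem-B}) gives $\int_{-\infty}^0|z'|_{\rH}^2\,dt<\infty$, establishing \eqref{eqn-A10}. Assembling the constants $M_0$, $e^{CM_0^2}$ from these steps, all monotone in $\rho:=\int_{-\infty}^0|f|_{\rH}^2\,dt$ and vanishing at $\rho=0$, yields a continuous increasing $\varphi$ with $\varphi(0)=0$ satisfying \eqref{eqn-B06}.

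For the limit \eqref{eqn-A04} I would show $\tfrac{d}{dt}|z|_{\rV}^2\in L^1(-\infty,0)$: each of the three terms $|\rA z|_{\rH}^2$, $\langle f,\rA z\rangle\le|f|_{\rH}|\rA z|_{\rH}$, and $b(z,z,\rA z)\le c\,|z|_{\rH}^{1/2}|z|_{\rV}|\rA z|_{\rH}^{3/2}$ is integrable (the last by H\"older with the $L^\infty$ bound on $|z|_{\rH}$, $|z|_{\rV}\in L^2$, $|\rA z|_{\rH}\in L^2$). Hence $|z(t)|_{\rV}^2$ has a limit as $t\to-\infty$, and since $\int_{-\infty}^0|z|_{\rV}^2\,dt<\infty$ that limit must be $0$. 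The main obstacle I anticipate is exactly the control of $b(z,z,\rA z)$: it forces reliance on the two-dimensional interpolation inequality \eqref{eqn:4.0a} and on the a priori $L^\infty(\rH)\cap L^2(\rV)$ bound, and the integrability of the resulting coefficient $h$ is what makes the half-line Gronwall argument closeable and the quantitative $\varphi$ (with its exponential dependence inherited from \eqref{brc82}) available.
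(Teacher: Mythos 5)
Your argument is correct and follows essentially the same template the paper uses: the paper itself only cites \cite{BCF_2013} for this proposition, but its Appendix~B proof of the higher-order analogue (Proposition \ref{prop_BCF_2013-inftythree}) runs exactly your scheme --- a priori $L^\infty(\rH)\cap L^2(\rV)$ bound, a differential identity for the next norm with the nonlinearity controlled by the interpolation inequalities \eqref{eqn:4.0a}, a Gronwall estimate closed along a sequence $s_k\to-\infty$ on which the relevant norm vanishes, and then the integrability and decay statements. (On the torus one could even drop the Gronwall factor entirely via the identity \eqref{eqn-A-B}, $\lb \rA z,\rB(z,z)\rb_\rH=0$, but your interpolation route is the one that works for general boundary conditions, as in the cited source.)
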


\begin{proposition} \label{prop_BCF_2013-inftybis}
Assume that $\alpha \in\,(0,1/2)$. If a function  $z \in\,C((-\infty,0];\rH)$, satisfying condition \eqref{eqn-B01}, satisfies also
  \begin{equation}
\label{eqn-B03}
 \int_{-\infty}^0\vert  z^\prime(t)+\rA z(t)+\rB(z(t),z(t)) \,\vert_{D(\rA^{\frac\a 2})}^2\, dt <\infty,
  \end{equation}
we have \begin{equation}
\label{eqn-B04} z(0)\in D(\rA^{\frac{\a}2+\frac12}),
  \end{equation}
\begin{equation}
\label{eqn-A04'}
\lim_{t\to-\infty} \vert z(t)\vert_{D(\rA^{\frac{\a}2+\frac12})}=0,
\end{equation}
and
\begin{equation}
\label{eqn-A11}
\int_{-\infty}^0 \vert A^{\frac{\a}2+1} z(t)\vert_\rH^2\, dt  + \int_{-\infty}^0 \vert  A^{\frac{\a}2}  z^\prime(t)\vert_\rH^2\, dt<\infty.
\end{equation}
Moreover, there exists a continuous and strictly increasing function $ \varphi_\alpha: [0,\infty) \to [0,\infty)$ such that
$ \varphi_\alpha(0)=0$ and if
$z $, satisfying condition \eqref{eqn-B01}, is a solution to  problem \eqref{eqn-B05} with  $f \in L^2(-\infty,0;D(\rA^{\frac\a 2}))$, then

 \begin{eqnarray}
\label{eqn-B10} \vert z(0) \vert_{D(\rA^{\frac{\a}2+\frac12})}^2&+& \int_{-\infty}^0 \vert A^{\frac{\a}2+1} z(t)\vert_\rH^2\, dt
 + \int_{-\infty}^0 \vert  A^{\frac{\a}2}  z^\prime(t)\vert_\rH^2\, dt
  \\
  &\leq& \varphi_\alpha( \vert f\vert^2_{L^2(-\infty,0);D(\rA^{\frac\a 2})}).
\label{eqn-B11}
\end{eqnarray}
\end{proposition}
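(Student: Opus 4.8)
The plan is to run a parabolic bootstrap starting from the base case $\alpha=0$, which is precisely Proposition~\ref{prop_BCF_2013-infty}. Writing $f:=\cH(z)=z^\prime+\rA z+\rB(z,z)$, hypothesis \eqref{eqn-B03} gives $f\in L^2(-\infty,0;D(\rA^{\frac\alpha2}))$, and since $D(\rA^{\frac\alpha2})\embed\rH$ we have $f\in L^2(-\infty,0;\rH)$. Hence Proposition~\ref{prop_BCF_2013-infty} applies and yields $z(0)\in\rV$, $\lim_{t\to-\infty}|z(t)|_{\rV}=0$, $\int_{-\infty}^0(|\rA z|_{\rH}^2+|z^\prime|_{\rH}^2)\,dt<\infty$, and the quantitative bound \eqref{eqn-B06}. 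In particular $z$ is $\rV$-valued continuous with vanishing limit, so $\sup_{t\le0}|z(t)|_{\rV}<\infty$; moreover \eqref{cb4}, applied on $[-n,0]$ and letting $n\to\infty$ (using $|z(-n)|_{\rH}\to0$), gives $\int_{-\infty}^0|z(t)|_{\rV}^2\,dt<\infty$. Combining these two facts yields $\int_{-\infty}^0|z(t)|_{\rV}^p\,dt<\infty$ for every $p\ge2$, a point I will use repeatedly.

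The heart of the proof is an energy estimate at the fractional level, obtained by testing \eqref{eqn-B05} against $\rA^{\alpha+1}z$, which produces, at least formally,
\[
\tfrac12\frac{d}{dt}\,|z(t)|_{D(\rA^{\frac{\alpha+1}2})}^2+|z(t)|_{D(\rA^{1+\frac\alpha2})}^2
=\langle f,\rA^{\alpha+1}z\rangle-\langle \rB(z,z),\rA^{\alpha+1}z\rangle .
\]
The source term is harmless: $\langle f,\rA^{\alpha+1}z\rangle=\langle \rA^{\frac\alpha2}f,\rA^{1+\frac\alpha2}z\rangle\le \tfrac14|z|_{D(\rA^{1+\frac\alpha2})}^2+|f|_{D(\rA^{\frac\alpha2})}^2$. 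Unlike the case $\alpha=0$, where the enstrophy cancellation \eqref{eqn-A-B} kills the nonlinearity, here $\langle\rB(z,z),\rA^{\alpha+1}z\rangle$ does not vanish and must be estimated through the fractional product bound \eqref{ineqn-B-fractional}: choosing $s\in(1,2)$, $|\rB(z,z)|_{D(\rA^{\frac\alpha2})}\le c\,|z|_{D(\rA^{\frac s2})}|z|_{D(\rA^{\frac{\alpha+1}2})}$, so that $\langle\rB(z,z),\rA^{\alpha+1}z\rangle\le c\,|z|_{D(\rA^{\frac s2})}|z|_{D(\rA^{\frac{\alpha+1}2})}\,|z|_{D(\rA^{1+\frac\alpha2})}$. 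Interpolating $|z|_{D(\rA^{\frac s2})}$ and $|z|_{D(\rA^{\frac{\alpha+1}2})}$ between $\rV$ and $D(\rA^{1+\frac\alpha2})$, the right-hand side is controlled by $c\,|z|_{\rV}^{2-\eta-\zeta}|z|_{D(\rA^{1+\frac\alpha2})}^{1+\eta+\zeta}$ with $\zeta=\frac\alpha{1+\alpha}$ and $\eta=\frac{s-1}{1+\alpha}$, whence $\eta+\zeta=\frac{s-1+\alpha}{1+\alpha}<1$ precisely because $s<2$. Young's inequality then absorbs a further $\tfrac14|z|_{D(\rA^{1+\frac\alpha2})}^2$ at the cost of a term $C\,|z|_{\rV}^p$ for some $p\ge2$.

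Combining, I obtain the differential inequality $\frac{d}{dt}|z|_{D(\rA^{\frac{\alpha+1}2})}^2+|z|_{D(\rA^{1+\frac\alpha2})}^2\le g(t)$, where $g:=2|f|_{D(\rA^{\frac\alpha2})}^2+2C|z|_{\rV}^p$ lies in $L^1(-\infty,0)$ by \eqref{eqn-B03} and the integrability of $|z|_{\rV}^p$ above. Since $\int_{-\infty}^0|\rA z|_{\rH}^2\,dt<\infty$ and $\frac{\alpha+1}2\le1$, there is a sequence $t_n\downarrow-\infty$ with $|z(t_n)|_{D(\rA^{\frac{\alpha+1}2})}\le c\,|z(t_n)|_{D(\rA)}\to0$; integrating from $t_n$ to $t$ and letting $n\to\infty$ gives, for every $t\le0$, $|z(t)|_{D(\rA^{\frac{\alpha+1}2})}^2+\int_{-\infty}^t|z(s)|_{D(\rA^{1+\frac\alpha2})}^2\,ds\le\int_{-\infty}^t g(s)\,ds$. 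Taking $t=0$ yields \eqref{eqn-B04} and the first integral in \eqref{eqn-A11}; letting $t\to-\infty$ yields \eqref{eqn-A04'}; and the bound on $\int|\rA^{\frac\alpha2}z^\prime|_{\rH}^2$ follows from $z^\prime=f-\rA z-\rB(z,z)$ by estimating each term in $D(\rA^{\frac\alpha2})$ via \eqref{ineqn-B-fractional}. Finally, tracking all constants — everything is controlled by $|f|^2_{L^2(-\infty,0;D(\rA^{\frac\alpha2}))}$ through the base-case function $\varphi$ of \eqref{eqn-B06} and through $\int|z|_{\rV}^p\le(\sup|z|_{\rV})^{p-2}\int|z|_{\rV}^2$ — produces the continuous increasing $\varphi_\alpha$ of \eqref{eqn-B11}.

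I expect the main obstacle to be twofold. First, the rigorous justification of the formal test against $\rA^{\alpha+1}z$, which I would carry out on finite intervals with the parabolic weight $\sqrt{\,\cdot+n}$ as in \eqref{brc82} (or via Galerkin approximation) and then pass to the limit. Second, the bookkeeping in the interpolation/Young step, where one must check $\eta+\zeta<1$ so that the dissipation genuinely dominates; this whole scheme is forced upon us by the failure of the cancellation \eqref{eqn-A-B} at the fractional level, and the standing hypothesis $\alpha<\tfrac12$ enters only to guarantee the availability of the product estimate \eqref{ineqn-B-fractional} (it is the range in which the relevant Leray--Helmholtz projection bound holds in the general-boundary setting of \cite{BCF_2013}).
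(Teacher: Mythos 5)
Your proposal is correct, and its skeleton coincides with the argument the paper actually runs (note that the paper does not reprove Proposition \ref{prop_BCF_2013-inftybis} here but quotes it from \cite{BCF_2013}; what it displays, in the proof of Proposition \ref{prop_BCF_2013-inftythree}, is explicitly an adaptation of that original proof, so that is the natural benchmark). Both arguments test \eqref{eqn-B05} against $\rA^{\alpha+1}z$, bound the nonlinearity via the fractional product estimate \eqref{ineqn-B-fractional}, pick a sequence $t_n\downarrow-\infty$ along which $\vert \rA z(t_n)\vert_{\rH}\to 0$ (available from the base case $\alpha=0$), and integrate from $t_n$. Where you genuinely diverge is in how the term $\lb \rB(z,z),\rA^{\alpha+1}z\rb$ is absorbed. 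The paper keeps the estimate in the form $\tfrac12\vert z\vert^2_{D(\rA^{1+\frac\alpha2})}+c\,\vert z\vert^2_{D(\rA^{\frac s2})}\vert z\vert^2_{D(\rA^{\frac{\alpha+1}2})}$ and closes with Gronwall, the integrating factor $\exp\bigl(c\int\vert z\vert^2_{D(\rA^{\frac s2})}\bigr)$ being finite by the $\alpha=0$ estimate \eqref{eqn-A10}; you instead interpolate both fractional norms between $\rV$ and $D(\rA^{1+\frac\alpha2})$ and apply Young's inequality (your check $\eta+\zeta=\frac{s-1+\alpha}{1+\alpha}<1$ for $s<2$ is the right condition), turning the nonlinearity into an $L^1$-in-time forcing $C\vert z\vert_{\rV}^p$ and dispensing with Gronwall altogether. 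The price of your route is the need for $\sup_{t\le 0}\vert z(t)\vert_{\rV}<\infty$ and $\int_{-\infty}^0\vert z\vert_{\rV}^2\,dt<\infty$, which you correctly extract from Proposition \ref{prop_BCF_2013-infty} and \eqref{cb4}; the benefit is a cleaner differential inequality $\frac{d}{dt}\vert z\vert^2_{D(\rA^{\frac{\alpha+1}2})}+\vert z\vert^2_{D(\rA^{1+\frac\alpha2})}\le g(t)$ with $g\in L^1(-\infty,0)$, from which \eqref{eqn-B04}, \eqref{eqn-A04'} and the first half of \eqref{eqn-A11} all drop out of a single integration. Your direct derivation of the $\rA^{\frac\alpha2}z^\prime$ bound from the equation is also a slight shortcut past the paper's appeal to maximal regularity, and your closing remark correctly identifies the role of the restriction $\alpha<\tfrac12$ as coming solely from the availability of the product estimate in the general-boundary setting. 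Two small points to make airtight when writing this up: justify the formal energy identity by the weighted/Galerkin regularization you mention (the paper asserts the absolute continuity of $\vert\rA^{\frac{\alpha+1}2}z(\cdot)\vert^2_{\rH}$ by reference to \cite{BCF_2013}), and note that the bound $\sup_{t\le 0}\vert z(t)\vert^2_{\rV}\le\varphi(\vert f\vert^2)$ needed for $\varphi_\alpha$ follows from \eqref{eqn-B06} applied to the time-shifts of $z$.
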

The reason for the restriction $\alpha \in\,(0,1/2)$ in Proposition \ref{prop_BCF_2013-inftybis} lies in the fact that we have used continuity of the Leray-Helmhotz projection $P$ from
$\rH^{\alpha}(\mathcal{O},\mathbb{R}^2)$ into  $ D(\rA^{\alpha/2})$, see Propossition 2.1 in \cite{BCF_2013} (and \eqref{eqn-Leray-Helmholtz} in the current paper).

The aim of this section is show that in the case of the 2-D NSEs with periodic boundary conditions, i.e. NSEs on a 2-dimensional torus, Proposition \ref{prop_BCF_2013-inftybis} holds true for any $\alpha>0$. Of course we will only need to consider the case $\alpha \geq \frac12 $. The main result in this section is as follows.

\begin{proposition} \label{prop_BCF_2013-inftythree}
Assume that $\alpha>0$. If $z$ satisfies  conditions \eqref{eqn-B01} and
\eqref{eqn-B03}, then it satisfies  \eqref{eqn-B04},
 \eqref{eqn-A04'}, and \eqref{eqn-A11}  as well.\\
Moreover, there exists a continuous and strictly increasing function $ \varphi_\alpha: [0,\infty) \to [0,\infty)$ such that
$ \varphi_\alpha(0)=0$ and if
$z $, satisfying condition \eqref{eqn-B01}, is a solution to  problem \eqref{eqn-B05}, with  $f \in L^2(-\infty,0;D(\rA^{\frac\a 2}))$, then inequality \eqref{eqn-B10} holds as well.
\end{proposition}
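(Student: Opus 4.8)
The plan is to prove the result by a finite bootstrap in the regularity exponent, using Proposition \ref{prop_BCF_2013-inftybis} as the base case and the torus nonlinearity estimates of Propositions \ref{prop-Leray-fractional-alpha} and \ref{prop-Leray-fractional-alpha2} as the engine. Since for $\alpha\in(0,\frac12)$ the statement is exactly Proposition \ref{prop_BCF_2013-inftybis}, I may assume $\alpha\geq\frac12$. First I would observe that, because $D(\rA^{\alpha/2})\embed D(\rA^{\beta_0/2})$ for any $\beta_0<\alpha$, hypothesis \eqref{eqn-B03} at exponent $\alpha$ implies the same at any smaller exponent; fixing some $\beta_0\in(0,\frac12)$, Proposition \ref{prop_BCF_2013-inftybis} (with $\alpha$ there replaced by $\beta_0$) yields \eqref{eqn-B04}, \eqref{eqn-A04'} and \eqref{eqn-A11} at level $\beta_0$, together with the quantitative bound \eqref{eqn-B10} for a continuous increasing $\varphi_{\beta_0}$ vanishing at $0$. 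By the Lions--Magenes trace embedding $W^{1,2}((-\infty,0);D(\rA^{\beta_0/2+1}),D(\rA^{\beta_0/2}))\embed C_0((-\infty,0];D(\rA^{(\beta_0+1)/2}))$ we also have $z\in L^\infty(-\infty,0;D(\rA^{(\beta_0+1)/2}))$, and the $L^\infty$- and $L^2$-in-time norms of $z$ and $z'$ used below are all controlled by $\varphi_{\beta_0}(\vert f\vert^2_{L^2(-\infty,0;D(\rA^{\alpha/2}))})$.

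The inductive step is as follows. Suppose the conclusion holds at some level $\beta\in[\beta_0,\alpha)$, so that $z\in L^\infty(-\infty,0;D(\rA^{(\beta+1)/2}))\cap L^2(-\infty,0;D(\rA^{\beta/2+1}))$ and $z'\in L^2(-\infty,0;D(\rA^{\beta/2}))$, with these norms dominated by $\varphi_\beta(\vert f\vert^2_{L^2(D(\rA^{\alpha/2}))})$. Set $\gamma:=\min(\alpha,\beta+1)>\beta$. I claim $\rB(z,z)\in L^2(-\infty,0;D(\rA^{\gamma/2}))$. Indeed, when $\gamma>1$ Proposition \ref{prop-Leray-fractional-alpha2} gives, pointwise in time, $\vert \rB(z,z)\vert_{D(\rA^{\gamma/2})}\leq c\,\vert z\vert_{D(\rA^{\gamma/2})}\,\vert z\vert_{D(\rA^{(\gamma+1)/2})}$; since $\gamma\leq\beta+1$ we have $\gamma/2\leq(\beta+1)/2$ and $(\gamma+1)/2\leq\beta/2+1$, so the first factor is bounded in $L^\infty$ in time and the second in $L^2$ in time. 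When $\gamma\leq1$ (which occurs only for $\alpha\in(\frac12,1]$, in a single step with $\gamma=\alpha$) Proposition \ref{prop-Leray-fractional-alpha} gives the same conclusion after choosing $s:=\beta_0+1\in(1,2]$, the factor $\vert z\vert_{D(\rA^{s/2})}$ again being an $L^\infty$-in-time quantity. In either case $\vert \rB(z,z)\vert_{L^2(D(\rA^{\gamma/2}))}\leq c\,\vert z\vert_{L^\infty(D(\rA^{(\beta+1)/2}))}\,\vert z\vert_{L^2(D(\rA^{\beta/2+1}))}$.

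Consequently the known function $g:=f-\rB(z,z)$ satisfies $g\in L^2(-\infty,0;D(\rA^{\gamma/2}))$ (using $\gamma\leq\alpha$, so that $f\in L^2(D(\rA^{\gamma/2}))$), and $z$ solves the linear Stokes equation $z'+\rA z=g$ on $(-\infty,0]$. Using the decay of $z$ in $\rH$ at $-\infty$ together with the exponential decay $\Vert\ex{-t\rA}\Vert_{\mathcal{L}(\rH)}\leq\ex{-\lambda_1 t}$ coming from $\lambda_1>0$, the variation-of-constants formula gives $z(t)=\int_{-\infty}^t\ex{-(t-s)\rA}g(s)\,ds$; the $L^2$-maximal regularity of the self-adjoint analytic semigroup $\{\ex{-t\rA}\}_{t\ge0}$ (spectral theorem, with the gap $\lambda_1>0$ ensuring integrability on the whole half-line) then yields $z\in L^2(-\infty,0;D(\rA^{\gamma/2+1}))$, $z'\in L^2(-\infty,0;D(\rA^{\gamma/2}))$, and by the trace embedding $z(0)\in D(\rA^{(\gamma+1)/2})$ with $\lim_{t\to-\infty}\vert z(t)\vert_{D(\rA^{(\gamma+1)/2})}=0$, all with $\vert z(0)\vert^2_{D(\rA^{(\gamma+1)/2})}+\vert z\vert^2_{L^2(D(\rA^{\gamma/2+1}))}+\vert z'\vert^2_{L^2(D(\rA^{\gamma/2}))}\leq C\vert g\vert^2_{L^2(D(\rA^{\gamma/2}))}$. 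Combining this with the nonlinear estimate and the induction hypothesis bounds the left-hand side by $\varphi_\gamma(\vert f\vert^2_{L^2(D(\rA^{\alpha/2}))})$, where $\varphi_\gamma(r):=2Cr+2Cc^2\,[\varphi_\beta(r)]^2$ is continuous, increasing and vanishes at $r=0$. This closes the induction at level $\gamma$.

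Since $\gamma=\min(\alpha,\beta+1)$ increases by a full unit at each step (until capped at $\alpha$), the sequence $\beta_0<\beta_1<\cdots$ reaches $\alpha$ after finitely many steps, at which point \eqref{eqn-B04}, \eqref{eqn-A04'}, \eqref{eqn-A11} and \eqref{eqn-B10} (with $\varphi_\alpha$ the final composite) are all established. The main obstacle I anticipate is the linear half-line regularity step: one must justify the variation-of-constants representation on $(-\infty,0]$ and the $L^2$-maximal regularity at arbitrary fractional order, identify the trace space of $z(0)$ as exactly $D(\rA^{(\gamma+1)/2})$, and propagate the decay at $-\infty$ in the stronger norm. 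The remaining bookkeeping---checking at every step that the inclusions $\gamma/2\leq(\beta+1)/2$ and $(\gamma+1)/2\leq\beta/2+1$ hold so that the two nonlinear factors sit in $L^\infty$ and $L^2$ in time respectively, and handling the boundary case $\gamma=1$ and the switch between Propositions \ref{prop-Leray-fractional-alpha} and \ref{prop-Leray-fractional-alpha2}---is routine.
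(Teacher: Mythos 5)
Your argument is correct, but it reaches the conclusion by a genuinely different route from the one in the paper. The paper's proof is a nonlinear energy bootstrap: at each level it differentiates $t\mapsto\vert \rA^{\frac{\alpha+1}2}z(t)\vert_{\rH}^2$ (cf.\ \eqref{eqn-NSE03c} and \eqref{eqn-NSE03d}), absorbs the nonlinear term via \eqref{ineqn-B-fractional} or \eqref{ineqn-B-fractional2}, and runs a Gronwall argument started from a sequence $s_n\downarrow-\infty$ along which the lower-order norm vanishes; maximal regularity of the Stokes semigroup is invoked only at the very end, to control $\rA^{\frac\alpha2}z^\prime$. You instead make the induction step entirely linear: once $\rB(z,z)$ is placed in $L^2(-\infty,0;D(\rA^{\frac\gamma2}))$ by the same two nonlinearity estimates, you treat $z^\prime+\rA z=f-\rB(z,z)$ as a linear Stokes equation on the half-line and extract the full gain (spatial regularity, the $z^\prime$ bound, the trace $z(0)\in D(\rA^{\frac{\gamma+1}2})$, and the decay at $-\infty$) from half-line maximal $L^2$-regularity plus the Lions--Magenes trace theorem. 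Your choice of $s=\beta_0+1$ in Proposition \ref{prop-Leray-fractional-alpha} for the single step with $\gamma\le 1$, and the verification that $\gamma\le\beta+1$ places the two factors of $\rB(z,z)$ in $L^\infty$ and $L^2$ in time respectively, are exactly right, and the step size of one unit matches the paper's induction on the integer part of $\alpha$. What each approach buys: the paper's energy method is self-contained and produces the function $\varphi_\alpha$ explicitly through exponentials of lower-order quantities, at the cost of a more delicate Gronwall argument at every level; yours outsources the analytic work to two standard linear facts (for the self-adjoint positive $\rA$ with spectral gap $\lambda_1>0$ these follow from Plancherel, since $\Vert\rA(i\tau+\rA)^{-1}\Vert\le 1$), which makes the induction step uniform and short, but you should record the justification of the representation $z(t)=\int_{-\infty}^t\ex{-(t-r)\rA}\big(f(r)-\rB(z(r),z(r))\big)\,dr$ (letting $s\to-\infty$ in the variation-of-constants formula, using $\Vert \ex{-t\rA}\Vert_{\mathcal{L}(\rH)}\le\ex{-\lambda_1 t}$ and \eqref{eqn-B01}) and the half-line trace estimate that converts the finiteness of the tail integrals into $\lim_{t\to-\infty}\vert z(t)\vert_{D(\rA^{\frac{\gamma+1}2})}=0$, since these are the only points where your proof is not a one-line citation.
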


The following proof is an adaptation of the proof of
Proposition 2.2 from \cite{BCF_2013}. In fact, we follow the lines quite literary. As mentioned earlier, we only need to consider the case $\a \in\,[1/2,\infty)$.  Since  the estimates for the nonlinear term $B$ given in Propositions \ref{prop-Leray-fractional-alpha} and \ref{prop-Leray-fractional-alpha2}  are different for $\alpha \leq 1$ and $\alpha >1$ we will have to consider two cases: $\a \in\,[1/2,1]$ and $\a \in\,(1, \infty)$.  If $\a \in\,[1/2,1]$ then  we can  use inequality \eqref{ineqn-B-fractional} with $s=2$. In this case the proof from \cite{BCF_2013} is virtually the same. Note however, that if $\alpha>1$ and inequality \eqref{ineqn-B-fractional2} holds, this does not imply that inequality \eqref{ineqn-B-fractional} with $s=2$ holds.

\begin{proof}[Proof of Proposition \ref{prop_BCF_2013-inftythree}]
In the whole proof all the norms and scalar products are in $\rH$.

For the readers convenience and the completeness of the results we will  prove our result in the special  case

\[ \alpha=1\]

So, let us fix  $\phi \in\,D(\rA)$ and a function $z$ satisfying conditions   \eqref{eqn-B01} and \eqref{eqn-B03}. Following the methods from the proof of Proposition 2.1 from \cite{BCF_2013}  it is sufficient to prove \eqref{eqn-B04},
 \eqref{eqn-A04'}, and \eqref{eqn-A11}.

Since the function $z$ satisfies inequality \eqref{eqn-B06},   we can find a decreasing sequence $\{s_n\}$ such that $s_n\searrow -\infty$,  $z(s_n) \in D(\rA)$, $n\in \mathbb{N}$ and \comad{Here we take $\alpha=1$ so that $\frac{\alpha+1}2=1$.}
\begin{equation}\label{eqn-NSE02''}
\lim_{n\to\infty} \vert \rA z(s_n)\vert_\rH=0.
\end{equation}
Arguing as in the proof of  \cite[Proposition 3.3]{BCF_2013}, 
we infer that the function  $\vert \rA z(\cdot)\vert_{\rH}^2 $ is absolutely continuous and satisfies the following identity on $(-\infty,0]$
\begin{equation}\label{eqn-NSE03c}
\frac12 \frac{d}{dt}\vert \rA z(t) \vert_\rH^2+\vert \rA^{\frac{3}2} z(t) \vert_\rH^2=-(B(z(t),z(t)),\rA^{2}z(t))_\rH+(f(t),\rA^2 z(t))_\rH.
\end{equation}
 In view of  inequality \eqref{ineqn-B-fractional}, with $s=2$ and $\alpha =1$,  we  infer that there exists $c>0$ such
 the following
inequality is satisfied for $f\in D(\rA^{\frac{1}2}) $
\begin{eqnarray*}
-(B(z,z),\rA^{2}z)+(\rA f,\rA^{2} z)&=& -(\rA^{\frac12}B(z,z),\rA^{\frac32}z)+(\rA^{\frac12}f,\rA^{\frac32} z)\\
 &\leq& \frac12 \vert \rA^{\frac{3}2} z \vert^2+c\,\vert \rA z \vert^4+  \vert \rA^{\frac{1}2} f \vert^2, \;\;z\in D(\rA).
\end{eqnarray*}

Hence, we infer that on $(-\infty,0]$, we have
\begin{equation}\label{eqn-NSE04'''}
 \frac{d}{dt}\vert \rA z(t) \vert_\rH^2 +\vert \rA^{\frac{3}2} z(t) \vert_\rH^2
\leq c\,\vert \rA z(t) \vert_\rH^2 \vert \rA z(t) \vert_\rH^2  +2\vert \rA^{\frac{1}2} f(t) \vert_\rH^2.
\end{equation}

Therefore,   by the Gronwall Lemma,  we get
\begin{eqnarray}\label{eqn-NSE05''}
\vert \rA z(t)\vert_\rH^2
&\leq&  \vert \rA z(s)\vert_\rH^2 \ex{c \int_s^t \vert\rA z(r)\vert_\rH^2\,dr}\\
&+&2\int_s^t \vert \rA^{\frac{1}2} f(r)\vert_\rH^2 \ex{c\int_r^t \vert \rA z(\rho)\vert_\rH^2\, d\rho}\, dr,\;\; -\infty < s\leq t\leq 0.
\nonumber
\end{eqnarray}
Note that  by inequality \eqref{eqn-B06} 
we have
\[
\sup_{n\geq 1}\int_{s_n}^t \vert \rA z(r)\vert_\rH^2\, dr
\leq \varphi(\vert f\vert^2)
\delc{\frac{2\,C_2}{\lambda_1^2} \ex{\frac {C_2}{\lambda_1^2} \vert f\vert^4} \vert f\vert^6+2 \vert f\vert^2 <\infty} ,\;\;\; t \leq 0.
\]
where  for the sake of brevity, we set $\vert f\vert_{L^2(-\infty,0;\rH)}=\vert f\vert$.

Hence, using  inequality \eqref{eqn-NSE05''} with $s=s_n$ from \eqref{eqn-NSE02''} and then taking the limit as $n\to\infty$,  we infer that
\begin{equation}\label{eqn-NSE06''}
\vert \rA z(t)\vert_\rH^2
\leq   2\int_{-\infty}^t \vert \rA^{\frac{1}2} f(r)\vert_\rH^2 \ex{c\int_r^t\vert \rA z(\rho)\vert_\rH^2\, d\rho}\, dr,\ \ \  t\leq 0.
\end{equation}
 Therefore,
 we  conclude that
\begin{equation}\label{eqn-A16'}
\sup_{t\leq 0} \vert \rA z(t)\vert^2 \leq
2 \ex{C  \varphi(\vert f\vert^2}) \int_{-\infty}^0 \vert \rA^{\frac12} f(r)\vert_\rH^2\,dr  .
\end{equation}
Moreover, by assumption \eqref{eqn-B03}, definition  \eqref{eqn-B05} of the function $f$ and inequality \eqref{eqn-B06} we have
\[\int_{-\infty}^0 \vert \rA^{\frac{1}2} f(r)\vert^2 \ex{c\int_r^0 \vert \rA z(\rho)\vert_\rH^2\, d\rho}\, dr
\leq  \ex{c\int_{-\infty}^0 \vert \rA z(\rho)\vert_\rH^2\, d\rho} \int_{-\infty}^0 \vert \rA^{\frac{1}2} f(r)\vert^2
\, dr
<\infty,\]
we infer that
\[\lim_{t\to-\infty} \int_{-\infty}^t \vert \rA^{\frac{1}2} f(r)\vert_\rH^2 \exp\ex{c\int_r^t\vert \rA z(\rho)\vert_\rH^2\, d\rho}\, dr=0.\]
Hence, due to \eqref{eqn-NSE06''}, we have that  \eqref{eqn-A04'} holds for $\alpha=1$, i.e.
\begin{equation*}
\lim_{t\to-\infty} \vert z(t)\vert_{D(\rA)}^2=0.
\end{equation*}

Now, let us prove the first one of inequalities  \eqref{eqn-A11}, with $\alpha=1$.
For this aim, let us   observe that from \eqref{eqn-NSE04'''} we  deduce that
\begin{eqnarray*}
\vert \rA z(0)\vert_\rH^2&+&\int_{-\infty}^0 \vert \rA^{\frac{3}2} z(t)\vert_\rH^2\, dt
\leq c\int_{-\infty}^0  \vert \rA z(t)\vert_\rH^4\,dt
+2\int_{-\infty}^0\vert \rA^{\frac{1}2} f(t)\vert_\rH^2 \,dt
\\&\leq&
c\sup_{t \leq 0} \vert \rA z(t)\vert_\rH^2 \int_{-\infty}^0   \vert \rA z(t)\vert_\rH^2\,dt
+2\int_{-\infty}^0\vert \rA^{\frac{1}2} f(t)\vert_\rH^2 \,dt.
\end{eqnarray*}

Taking into account inequalities \eqref{eqn-A16'} and   \eqref{eqn-B06}, 
we infer that
\begin{eqnarray}\label{eqn-NSE08'}
\vert \rA z(0)\vert_\rH^2&+&\int_{-\infty}^0 \vert \rA^{\frac{3}2} z(t)\vert_\rH^2\, dt
\leq 2\int_{-\infty}^0\vert \rA^{\frac{1}2} f(t)\vert_\rH^2 \,dt \\ &+&
2c \varphi(\vert f\vert^2)
 \ex{c  \varphi(\vert f\vert^2)}  \int_{-\infty}^0 \vert \rA^{\frac{1}2} f(r)\vert_\rH^2\,dr,
\nonumber
\end{eqnarray}
and this  concludes the proof of the first part of inequalities  \eqref{eqn-A11}
.

In order to prove the second  of inequalities \eqref{eqn-A11} (with $\alpha=1$), i.e.
\[
\int_{-\infty}^0 \vert  \rA^{\frac 12}  z^\prime(t)\vert_\rH^2\, dt<\infty\]
by the maximal regularity of the linear Stokes problem, it is enough to show that
\[ \int_{-\infty}^0 \vert \rA^{\frac{1}2}   B(z(t),z(t))\vert_\rH^2\, dt < \infty.\]

According to inequality \eqref{ineqn-B-fractional} (with $s=2$ and $\alpha=1$), we get, similarly to  \eqref{eqn-NSE08'}, the following estimate
\begin{eqnarray}\label{eqn-B20}
\int_{-\infty}^0 \vert \rA^{\frac{1}2}   B(z(t),z(t))\vert_\rH^2\, dt &\leq&  C  \int_{-\infty}^0 \vert \rA z(t) \vert^4_\rH
  \, dt\\
 &\leq& 2c \varphi(\vert f\vert^2)
 \ex{c  \varphi(\vert f\vert^2)}  \int_{-\infty}^0 \vert \rA^{\frac{1}2} f(r)\vert_\rH^2\,dr .
\nonumber
\end{eqnarray}
The proof, for $\alpha=1$, is now complete.

The case $\alpha >1$ has to be  treated  very carefully. To this purpose, we first consider the case $\alpha \in (1,2]$.
We fix  $\phi \in\,D(\rA^{\frac{\alpha+1}2})$ and a function $z \in\,C((-\infty,0];\rH)$, such that $z(0)=\phi$, satisfying conditions \eqref{eqn-B01} and \eqref{eqn-B03}, i.e.
\[ \lim_{t\to-\infty}\vert z(t)\vert_{\rH}=0,\]
and
\[ \int_{-\infty}^0 |f(t)|^2_{D(\rA^{\frac\a 2})}dt:=\int_{-\infty}^0\vert  z^\prime(t)+\rA z(t)+\rB(z(t),z(t)) \,\vert_{D(\rA^{\frac\a 2})}^2\, dt <\infty.\]
  Since the assumptions in the present proposition
 are stronger than the assumptions of Propositions \ref{prop_BCF_2013-infty} and \ref{prop_BCF_2013-inftybis}, we can freely use the results from their proofs, see \cite{BCF_2013}.

As before, it is sufficient to prove that $z$ satisfies conditions \eqref{eqn-B04},
 \eqref{eqn-A04'}, and \eqref{eqn-A11}.
We notice that, due to inequality  \eqref{eqn-A11} with $\alpha=1$, 
we can find a decreasing sequence $\{s_n\}$ such that $s_n\todown -\infty$ and
\begin{equation}\label{eqn-NSE02''d}
\lim_{n\to\infty} \vert \rA^{\frac{3}2} z(s_n)\vert_\rH=0.
\end{equation}
Hence, as $\alpha \leq 2$, we get \comcd{For the below it is sufficient to assume that $\alpha \leq 2$.}
\[
\lim_{n\to\infty} \vert \rA^{\frac{\alpha+1}2} z(s_n)\vert_\rH=0.
\]
  Therefore we can deduce that the function   $\vert \rA^{\frac{\a+1}2} u(t)\vert_{\rH}^2 $  is absolutely continuous and satisfies the following identity on $(-\infty,0]$
\begin{equation}
\label{eqn-NSE03d}
\frac12 \frac{d}{dt}\vert \rA^{\frac{\alpha+1}2} z(t) \vert_{\rH}^2+\vert \rA^{\frac{\a}2+1} z(t) \vert_{\rH}^2=-(B(z(t),z(t)),\rA^{{\a}+1}z(t))+(f(t),\rA^{\a+1} z(t))
\end{equation}
By inequality \eqref{ineqn-B-fractional2}, since $\frac\alpha2 \leq \frac32$, we infer that
\[\begin{array}{l}
\ds{-(B(z,z),\rA^{{\a}+1}z)=(\rA^{\frac\alpha2}B(z,z),\rA^{1+\frac\alpha2}z)
\leq c \vert \rA^{\frac\alpha2}z \vert_{\rH}  \vert \rA^{\frac{\alpha+1}2}z \vert_{\rH}
\vert \rA^{1+\frac\alpha2}z \vert_{\rH}}\\
\vs\\
\ds{\leq  \frac14 \vert \rA^{1+\frac\alpha2}z \vert_{\rH}^2 +c \vert \rA^{\frac\alpha2}z \vert_{\rH}^2  \vert \rA^{\frac{\alpha+1}2}z \vert_{\rH}^2\leq  \frac14 \vert \rA^{1+\frac\alpha2}z \vert_{\rH}^2 +c \vert \rA^{\frac32}z \vert_{\rH}^2  \vert \rA^{\frac{\alpha+1}2}z \vert_{\rH}^2.}
\end{array}\]

Due to \eqref{eqn-NSE03d}, this implies that
\[ \frac{d}{dt}\vert \rA^{\frac{\alpha+1}2} z(t) \vert_{\rH}^2 +\vert \rA^{\frac{\a}2+1} z(t) \vert_\rH^2
\leq c\vert \rA^{\frac32} z(t) \vert_\rH^2 \vert \rA^{\frac{\alpha+1}2} z(t) \vert_\rH^2  +2\vert \rA^{\frac{\a}2} f(t) \vert_\rH^2.
\]
Therefore,   by the Gronwall Lemma, for any $-\infty < s\leq t\leq 0$ we get
\begin{eqnarray}\label{eqn-NSE05''d}
\vert \rA^{\frac{\alpha+1}2}z(t)\vert_\rH^2
&\leq&  \vert \rA^{\frac{\alpha+1}2}z(s)\vert_\rH^2 \exp\left(c \int_s^t \vert\rA^{\frac32} z(r)\vert_\rH^2\,dr\right)\\
&+&2\int_s^t \vert \rA^{\frac{\a}2} f(r)\vert_\rH^2 \exp\left(c\int_r^t \vert \rA^{\frac32} z(\rho)\vert_\rH^2\, d\rho\right)\, dr.
\nonumber
\end{eqnarray}
Note that  by inequality  \eqref{eqn-B11} with $\alpha=1$
we have
\[
\sup_{n\geq 1}\int_{s_n}^t \vert \rA^{\frac32} z(r)\vert_\rH^2\, dr  \leq
\varphi_1(\vert f\vert_{\frac 12}^2 ),
\]
where
 we use notation shortcut  $\vert f\vert_{\alpha/2}=\vert f\vert_{L^2(-\infty,0);D(\rA^{\frac\a 2})}$.
Hence, using  inequality \eqref{eqn-NSE05''d} with $s=s_n$ from \eqref{eqn-NSE02''d} and then taking the limit as $n\to\infty$,  we infer that

\begin{equation}\label{eqn-NSE06''d}
\vert \rA^{\frac{\alpha+1}2}z(t)\vert_\rH^2
\leq   2\int_{-\infty}^t \vert \rA^{\frac{\a}2} f(r)\vert_\rH^2 \ex{c\int_r^t\vert \rA^{\frac32} z(\rho)\vert_\rH^2\, d\rho}\, dr,\ \ \  t\leq 0,
\end{equation}
so that
\begin{equation}\label{eqn-A16'd}
\sup_{t\leq 0} \vert \rA^{\frac{\alpha+1}2} z(t)\vert \leq
2\int_{-\infty}^0 \vert \rA^{\frac{\a}2} f(r)\vert_\rH^2\,dr \ex{ \varphi_1(\vert f\vert_{\frac 12}^2 )}=2 \vert f \vert_{\frac\alpha2}^2 \ex{ \varphi_1(\vert f\vert_{\frac 12}^2 )}.
\end{equation}
Moreover, as
\[\int_{-\infty}^0 \vert \rA^{\frac{\a}2} f(r)\vert_\rH^2 \ex{c\int_r^t \vert \rA^{\frac32} z(\rho)\vert_\rH^2\, d\rho}\, dr<\infty,\]
we have that
\[\lim_{t\to-\infty} \int_{-\infty}^t \vert \rA^{\frac{\a}2} f(r)\vert_\rH^2 \ex{c\int_r^t\vert \rA^{\frac32} z(\rho)\vert_\rH^2\, d\rho}\, dr=0,\]
and  \eqref{eqn-A04'}  follows from \eqref{eqn-NSE06''d}.


Next, we observe that, due to  \eqref{eqn-NSE04'''},

\[\begin{array}{l}
\ds{\vert \rA^{\frac{\a+1}2} z(0)\vert_\rH^2+\int_{-\infty}^0 \vert \rA^{\frac{\a+2}2} z(t)\vert_\rH^2\, dt
\leq c\int_{-\infty}^0  \vert \rA^{\frac32} z(t)\vert^2 \vert \rA^{\frac{\a+1}2} z(t)\vert_\rH^2\,dt}\\
\vs\\
\ds{
+2\int_{-\infty}^0\vert \rA^{\frac{\a}2} f(t)\vert_\rH^2 \,dt
\leq c \sup_{t\leq 0} \vert \rA^{\frac{\a+1}2} z(t)\vert_\rH^2\int_{-\infty}^0  \vert \rA^{\frac32} z(t)\vert_\rH^2 \,dt
+2 \vert f \vert_{\frac\alpha2}^2.}
\end{array}\]
Taking into account inequalities \eqref{eqn-A16'd} and  \eqref{eqn-A11} with $\alpha=1$,
we infer that
\begin{eqnarray}\label{eqn-NSE08'd}
\vert \rA^{\frac{\a+1}2} z(0)\vert_\rH^2+\int_{-\infty}^0 \vert \rA^{\frac{\a+2}2} z(t)\vert_\rH^2\, dt
&\leq& 2 \,\vert f \vert_{\frac\alpha2}^2
\\
&+& c \vert f \vert_{\frac\alpha2}^2 \ex{ \varphi_1(\vert f\vert_{\frac 12}^2 )} \varphi_{1}(\vert f \vert_{\frac12}^2)
\nonumber
\end{eqnarray}
and this  concludes the proof of the first part of inequality \eqref{eqn-A11}.

Invoking the maximal regularity of the Stokes evolution equation, in order to prove  the second inequality in \eqref{eqn-A11},
it is enough to show that
\[ \int_{-\infty}^0 \vert \rA^{\frac{\a}2}   B(z(t),z(t))\vert_\rH^2\, dt < \infty.\]

According to inequalities \eqref{ineqn-B-fractional2},
\eqref{eqn-A16'd} and  \eqref{eqn-A11} with $\alpha=1$,
we have

\begin{eqnarray}
\int_{-\infty}^0 \vert \rA^{\frac{\a}2}   B(z(t),z(t))\vert_\rH^2\, dt &\leq & c  \int_{-\infty}^0 \vert \rA^{\frac{\a}2} z(t) \vert^2_\rH
\vert \rA^{\frac{\a+1}2} z(t) \vert_\rH^2  \, dt\\
&\leq & c \sup_{t\leq 0}  \vert \rA^{\frac{\a+1}2} z(t) \vert^2_\rH
\int_{-\infty}^0  \vert \rA^{\frac32} z(t) \vert_\rH^2 \, dt
\nonumber\\
&\leq & \vert f \vert_{\frac\alpha2}^2 \ex{ \varphi(\vert f\vert_{\alpha/2}^2 )} \varphi_{1}(\vert f \vert_{\frac12}^2).
\nonumber
\end{eqnarray}
The proof in the case $\alpha \in (1,2]$ is now complete. A simple extension of the last argument and mathematical induction with respect to the integer part of $\alpha$ can provide a complete proof for all $\alpha \geq 0$.

\end{proof}

\end{document}